\definecolor{darkgreen}{cmyk}{1,0,1,.2}
\definecolor{m}{rgb}{1,0.1,1}
\newdimen\theight
\def\TeXref#1{%
             \leavevmode\vadjust{\setbox0=\hbox{{\tt
                     \quad\quad  {\small \textrm #1}}}%
             \theight=\ht0
             \advance\theight by \lineskip
             \kern -\theight \vbox to
             \theight{\rightline{\rlap{\box0}}%
             \vss}%
             }}%
\newcommand{\R}{\mathbf{R}}
\newcommand{\Q}{\mathbf{Q}}
\newcommand{\N}{\mathbf{N}}
\newcommand{\Z}{\mathbf{Z}}
\newcommand{\2}{\mathbf{2}}
\DeclareMathOperator{\diam}{diam}
\DeclareMathOperator{\Int}{Int}
\DeclareMathOperator{\op}{op}
\DeclareMathOperator{\Isom}{Isom}
\DeclareMathOperator{\supp}{supp}
\theoremstyle{plain}
\newtheorem{theorem}{Theorem}[section]
\newtheorem{lemma}[theorem]{Lemma}
\newtheorem{cor}[theorem]{Corollary}
\newtheorem{prop}[theorem]{Proposition}
\theoremstyle{definition}
\newtheorem{defn}[theorem]{Definition}
\newtheorem{example}[theorem]{Example}
\theoremstyle{remark}
\newtheorem{rem}{Remark}
\newtheorem{claim}{Claim}
\newtheorem{hyp}{Hypothesis}
\newcommand{\BB}{\mathcal{B}}
\newcommand{\CC}{\mathcal{C}}
\newcommand{\DD}{\mathcal{D}}
\newcommand{\EE}{\mathcal{E}}
\newcommand{\MM}{\mathcal{M}}
\newcommand{\OO}{\mathcal{O}}
\newcommand{\QQ}{\mathcal{Q}}
\newcommand{\VV}{\mathcal{V}}
\begin{document}

\title{On turbulent relations}

\author[J.A. \'Alvarez L\'opez]{Jes\'us A. \'Alvarez L\'opez}
\address{Departamento de Xeometr\'{\i}a e Topolox\'{\i}a\\
         Facultade de Matem\'aticas\\
         Universidade de Santiago de Compostela\\
         15782 Santiago de Compostela\\ Spain}
\email{jesus.alvarez@usc.es}

\author[A. Candel]{Alberto Candel} 
\address{Department of Mathematics\\ 
	California State University at Northridge\\ 
	18111 Nordhoff Street\\
	Northridge, CA 91330\\ U.S.A.}
\email{alberto.candel@csun.edu}

\thanks{Research of authors supported by Spanish Ministerio de Ciencia e Innovaci\'on grants MTM2011-25656 and MTM2008-02640}
\date{}
\subjclass{03E15, 54H05, 54H20, 54E50}
\keywords{Turbulence, generic ergodicity, classification, countable models, uniform equivalence relation, metric equivalence relation, Gromov space, Gromov-Hausdorff metric, quasi-isometry}

\maketitle

\begin{abstract}
  This paper extends the theory of turbulence of Hjorth to certain
  classes of equivalence relations that cannot be induced by Polish
  actions. It applies this theory to analyze the quasi-isometry
  relation and finite Gromov-Hausdorff distance relation in the space
  of isometry classes of pointed proper metric spaces, called the
  Gromov space.
\end{abstract}

\tableofcontents


\section{Introduction}\label{s:intro}

Gromov~\cite[Chapter~3]{Gromov1999},~\cite{Gromov1981} described a
space, which is called the Gromov space and denoted here by
\(\MM_*\), whose points are isometry classes of pointed, complete,
proper metric spaces, and which is endowed with a topology which
resembles the compact-open topology on the space of continuous
functions on \(\R\). The space \(\MM_*\) supports several equivalence
relations of geometric interest. For example, the relation of being
(coarsely) quasi-isometric, the relation of being at finite
Gromov-Hausdorff distance, the relation of being bi-Lipschitz
equivalent, and others.  Their dynamic complexity was reminiscent of
the complexity exhibited by the turbulent group actions of
Hjorth~\cite{Hjorth2000}, and this motivated the development of the
theory of turbulent relations carried out in this paper.

A section by section description of the contents of this paper now
follows.  In Section~\ref{s:cont rel} we analyze a topology on the
space of subsets of a space appropriate for working with equivalence
relations. This topology is essentially the Vietoris
topology~\cite{McAllister1978} but the properties that we need are not
found on the literature on the topic. These topological properties are
of a categorical nature, and are needed to obtain a new version
(Theorem~\ref{t:K-U}) of the Kuratowski-Ulam
theorem~\cite[p.~222]{Kuratowski1958} which describes how topological
properties of a subset of a space over which an equivalence relation
is defined translate to properties of the intersection of that set
with the orbits of the equivalence relation (indeed, our version of
the Kuratowski-Ulam theorem also applies to non-equivalence
relations). The Kuratowski-Ulam theorem is one of key tools for
studying generic ergodicity of one relation with respect to another.

In Section~\ref{s:classification} we briefly review the basic concepts
of classification of equivalence relations. Complexity of an
equivalence relation is quantified by comparing that relation with one
of the standard examples, like the identity relation over a space or
the relation ``being on the same orbit'' of a group action, for
instance.  Two concepts used for describing the relative complexity of
two equivalence relations, \(E\) over \(X\) and \(F\) over \(Y\), are
reducibility and generic ergodicity.  The relation \(E\) is Borel
reducible to \(F\), denoted by \(E\leq_BF\), if there is an
\((E,F)\)-invariant Borel mapping \(\theta : X\to Y\) (that is,
\(\theta\) takes equivalence classes of \(E\) into equivalence classes
of \(F\)) such that the mapping \(\bar\theta:X/E\to Y/F\) induced by
\(\theta\) between quotient spaces is injective. The relation \(E\) is
generically \(F\)-ergodic if for any \((E,F)\)-invariant Borel mapping
\(\theta: X\to Y\) there is a residual saturated subset \(C\subseteq X\)
such that the mapping \(\bar\theta:C/E\to Y/F\)  is constant. These
  notions were mainly studied for the orbit relation \(E_G^X\) (or
  simply \(E_G\)) of any action of a Polish group \(G\) on a Polish
  space \(X\) (a Polish action \(G\curvearrowright
  X\)).

The least complex equivalence relations, called smooth or concretely
classifiable, are those Borel reducible to the identity relation over a
standard Borel space.  For example, the equivalence relation of being
isometric in the set of compact metric spaces is smooth because the
space of equivalence classes of this relation is itself a Polish
metric space when endowed with the Gromov-Hausdorff metric.

At a higher level of complexity are the equivalence relations that are
classifiable by isomorphism classes of countable structures. A
countable structure is a structure on the natural numbers that is
determined by  countable many  relations. This set of countable
structures is endowed with a Polish topology, and carries a continuous
action of \(S_\infty\), the Polish group of permutations of the
natural numbers, so that two countable structures are isomorphic if
and only if they are in the same orbit of this \(S_\infty\)-action.
Thus, an equivalence relation over a Borel space is classifiable by
countable structures if it is Borel reducible to the relation given by
the action of \(S_\infty\) on the space of countable structures. A
variety of examples of equivalence relations that are classifiable by
countable structures and which arise in dynamical systems are given in
Kechris~\cite{Kechris2000}, Hjorth~\cite[Preface]{Hjorth2000}.

We can also consider the class of equivalence relations that are
  generically \(E_{S_\infty}^Y\)-ergodic for every Polish
  \(S_\infty\)-space \(Y\).  In particular, these equivalence
  relations are not classifiable by isomorphism classes of countable
  structures: roughly speaking, any attempt of classification of these
  relations by countable models becomes generically trivial.

A key concept in the analysis of the complexity of Polish group
actions (classification by countable structures and generic
ergodicity) is that of turbulence, introduced by
Hjorth~\cite{Hjorth2000}. For a Polish group action to be turbulent,
not only the action must be highly complex (transitive, minimal) but
the group itself must be highly complex (actions of locally compact
groups are not turbulent). Precisely, the action is turbulent when its
orbits are dense and meager, and its local orbits are somewhere dense,
where the local orbits are the orbits of any restriction of the given
action to a local action of an open identity neighborhood in the group
on an open subset of the space. If a Polish action
  \(G\curvearrowright X\) is turbulent, then \(E_G^X\) is generically
  \(E_{S_\infty}^Y\)-ergodic for any Polish \(S_\infty\)-space \(Y\)
  \cite[Theorem~3.18]{Hjorth2000}; in particular, \(E_G^X\) is not
  classifiable by isomorphism classes of countable
  structures. Moreover, assuming that \(E_G^X\) is Borel in \(X\times
  X\) for a Polish action \(G\curvearrowright X\), then \(E_G^X\) is
  not classifiable by isomorphism classes of countable structures if
  and only if \(X\) has a continuously \(G\)-embedded turbulent
  Polish \(G\)-space \cite{Hjorth2002}.

The relations of being at finite Gromov-Hausdorff distance and being
quasi-isometric in the Gromov space \(\MM_*\) are not reducible to an
equivalence relation given by a Polish group action
\cite{AlvarezCandel:Non-reduction}. In particular, these equivalence
relations are not classifiable by isomorphism classes of countable
structures. However it makes sense to study whether they are
generically \(E_{S_\infty}^Y\)-ergodic for any Polish
\(S_\infty\)-space \(Y\), which could be done by using some
appropriate version of turbulence. Therefore, the theory of turbulence
for group actions needs to be amplified to a theory of turbulence for
more general equivalence relations. This amplification is
carried out in this paper for a class of uniform equivalence
relations, which includes interesting examples like the above
  metric equivalence relations on the Gromov space.

A uniform equivalence relation is a pair, \((\VV,E)\), consisting of a
uniformity \(\VV\) with a distinguished entourage \(E\) which is an
equivalence relation.  A first example of uniform equivalence relation
arises from a Polish action \(G\curvearrowright X\). The
uniformity on \(X\) is generated by the entourages \(\{\, (x,gx)\mid
x\in X, g\in W\,\}\), where \(\{W\}\) is a neighborhood system of the
identity of \(G\), and the equivalence relation is \(E_G^X\). 
A second example arises from a distance-like mapping,
\(d:X\times X\to [0,\infty]\), that satisfies the standard properties
of a distance but it is allowed to have \(d(x,y)=\infty\) for some
\(x,y\in X\).  The uniformity is generated by the entourages
\(\{\, (x,x')\mid d(x,x')<\epsilon\,\}\), for \(\epsilon>0\), and the
equivalence relation \(E_d\) is given by \(x E_d y\) if and only if
\(d(x,y)<\infty\). The pair \((d, E_d)\) (or simply \(d\)) is called a
metric equivalence relation.

Generalizing the case of Polish actions, a uniform equivalence
relation \((\VV,E)\) on a space \(X\) is called turbulent when the
equivalence classes of \(E\) are dense and meager, and its local
equivalence classes are somewhere dense, where the local equivalence
classes are the equivalence classes of the equivalence relation on any
open subset \(U\subseteq X\) generated by \((U\times U)\cap V\) for any
entourage \(V\) of \(\VV\).

As said, the main goal of this paper is to develop the theory of
turbulence for a class of uniform equivalence relations and then
use it to analyze the complexity of several metric equivalence
relations in the Gromov space, which are not reducible to Polish
actions, proving that they are turbulent and, as a consequence,
generically \(E_{S_\infty}^Y\)-ergodic for any Polish
\(S_\infty\)-space \(Y\).  This analysis begins in
  Section~\ref{s:Turbulence and generic ergodicity}, where we
  introduce a class of metric equivalence relations, called of
  type~I. For any metric equivalence relations of type~I and any
  Polish \(S_\infty\)-space \(Y\), we show that turbulence implies
  generic \(E_{S_\infty}^Y\)-ergodicity. The results and proofs of
  Section~\ref{s:Turbulence and generic ergodicity} follow closely
  Hjorth's work, adapted to metric equivalence relations by using the
  concepts and preliminary results developed in the previous sections.
  The general theory is continued in Section~\ref{s:U_R,r}, where we
  give a sequence of hypothesis that collective-wise will eventually
  guarantee that a metric equivalence relation that satisfies them is
  of type~I and turbulent.

In Section~\ref{s:supremum}, as a prelude to the study of the
``turbulent dynamics'' of the Gromov space, we study the metric
equivalence relation \((d_\infty,E_\infty)\) on \(C(\R)\) defined by the
supremum distance, where \(C(\R)\) is equipped with the compact-open
topology.

Section~\ref{s:Gromov sp} reviews the construction of the Gromov space
\(\MM_*\), and the pointed Gromov-Hausdorff distance with possible
infinite values, \(d_{GH}\), between isometry classes of pointed
proper metric spaces. This distance defines the relation ``being at
finite Gromov-Hausdorff distance'' over \(\MM_*\), denoted by
\(E_{GH}\). Another equivalence relation over \(\MM_*\) introduced in
this section is ``being quasi-isometric,'' denoted by \(E_{QI}\),
which turns out to be induced by a distance function with possible
infinite values, \(d_{QI}\).

Sections~\ref{s:GH} and~\ref{s:QI} analyze the metric equivalence
relations given by \((d_{GH}, E_{GH})\) and \((d_{QI},E_{QI})\) over
\(\MM_*\).

Our analysis culminates in the following theorem.

\begin{theorem}\label{t: d_infty, d_GH and d_QI}
  If \((d,E)\) is \((d_\infty,E_\infty)\), \((d_{GH},E_{GH})\) or
  \((d_{QI},E_{QI})\), then:
\begin{enumerate}[\textup{(}i\textup{)}]

\item The metric equivalence relation \((d,E)\) is
turbulent.

\item \(E\) is generically \(E_{S_\infty}^Y\)-ergodic for every Polish
  \(S_\infty\)-space \(Y\).

\end{enumerate}
\end{theorem}

Parts~(ii) of this result applies to the case of \(Y\) being the
\(S_\infty\)-space of countable structures and thus can be seen as
justification of a metric space version of the so called Gromov's
principle for discrete groups: ``No statement about all finitely
presented groups is both non-trivial and true.''

\section{Continuous relations}\label{s:cont rel}

Let \( \2=\{ 0,1\}\) denote the two-point set. If \(X\) is any set,
then \(\2^X\), the set of mappings \(X\to \2\), is naturally identified
with the set of all subsets of \(X\) by means of the characteristic
mapping of a subset.

If \(A\subseteq X\), let
\[ 
  P_A = \{\, B\subseteq X \mid B\cap A\ne\emptyset\,\}.
\]
There is a natural identification
\begin{equation}\label{2 A=2 X setminus P X setminus A} \2^A = \2^X
  \setminus P_{X\setminus A}.
\end{equation}
Moreover \(P_\emptyset = \emptyset\) and \(P_X =
\2^X\setminus\{\emptyset\}\), and for any set \(I\subseteq \2^X\) of
subsets of \(X\), \(P_{\bigcup_{A\in I} A} = \bigcup_{A\in I} P_{A}\)
and \(P_{\bigcap_{A\in I}A} \subseteq\bigcap_{A\in I} P_{A}\). If
\(X\) is a topological space, then \(\2^X\) becomes a topological
space when endowed with the topology that has \(\{P_U\mid \text{\(U\)
  open in \(X\)}\}\) as a subbase. This is called the Vietoris
topology~(Vietoris~\cite{Vietoris1922}, Michael~\cite{Michael1951}).
In what follows, provided that \(X\) is a topological space and unless
otherwise stated, \(\2^X\) will always be endowed with the Vietoris
topology.

If \(\BB\) is a base for a topology on \(X\), then
\[
\biggl\{\,\bigcap_{U\in\CC}P_U \mid \text{\(\CC\) is a finite subset
  of \(\BB\)}\,\biggr\}
\] 
is a base for the Vietoris topology on \(\2^X\). It follows in particular that
\(\2^X\) is second countable if \(X\) is second countable.

A (binary) relation, \(E\), over sets, \(X\) and \(Y\), is a subset
\(E\subseteq X\times Y\). The sets \(X\) and \(Y\) are called the
\textit{source} and \textit{target} of \(E\), respectively. The
notation \(x E y\) means \((x,y)\in E\). For \(x\in X\), the (possibly
empty) set \(E(x)=\{\, y\in Y\mid x E y\,\}\) is called the
\textit{target fiber} of \(E\) over \(x\). The relation \(E\) is
completely specified by its target fiber map \(x\in X\mapsto
E(x)\in\2^Y\). More generally, the notation \(E(S)=\bigcup_{x\in
  S}E(x)\in\2^Y\) will be used for each \(S\subseteq X\). The target
fiber map can also be used to realize \(E(S)\) as a subset of
\(\2^Y\); the context will clarify this ambiguity.

\begin{defn}\label{defn:continuous_relation}
  A relation, \(E\), over two topological spaces, \(X\) and \(Y\), is
  called \textit{continuous} if the target fiber map \(x\in X\mapsto
  E(x) \in \2^Y\) is continuous.
\end{defn}

The following result follows directly from~\eqref{2 A=2 X setminus P X
  setminus A}.

\begin{lemma}[{\cite[Proposition
    2.1]{Michael1956-I}}]\label{l:cont. relation} 
  A relation \(E\subseteq X\times Y\) is continuous if and only if,  for every
  closed set \(F\subseteq Y\), the set \(
  \{x\in X\ \mid E(x)\subseteq F\} \) is closed in \(X\).
\end{lemma}

Let \(\pi_X\) and \(\pi_Y\) denote the factor projections of \(X\times
Y\) onto \(X\) and \(Y\), respectively. If \(A\subseteq X\),
\(B\subseteq Y\), and \(x\in X\), then
\begin{align}
  A\cap E^{-1}(P_B)&=\pi_X(E\cap(A\times B)), \label{A cap E -1(P B)=pi X(E cap(A times B)}\\
  E(x)&=\pi_Y(E\cap(\{x\}\times Y)). \label{E(x)=pi Y(E cap(x times
    Y)}
\end{align}
The following lemma is an easy consequence of~\eqref{A cap E -1(P
  B)=pi X(E cap(A times B)}.

\begin{lemma}\label{l:E cont. iff pi X:E to X open} A relation
  \(E\subseteq X\times Y\) is continuous if and only if the restriction
  \(\pi_X|_E : E\to X\) is an open mapping.
\end{lemma}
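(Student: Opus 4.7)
The plan is to unfold the definition of continuity using a subbase for the Vietoris topology, and then translate it directly into an openness statement about $\pi_X|_E$ via the identity in~\eqref{A cap E -1(P B)=pi X(E cap(A times B)}.

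First I would reduce continuity of the target fiber map to a statement about one type of subbasic open set. Since $\{P_U \mid U \text{ open in } Y\}$ is a subbase for the Vietoris topology on $2^Y$, the map $x \mapsto E(x)$ is continuous if and only if $E^{-1}(P_U) = \{x \in X \mid E(x) \cap U \neq \emptyset\}$ is open in $X$ for every open $U \subset Y$. Taking $A = X$ in~\eqref{A cap E -1(P B)=pi X(E cap(A times B)}, this can be rewritten as the condition that $\pi_X(E \cap (X \times U))$ is open in $X$ for every open $U \subset Y$. More generally, for any open $V \subset X$ and open $U \subset Y$, the same identity gives
\[
  \pi_X(E \cap (V \times U)) = V \cap E^{-1}(P_U).
\]

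For the implication ``$E$ continuous $\Rightarrow$ $\pi_X|_E$ open'', I would observe that the sets $E \cap (V \times U)$ with $V$ open in $X$ and $U$ open in $Y$ form a base for the subspace topology on $E$. Under the continuity hypothesis each $E^{-1}(P_U)$ is open, hence so is $V \cap E^{-1}(P_U)$; the displayed equation then shows that $\pi_X|_E$ sends every basic open set of $E$ to an open set of $X$. Since taking images commutes with unions, this suffices to conclude that $\pi_X|_E$ is an open map.

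For the converse implication, I would apply openness of $\pi_X|_E$ to the particular open set $E \cap (X \times U)$ for an arbitrary open $U \subset Y$, which shows $E^{-1}(P_U) = \pi_X(E \cap (X \times U))$ is open in $X$, giving continuity of $E$ by the subbase reformulation above. There is no real obstacle; the only point requiring a bit of care is the use of a basis rather than a subbase for the topology on $E$ in the first direction, since $E$ need not be a product space, so one must work with the intersections $E \cap (V \times U)$ directly.
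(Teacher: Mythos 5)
Your proof is correct and follows exactly the route the paper intends: the paper gives no detailed argument, simply noting that the lemma is an easy consequence of the identity $A\cap E^{-1}(P_B)=\pi_X(E\cap(A\times B))$, which is precisely the identity you exploit together with the subbase $\{P_U\}$ of the Vietoris topology and the base $\{E\cap(V\times U)\}$ of the subspace topology on $E$. Your write-up just makes explicit the standard bookkeeping (images commute with unions; the special case $V=X$ for the converse) that the paper leaves to the reader.
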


If \(E\) is a relation over \(X\) and \(Y\), then the
\textit{opposite} of \(E\) is the relation \(E^{\op}\) over \(Y\) and
\(X\) given by
\[
    E^{\op}=\{\,(y,x)\in Y\times X\ |\ x E y\,\}.
\]
The target fibers of \(E^{\op}\) are \(E^{\op}(y) = E^{-1}(P_{\{y\}})\),
and are called \textit{source fibers} of \(E\). Note that for all
\(A\subseteq X\) and all \(B\subseteq Y\),
\begin{align}
  (E^{\op})^{-1}(P_A) & = E(A), \label{(E op) -1(P A)}\\
  (E\cap(A\times B))^{\op} & = E^{\op} \cap (B\times A). \label{(E cap(A times B) op}
\end{align}
Because of~\eqref{(E op) -1(P A)}, \(E^{\op}:Y\to\2^X\) is continuous
if and only if, for any open set \(O\subseteq X\), the set \(E(O)\) is
open in \(Y\). In the case of equivalence relations, it is usually
said that \(E\) is open when this property is satisfied; this term is
now generalized to arbitrary relations.

\begin{defn}\label{defn:bi-continuous relation}
  A relation over topological spaces is called \textit{open} if its 
  opposite relation is continuous, and it is called
  \textit{bi-continuous} if it is both continuous and open.
\end{defn}

Relation \(E\) could also be open in the sense that the map
\(E:X\to\2^Y\) is open; this possible ambiguity will be clarified by
the context.

If \(E\) is a symmetric relation over a space \(X\), then the source
and target fibers are equal, and are simply called \textit{fibers} of
\(E\), and so \(E\) is bi-continuous if and only if \(E\) is
continuous.

\begin{example}\label{ex:cont. relation} The following are basic
  examples of continuous and bi-continuous relations.

  \begin{enumerate}[(i)]

  \item If \(E\) is the graph of a map \(f:X\to Y\), then \(E\)
    (respectively, \(E^{\op}\)) is continuous just when \(f\) is
    continuous (respectively, open). In particular, the diagonal
    \(\Delta_X\subseteq X\times X\) is a bi-continuous relation over
    \(X\) because it is the graph of the identity map of \(X\).

  \item If \(E\subseteq X\times Y\) is an open subset, then \(E\) is a
    bi-continuous relation over \(X\) and \(Y\).

  \item If \(E\) is a continuous relation over \(X\) and \(Y\), then
    \(E\cap(A\times V)\) is a continuous relation over \(A\) and \(V\), for any
    \(A\subseteq X\) and any open \(V\subseteq Y\). Thus, by~\eqref{(E cap(A
      times B) op}, if \(E\) is bi-continuous, then, for all open subsets
    \(U\subseteq X\) and \(V\subseteq Y\), the relation  \(E\cap(U\times V)\)
     over \(U\) and \(V\) is bi-continuous.

   \item An equivalence relation is bi-continuous precisely when the
     saturation of any open set is an open set. In particular, the
     equivalence relation defined by the orbits of a continuous group
     action is bi-continuous, and the equivalence relation defined by
     the leaves of a foliated space is also bi-continuous.

  \end{enumerate}
\end{example}

For any set of relations, \(R\subseteq 2^{X\times Y}\), and any
\(A\subseteq Y\), the following properties hold:
\begin{align}
  \Bigl(\bigcup_{E\in R}E\Bigr)^{-1}(P_A)&=\bigcup_{E\in R}E^{-1}(P_A),\label{(bigcup iE i) -1(P A)}\\
  \Bigl(\bigcap_{E\in R} E\Bigr)^{-1}(P_A)&\subseteq\bigcap_{E\in R}E^{-1}(P_A),\notag\\
  \Bigl(\bigcup_{E\in R}E\Bigr)^{\op}&=\bigcup_{E\in R}E^{\op},\label{(bigcup iE i) op}\\
  \Bigl(\bigcap_{E\in R}E\Bigr)^{\op}&=\bigcap_{E\in R}E^{\op}.\label{(bigcap
    iE i) op}
\end{align} 
The following result is a direct consequence of~\eqref{(bigcup iE i)
  -1(P A)} and~\eqref{(bigcup iE i) op}.

\begin{lemma}\label{l:union of cont and bi-cont rels} If \(R\) is a
  set of continuous \textup{(}respectively, bi-continuous\textup{)}
  relations over \(X\) and \(Y\), then \(\bigcup_{E\in R}E\) is a
    continuous \textup{(}respectively, bi-continuous\textup{)}
    relation over \(X\) and \(Y\).
\end{lemma}

\begin{rem}
  The intersection of two continuous relations is a relation that need
  not be continuous. For example, if \(E_1\) and \(E_2\) are the
  relations over \(\R\) given by the graphs of two different linear 
  mappings \(\R\to \R\), then 
  \(E_1\cap E_2=\{(0,0)\}\) is not a continuous relation.  However, the
  intersection of two continuous relations is continuous when one of
  the relations is also an open subset (Example~\ref{ex:cont.
    relation}-(ii)), as the next lemma shows.
\end{rem}

\begin{lemma}\label{l:intersection of a cont rel and an open set}
  Let \(E\) be a continuous \textup{(}respectively,
  bi-continuous\textup{)} relation over \(X\) and \(Y\), and let
  \(F\subseteq X\times Y\) be an open subset. Then \(E\cap F\) is
  continuous \textup{(}respectively, bi-continuous\textup{)} relation
  over \(X\) and \(Y\).
\end{lemma}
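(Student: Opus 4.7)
The plan is to reduce the statement to Lemma~\ref{l:E cont. iff pi X:E to X open}, which characterizes continuity of a relation $R \subset X \times Y$ by openness of the restricted projection $\pi_X|_R : R \to X$. The argument then becomes a purely topological observation about how an open subset interacts with the subspace topology.

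For the continuous case, I would take any open set $W$ in $E \cap F$, write it as $W = (E \cap F) \cap U$ with $U$ open in $X \times Y$, and then rewrite $W = E \cap (F \cap U)$. Since $F$ is open in $X \times Y$, so is $F \cap U$, which means $W$ is open in $E$. By hypothesis and Lemma~\ref{l:E cont. iff pi X:E to X open}, $\pi_X|_E$ is an open mapping, so $\pi_X(W)$ is open in $X$. Since $W$ was arbitrary, $\pi_X|_{E\cap F}$ is open, and applying Lemma~\ref{l:E cont. iff pi X:E to X open} in the other direction finishes the continuous case.

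For the bi-continuous case, the cleanest route is to pass to opposites. Using~\eqref{(E cap(A times B) op} with $A=X$ and $B=Y$ gives $(E \cap F)^{\op} = E^{\op} \cap F^{\op}$, and $F^{\op}$ is open in $Y \times X$ because the swap $X \times Y \to Y \times X$ is a homeomorphism. Since $E$ bi-continuous means both $E$ and $E^{\op}$ are continuous, the already-proved continuous case applied to $E^{\op}$ and $F^{\op}$ shows $(E \cap F)^{\op}$ is continuous, i.e., $E \cap F$ is open; combined with the first part, $E \cap F$ is bi-continuous.

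There is essentially no serious obstacle: the whole content is that openness of $F$ lets one absorb the extra intersection into the ambient open set $U$ before restricting to $E$. The only point deserving care is keeping straight the two distinct meanings of \emph{open} in play (openness of a relation in the sense of Definition~\ref{defn:bi-continuous relation} versus openness of the projection map in Lemma~\ref{l:E cont. iff pi X:E to X open}), but the reduction to opposites via~\eqref{(E cap(A times B) op} handles this transparently.
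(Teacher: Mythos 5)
Your proof is correct, and for the continuity part you take a genuinely different route from the paper. The paper verifies continuity directly against the subbase $\{P_V\}$: it picks $x\in(E\cap F)^{-1}(P_V)$, chooses a basic open box $U\times W\subset F$ around a witness $(x,y)$, and invokes Example~\ref{ex:cont. relation}-(iii) for the restriction $E\cap(U\times W)$. You instead pass through Lemma~\ref{l:E cont. iff pi X:E to X open} and observe that every relatively open subset of $E\cap F$ is of the form $(E\cap F)\cap U = E\cap(F\cap U)$, hence already relatively open in $E$, so its projection to $X$ is open. This is cleaner: the openness of $F$ is absorbed into the ambient open set in one step, with no need to localize to a box or reinvoke the restriction lemma. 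The bi-continuity part is the paper's own argument (reduce to opposites and cite the continuity case). One minor slip: the identity $(E\cap F)^{\op}=E^{\op}\cap F^{\op}$ that you need follows from~\eqref{(bigcap iE i) op}, not from~\eqref{(E cap(A times B) op} with $A=X$, $B=Y$ — the latter just returns $E^{\op}=E^{\op}$ and says nothing about $F$. Since the identity is a triviality about relations, this is a citation error rather than a gap.
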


\begin{proof}
  Suppose that \(E\) is continuous. Let \(V\subseteq Y\) be an open
  set. For every \(x\in(E\cap F)^{-1}(P_V)\) there is  \(y\in(E\cap
  F)(x)\cap V=E(x)\cap F(x)\cap V\).Then \((x,y)\in F\) and, since \(F\) is an open subset of
  \(X\times Y\), there are open sets \(U\subseteq X\)
  and \(W\subseteq Y\) such that \((x,y)\in U\times W\subseteq F\). By
  Example~\ref{ex:cont. relation}-(iii), \(E\cap (U\times W) \) is a
  continuous relation over \(U\) and \(W\), and so \((E\cap(U\times
  W))^{-1}(P_V)\) is open in \(U\), hence in \(X\). Since \(
  x\in(E\cap(U\times W))^{-1}(P_V)\subseteq(E\cap F)^{-1}(P_V)\), this
  shows that \( (E\cap F)^{-1}(P_V)\) is open in \(X\), and hence that
  \(E\cap F\) is a continuous relation.
  
  If \(E\) is a bi-continuous relation, then \(E\cap F\) is a
  bi-continuous relation because of
  Example~\ref{ex:cont. relation}-(ii) and~\eqref{(bigcap iE i) op}.
\end{proof}

The \textit{composition} of two relations, \(E\subseteq X\times Y\) and
\(F\subseteq Y\times Z\), is the relation \(F\circ E\subseteq X\times Z\)
given by
\[
  F\circ E=\{ \, (x,z)\in X\times Z \mid \text{\(\exists y\in Y\) such that \(x E y\) and \(y F z\)}\, \}.
\]
Composition of relations is an associative operation and \(\Delta_X\)
is its identity at \(X\).  Moreover
\begin{equation}\label{(F circ E) op=E op circ F op} 
  (F\circ E)^{\op}=E^{\op}\circ F^{\op}.
\end{equation}
If \(E\subseteq X\times X\) is a relation, the symbol \(E^n\), for positive
\(n\in\N\), denotes the \(n\)-fold composition \(E\circ\dots\circ E\), and
\(E^0=\Delta_X\). If \(E'\subseteq X'\times
Y'\) is another relation over topological spaces, let \(E\times E'\) be
the relation over \(X\times X'\) and \(Y\times Y'\) given by
\[
  E\times E'=\{\, (x, x', y, y')\in X\times X'\times Y\times Y' \mid\text{\(x E y\) and \( x' E' y'\)} \,\}.
\]
Note that
\begin{equation}\label{(E times E') op=E op times E prime op}
  (E\times E')^{\op}=E^{\op}\times {E'}^{\op}.
\end{equation}

For relations \(E\subseteq X\times Y\) and \(G\subseteq X\times Z\), let
\((E,G)\) denote the relation over \(X\) and \(Y\times Z\) given by
\[
  (E,G)=\{\, (x,y,z)\in X\times Y\times Z\mid \text{\(x E y\) and \(x G z\)}\, \}.
\]

\begin{lemma}\label{l:F circ E is continuous}
  The following properties hold for relations:
  \begin{enumerate}[\textup{(}i\textup{)}]
  
  \item If \(E\) and \(F\) are continuous \textup{(}respectively,
    bi-continuous\textup{)} relations, then \(F\circ E\) is
    continuous \textup{(}respectively, bi-continuous\textup{)}
    relation.

  \item If \(E\) and \(E'\) are continuous \textup{(}respectively,
    bi-continuous\textup{)} relations, then \(E\times E'\) is a
    continuous \textup{(}respectively, bi-continuous\textup{)}
    relation.
    
  \item If \(E\) and \(G\) are continuous relations, then \((E,G)\) is a
    continuous relation.

  \end{enumerate}
\end{lemma}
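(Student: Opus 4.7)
The plan is to prove all three parts by computing target fibers of the compound relations explicitly and checking continuity via the subbasic criterion: namely, that $E$ is continuous iff $E^{-1}(P_U)$ is open for every open $U$ in the target (which, since $\{P_U\mid U\text{ open}\}$ is a subbase for the Vietoris topology, is equivalent to Definition~\ref{defn:continuous_relation}). Bi-continuity in parts (i) and (ii) will then be obtained by applying the continuity statements to the opposite relations, using the already-established identities \eqref{(F circ E) op=E op circ F op} and \eqref{(E times E') op=E op times E prime op}.

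For (i), I would first observe $(F\circ E)(x)=F(E(x))$, from which
\[
  (F\circ E)^{-1}(P_A)\;=\;\{x\in X\mid E(x)\cap F^{-1}(P_A)\neq\emptyset\}\;=\;E^{-1}\bigl(P_{F^{-1}(P_A)}\bigr)
\]
for every $A\subset Z$. If $A$ is open and $F$ is continuous, then $F^{-1}(P_A)$ is open in $Y$, and if in addition $E$ is continuous then $E^{-1}(P_{F^{-1}(P_A)})$ is open in $X$, yielding continuity of $F\circ E$. For the bi-continuous case, \eqref{(F circ E) op=E op circ F op} gives $(F\circ E)^{\op}=E^{\op}\circ F^{\op}$, which is a composition of continuous relations (since $E$ and $F$ are open), hence continuous by what was just proved; thus $F\circ E$ is open, and therefore bi-continuous.

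For (ii), the target fibers are $(E\times E')(x,x')=E(x)\times E'(x')$. For a basic open rectangle $U\times V\subset Y\times Y'$, a direct check gives
\[
  (E\times E')^{-1}(P_{U\times V})\;=\;E^{-1}(P_U)\times {E'}^{-1}(P_V)\;,
\]
which is open in $X\times X'$ by continuity of $E$ and $E'$. An arbitrary open $W\subset Y\times Y'$ can be written as $W=\bigcup_i U_i\times V_i$, and the identity $P_{\bigcup_i A_i}=\bigcup_i P_{A_i}$ recalled in Section~\ref{s:cont rel} then yields $(E\times E')^{-1}(P_W)=\bigcup_i E^{-1}(P_{U_i})\times {E'}^{-1}(P_{V_i})$, which is open. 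Bi-continuity follows by applying the continuous case to $(E\times E')^{\op}=E^{\op}\times{E'}^{\op}$ via \eqref{(E times E') op=E op times E prime op}.

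For (iii), the target fibers are $(E,G)(x)=E(x)\times G(x)$, and a parallel calculation gives
\[
  (E,G)^{-1}(P_{U\times V})\;=\;E^{-1}(P_U)\cap G^{-1}(P_V)
\]
for basic opens $U\times V\subset Y\times Z$, which is open since both factors are. Extending to a general open $W\subset Y\times Z$ proceeds exactly as in (ii) by decomposing $W$ into a union of rectangles and using $P_{\bigcup_i A_i}=\bigcup_i P_{A_i}$. There is no obstacle here, and no bi-continuous analog is claimed, which is consistent with the fact that $(E,G)^{\op}(y,z)=E^{\op}(y)\cap G^{\op}(z)$ and intersections do not behave well under the Vietoris topology (cf.\ the remark preceding Lemma~\ref{l:intersection of a cont rel and an open set}). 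The only subtle bookkeeping step throughout is the reduction from arbitrary opens in a product to unions of rectangles; once this is in place, everything else is a direct application of the fiber identities and the opposite-relation formulas already derived.
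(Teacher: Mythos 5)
Your proof is correct, and in parts (i) and (ii) it is essentially the paper's own argument: the same fiber identities $(F\circ E)^{-1}(P_W)=E^{-1}\bigl(P_{F^{-1}(P_W)}\bigr)$ and $(E\times E')^{-1}(P_{V\times V'})=E^{-1}(P_V)\times{E'}^{-1}(P_{V'})$, with bi-continuity deduced from~\eqref{(F circ E) op=E op circ F op} and~\eqref{(E times E') op=E op times E prime op}. The genuine divergence is in (iii): the paper deduces it from (i) and (ii) via the factorization $(E,G)=(E\times G)\circ(\Delta_X,\Delta_X)$, where $(\Delta_X,\Delta_X)$ is continuous because it is the graph of the diagonal map $x\mapsto(x,x)$, whereas you compute directly that $(E,G)^{-1}(P_{U\times V})=E^{-1}(P_U)\cap G^{-1}(P_V)$ and then pass to an arbitrary open $W\subset Y\times Z$ by writing it as a union of rectangles and using $P_{\bigcup_iA_i}=\bigcup_iP_{A_i}$. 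Both routes are valid: the paper's is shorter and reuses the closure properties just established (consistent with its categorical viewpoint on relations), while yours is more self-contained and makes explicit the reduction from arbitrary opens of a product to rectangles---a step the paper leaves implicit in (ii) as well, it being justified there by the earlier observation that $\{P_B\mid B\in\BB\}$ is a subbase of the Vietoris topology whenever $\BB$ is a base of the target space.
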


\begin{proof}
  In~(i) and~(ii), the statements about continuity hold because
\begin{align*}
  (F\circ E)^{-1}(P_W) & = E^{-1}\left(P_{F^{-1}(P_W)}\right), \\
  (E\times E')^{-1}(P_{V\times V'}) & = E^{-1}(P_V)\times
  E^{\prime-1}(P_{V'}),
\end{align*}
for \(W\subseteq Z\), \(V\subseteq Y\) and \(V'\subseteq Y'\), and the statements
about bi-continuity follow from~\eqref{(F circ E) op=E op circ F op}
and~\eqref{(E times E') op=E op times E prime op}. Property~(iii) is a
consequence of~(i) and~(ii) since
\[
(F,G)=(F\times G)\circ(\Delta_X,\Delta_X),
\]
where \((\Delta_X,\Delta_X)\) is continuous because it is the graph of the
diagonal mapping \(x\mapsto(x,x)\).
\end{proof}

A consequence of Lemma~\ref{l:F circ E is continuous}-(i) is that the
continuous relations (and also the bi-continuous relations) over
topological spaces are the morphisms of a category with the operation
of composition. The assignment \(E\mapsto E^{\op}\) is a contravariant
functor of the category of bi-continuous relations to itself.

\begin{lemma}\label{l:E(x) is dense in Y}  The
  following properties hold for continuous relations over a
  topological space, \(X\), and a second countable topological space,
  \(Y\).
  \begin{enumerate}[\textup{(}i\textup{)}]
    
  \item If \(E\subseteq X\times Y\) is a continuous relation, then
    \[ 
    \{\, x\in X \mid \text{\(E(x)\) is dense in \(Y\)}\,\}
    \]
    is a \(G_\delta\) subset of \(X\).
    
  \item If \(E,F\subseteq X\times Y\) are continuous relations and
    \(E\subseteq F\), then
    \[
    \{\, x\in X \mid \text{\(E(x)\) is dense in \(F(x)\)}\,\}
    \]
    is a Borel subset of \(X\).
    
  \end{enumerate}
\end{lemma}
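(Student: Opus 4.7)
The plan is to exploit second countability of $Y$: fix once and for all a countable base $\{V_n\}_{n\in\N}$ of non-empty open sets for the topology of $Y$. By Definition~\ref{defn:continuous_relation} (or equivalently Lemma~\ref{l:cont. relation} combined with~\eqref{2 A=2 X setminus P X setminus A}), for any continuous relation $R\subset X\times Y$ and any open $V\subset Y$, the preimage $R^{-1}(P_V)=\{\,x\in X\mid R(x)\cap V\ne\emptyset\,\}$ is open in $X$. With this in hand, both statements reduce to rewriting the relevant density condition as a countable Boolean combination of such preimages.

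For~(i), $E(x)$ is dense in $Y$ if and only if $E(x)\cap V_n\ne\emptyset$ for every $n$, i.e., $x\in\bigcap_n E^{-1}(P_{V_n})$, a countable intersection of open sets and hence $G_\delta$.

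For~(ii), $E(x)$ is dense in $F(x)$ if and only if every open $V\subset Y$ meeting $F(x)$ also meets $E(x)$; testing this against the fixed base, the condition becomes the countable conjunction of implications $x\in F^{-1}(P_{V_n})\Rightarrow x\in E^{-1}(P_{V_n})$, so
\[
\{\,x\in X\mid E(x)\text{ is dense in }F(x)\,\}=\bigcap_n\bigl((X\setminus F^{-1}(P_{V_n}))\cup E^{-1}(P_{V_n})\bigr).
\]
Each set in this intersection is a union of a closed set and an open set, hence Borel, so the whole set is Borel as a countable intersection of Borel sets.

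The only step requiring a short justification is the basis reduction in~(ii), since $F(x)$ is not in general open: if an arbitrary open $V\subset Y$ meets $F(x)$ at a point $y$, then picking $V_n$ with $y\in V_n\subset V$ gives a basic open meeting $F(x)$, and if all such $V_n$ are assumed to meet $E(x)$ this forces $V$ to meet $E(x)$ as well. Beyond that isolated point, everything is formal manipulation built on the characterization of continuity in Definition~\ref{defn:continuous_relation}, and no deeper obstacle is anticipated.
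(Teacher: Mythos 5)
Your proposal is correct and coincides with the paper's own argument: both reduce density to intersection with a fixed countable base, express the set in~(i) as $\bigcap_n E^{-1}(P_{V_n})$ and the set in~(ii) as $\bigcap_n\bigl(E^{-1}(P_{V_n})\cup(X\setminus F^{-1}(P_{V_n}))\bigr)$, and conclude via continuity of $E$ and $F$. The extra remark justifying the basis reduction in~(ii) is a fine (if routine) addition; no gap.
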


\begin{proof}
  Let \(\BB\) be a countable base of non-empty open sets for the
  topology of \(Y\).  Then Property~(i) is satisfies because
  \[
  \{\, x\in X\mid \text{\(E(x)\) is dense in
    \(Y\)}\,\}=\bigcap_{U\in\BB}E^{-1}(P_U),
  \]
the intersection of countably many open subsets of \(X\), 
  and Property~(ii) is satisfied because
  \begin{multline*}
    \{\, x\in X\mid \text{\(E(x)\) is dense in \(F(x)\)}\,\}\\
    \begin{aligned}
      &=\bigcap_{U\in\BB}\{\, x\in X\mid x\in F^{-1}(P_U)\Rightarrow x\in E^{-1}(P_U)\,\}\\
      &=\bigcap_{U\in\BB} \bigl( E^{-1}(P_U)\cup(X\setminus
      F^{-1}(P_U))\bigr),
    \end{aligned}
  \end{multline*}
the intersection of countably many Borel subsets of \(X\) (each the union of an
open set and a closed set).
\end{proof}

\begin{defn}\label{defn:transitive relation} An equivalence relation
  over a topological space is called \textit{topologically transitive}
  (respectively, \textit{topologically minimal}) if some equivalence
  class is dense (respectively, every equivalence class is dense).
\end{defn}

The following concepts and notation will be used frequently.

\begin{defn}
  \begin{enumerate}[(i)]

  \item A subset of a topological space is \textit{meager} if it is the
    countable intersection of nowhere dense subsets.

  \item A subset of a topological space is \textit{residual} if it contains the
    intersection of countably many open, dense subsets.

  \item A subset of a topological space has the \textit{Baire property} if it
    differs from an open set in a meager set.

  \item A topological space is \textit{Baire} if every residual subset is
    dense.

  \end{enumerate}
\end{defn}

\begin{defn}
Let \(P\) be a property that members of sets may or may not satisfy. Let
\(X\) be a topological space.

\begin{enumerate}[(i)]

\item Property \(P\) is satisfied by \textit{residually many} members of 
 \( X\), and denoted by \( (\forall^*x\in X)P(x)\), 
  if the set \( \{\,x\in X \mid P(x)\,\}\) is residual in \(X\).

\item Property \(P\) is satisfied by \textit{non-meagerly many}
  members of \(X\), and denoted by \( (\exists^* x\in X) P(x)\), if
  the set \(\{\, x\in X\mid P(x)\,\}\) is non-meager.

\end{enumerate}
\end{defn}

\begin{cor}\label{c:E(x) is dense in X} 
  If \(X\) is second countable and \(E\) is a topologically
  transitive, continuous equivalence relation over \(X\), then,
    \(\forall^*x\in X\), \( E(x) \) is dense in \(X\).
\end{cor}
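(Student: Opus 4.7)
The plan is a short direct argument that combines topological transitivity with the $G_\delta$ conclusion of Lemma~\ref{l:E(x) is dense in Y}-(i). Set
\[
  D=\{\,x\in X\mid \text{$E(x)$ is dense in $X$}\,\}\;.
\]
Since $E$ is a continuous relation over $X$ and $X$ is second countable, Lemma~\ref{l:E(x) is dense in Y}-(i) applies (with $Y=X$) and shows that $D$ is a $G_\delta$ subset of $X$. So the remaining task is to show that $D$ is dense.

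The key observation is that $D$ is $E$-saturated. Indeed, $E$ being an equivalence relation means that $E(y)=E(x)$ whenever $y\in E(x)$, so if $E(x)$ is dense then so is $E(y)$; hence $x\in D$ implies $E(x)\subset D$. By the hypothesis of topological transitivity, there is some $x_0\in X$ such that $E(x_0)$ is dense in $X$; in particular $x_0\in D$, and by saturation $E(x_0)\subset D$. Therefore $D$ contains the dense set $E(x_0)$, so $D$ itself is dense.

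It remains to note that a dense $G_\delta$ subset is residual in the sense of the definition above. Writing $D=\bigcap_n U_n$ with each $U_n$ open, each $U_n$ contains the dense set $D$ and is therefore dense. Hence $D$ is the intersection of a countable family of dense open subsets, so the set $\{\,x\in X\mid E(x)\text{ is dense in }X\,\}$ is residual, which is exactly the statement $(\forall^*x\in X)(E(x)\text{ is dense in }X)$.

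There is really no main obstacle here once Lemma~\ref{l:E(x) is dense in Y}-(i) is in hand; the only subtle point is recognizing that equivalence classes are either entirely contained in $D$ or entirely disjoint from $D$, which turns a single dense class provided by transitivity into density of $D$.
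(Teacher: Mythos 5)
Your proof is correct and follows the same route as the paper: Lemma~\ref{l:E(x) is dense in Y}-(i) gives that the set of points with dense class is $G_\delta$, and topological transitivity makes it dense (hence residual). The paper states this in one line, while you spell out the saturation argument and why a dense $G_\delta$ is residual; these are exactly the implicit steps.
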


\begin{proof}
  By Lemma~\ref{l:E(x) is dense in Y}-(i), the set
\[
       \{\, x\in X\mid \text{\(E(x)\) is dense in \(X\)}\, \}
\]
   is a dense \(G_\delta\) subset of \(X\).
\end{proof}

\begin{lemma}\label{l:E(x) cap A is residual in E(x)} 
  Let \(X\) be a metrizable topological space, let \(Y\) be a
  second countable topological space, and let \(E\subseteq X\times Y\)
  be a continuous relation. If every target fiber of \(E\) is a
  Baire space, then the following properties hold:
  \begin{enumerate}[\textup{(}i\textup{)}]
    
  \item If \(A\) is a \(G_\delta\) subset of \(Y\), then
    \[
    \{\, x\in X\mid \text{\(E(x)\cap A\) is residual in \(E(x)\)}\, \}
    \]
    is a \(G_\delta\) subset of \(X\).

  \item If \(B\) is an \(F_\sigma\) subset of \(Y\), then
    \[
    \{x\in X \mid \text{\(E(x)\cap B\) is non-meager in \(E(x)\)}\}
    \]
    is an \(F_\sigma\) subset of \(X\).
          
  \item If \(B\) is a Borel subset of \(Y\), then
    \[
    \{\, x\in X \mid \text{\(E(x)\cap B\) is residual in \(E(x)\)}\, \} 
    \]
    and
    \[ 
    \{\, x\in X \mid \text{\(E(x)\cap B\) is non-meager in \(E(x)\)}\,\}
    \]
    are Borel subsets of \(X\).
    
  \end{enumerate}
\end{lemma}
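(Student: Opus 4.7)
The plan is to prove (ii) directly via an explicit decomposition using a countable base of \(Y\), deduce (i) from (ii) by complementation, and then handle (iii) by a transfinite induction built on (i) and (ii). Throughout, I interpret the hypothesis as asserting that the target fiber \(E(x)\) is a Baire space (so that ``residual in \(E(x)\)'' and ``meager in \(E(x)\)'' are meaningful and mutually exclusive).

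For part (ii), write \(B=\bigcup_n F_n\) with each \(F_n\) closed in \(Y\). Since meager sets form a \(\sigma\)-ideal and \(E(x)\cap B=\bigcup_n(E(x)\cap F_n)\), non-meagerness decomposes as a disjunction over \(n\), so
\begin{equation*}
\{x\in X\mid E(x)\cap B\text{ is non-meager in }E(x)\}=\bigcup_n\{x\in X\mid E(x)\cap F_n\text{ is non-meager in }E(x)\}.
\end{equation*}
It therefore suffices to treat the case of a closed \(F\subset Y\). Since \(E(x)\) is Baire and \(E(x)\cap F\) is closed in \(E(x)\), the standard fact that a closed subset of a Baire space is non-meager iff it has non-empty interior gives: \(E(x)\cap F\) is non-meager iff there is some \(V\) in a countable base \(\BB\) of \(Y\) with \(\emptyset\neq V\cap E(x)\subset F\). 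Using continuity of \(E\), this translates to
\begin{equation*}
\{x\in X\mid E(x)\cap F\text{ is non-meager in }E(x)\}=\bigcup_{V\in\BB}\bigl(E^{-1}(P_V)\setminus E^{-1}(P_{V\setminus F})\bigr),
\end{equation*}
a countable union of differences of open subsets of \(X\), hence \(F_\sigma\).

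Part (i) follows from (ii) by complementation. If \(A\) is \(G_\delta\), then \(Y\setminus A\) is \(F_\sigma\), and in the Baire space \(E(x)\), \(E(x)\cap A\) is residual iff its complement \(E(x)\cap(Y\setminus A)\) is meager, iff \(E(x)\cap(Y\setminus A)\) is not non-meager. Hence
\begin{equation*}
\{x\in X\mid E(x)\cap A\text{ is residual in }E(x)\}=X\setminus\{x\in X\mid E(x)\cap(Y\setminus A)\text{ is non-meager in }E(x)\},
\end{equation*}
the complement of an \(F_\sigma\) set by (ii), hence \(G_\delta\).

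For part (iii), let \(\CC\) be the class of \(B\subset Y\) for which both sets in the statement are Borel in \(X\). By (i), (ii), and the complement identities \(\{x:E(x)\cap B\text{ residual}\}=X\setminus\{x:E(x)\cap(Y\setminus B)\text{ non-meager}\}\) and its dual, \(\CC\) contains every \(G_\delta\) and every \(F_\sigma\) subset of \(Y\) and is closed under complementation. A simultaneous transfinite induction along the Borel hierarchy extends \(\CC\) to all Borel sets: at level \(\alpha\), sets in \(\Sigma_\alpha^0\) are countable unions of sets in lower \(\Pi\)-classes (where the non-meager half survives by union-closure, since \(M\) commutes with countable unions), sets in \(\Pi_\alpha^0\) are countable intersections (where the residual half survives by intersection-closure, since \(R\) commutes with countable intersections), and the complement identities transfer the remaining Borel assertion between \(\Sigma_\alpha^0\) and \(\Pi_\alpha^0\) at the same level.

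The main obstacle is (iii): non-meagerness behaves well under countable unions while residuality does so under countable intersections, and these asymmetric properties must be balanced along the Borel hierarchy. Complementation, which swaps unions with intersections, provides the mechanism pairing each set operation to its compatible notion of category, making the simultaneous induction work.
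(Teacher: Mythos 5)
Your set-theoretic identities in (i) and (ii) are fine, but part (iii) contains a genuine gap, and it is exactly at the point your closing paragraph presents as the mechanism that makes the induction work. Write \(R(B)=\{x\mid E(x)\cap B\ \text{residual in}\ E(x)\}\) and \(M(B)=\{x\mid E(x)\cap B\ \text{non-meager in}\ E(x)\}\). Your induction propagates \(M\) along \(\Sigma^0_\alpha\) (countable unions) and \(R\) along \(\Pi^0_\alpha\) (countable intersections), and you propose to get the two remaining assertions, \(R\) on \(\Sigma^0_\alpha\) and \(M\) on \(\Pi^0_\alpha\), from the complement identities. But \(R(B)=X\setminus M(Y\setminus B)\) converts ``\(R\) of a \(\Sigma^0_\alpha\) set'' into ``\(M\) of a \(\Pi^0_\alpha\) set,'' which is precisely the other statement not yet proved, and \(M(B)=X\setminus R(Y\setminus B)\) sends it back; complementation only swaps the two unknowns and never produces \(M\) of a countable intersection from \(M\) of its pieces. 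The same defect already appears at your base case: from (i), (ii) and complementation you obtain \(R\) on \(G_\delta\) sets and \(M\) on \(F_\sigma\) sets, but \(M\) on a \(G_\delta\) set would require \(R\) on its \(F_\sigma\) complement, which is again the missing dual, so the claim that \(\CC\) contains all \(G_\delta\) and all \(F_\sigma\) sets is unjustified. The missing idea, which is how the paper handles (iii), is a localization: one proves Borelness of both category statements for \(E(x)\cap U\cap B\) relative to \(E(x)\cap U\) for \emph{every} open \(U\subset Y\) simultaneously, and then uses \cite[Proposition~8.26]{MR1321597} (a set with the Baire property is non-meager iff it is comeager on some nonempty basic open set) to write \(M\) of a countable intersection as a countable union, over a countable base, of the localized residual sets, which do pass to countable intersections. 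The paper's proof of (iii) is exactly this: it shows that the localized class is a \(\sigma\)-algebra containing the open sets, rather than running an induction along the Borel hierarchy. Without that ingredient your induction does not close.

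Two secondary remarks. First, in (ii) your identity \(\{x\mid E(x)\cap F\ \text{non-meager}\}=\bigcup_{V\in\BB}\bigl(E^{-1}(P_V)\setminus E^{-1}(P_{V\setminus F})\bigr)\) is correct under your (necessary) reading that the fibers \(E(x)\) are Baire, but a difference of two open sets is an open set intersected with a closed set, and that is \(F_\sigma\) only when open subsets of \(X\) are \(F_\sigma\) (e.g.\ \(X\) metrizable, as in all of the paper's applications); the stated hypothesis ``\(X\) a topological space'' does not provide this, so the step ``hence \(F_\sigma\)'' needs an extra assumption or a different formula, and the induced \(G_\delta\) claim in your (i) needs closed subsets of \(X\) to be \(G_\delta\) for the same reason. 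Second, note that your route is the reverse of the paper's, which proves (i) first (expressing the residual set as a countable intersection of sets \(E^{-1}(P_V)\)) and then deduces (ii) by complementation; your order is legitimate, but it is the \(F_\sigma\) issue above plus the gap in (iii) that need repair.
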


\begin{proof}
  To prove~(i), write \(A\) as an intersection \(A=\bigcap_{n\in \N} U_n\) of
  countable many open subsets \(U_n\subseteq Y\), and let \(\BB\) be a
    countable base for the topology of \(Y\). Then
  \begin{multline*}
    \{\, x\in X \mid \text{\(E(x)\cap A\) is residual in \(E(x)\)}\, \}\\
      \begin{aligned}
        &=\bigcap_{n\in \N}\{\, x\in X\mid \text{\(E(x)\cap U_n\) is residual in \(E(x)\)}\,\}\\
        &=\bigcap_{n\in \N}\{\, x\in X\mid \text{\(E(x)\cap U_n\) is dense in  \(E(x)\)}\,\}\\
        &=\bigcap_{n\in \N}\bigcap_{V\in\BB}\{\, x\in X\mid x\in E^{-1}(P_V)\Rightarrow x\in E^{-1}(P_{V \cap U_n})\,\}\\
        &=\bigcap_{n\in \N}\bigcap_{V\in\BB}(E^{-1}(P_{V \cap U_n})\cup(X\setminus E^{-1}(P_V))),
      \end{aligned}
    \end{multline*}
which is a \(G_\delta\) subset of \(X\); in fact, every \(E^{-1}(P_{V \cap
  U_n})\cup(X\setminus E^{-1}(P_V))\) is \(G_\delta\), because,  since
\(X\) is metrizable, closed
subsets of \(X\) are \(G_\delta\). 

   Property~(ii) is a consequence of~(i) because,  for every \(B\subseteq X\), 
    \begin{multline}\label{complement}
      \{\, x\in X\mid \text{\(E(x)\cap B\) is non-meager in \(E(x)\)} \,\}\\
      =X\setminus\{\, x\in X\mid \text{\(E(x)\cap(X\setminus B)\) is
        residual in \(E(x)\)}\,\}.
    \end{multline}
 
  To prove (iii), let \(\CC\) be the set of all Borel
  subsets \(B\subseteq Y\) such that, for any open subset \(U\subseteq Y\),
  the sets
\begin{gather}\label{E(x) cap U cap B is residual in E(x) cap U}
  \{\, x\in X\mid \text{\(E(x)\cap U\cap B\) is residual in \(E(x)\cap
    U\)}\,\}\\ 
    \intertext{and}
  \{\, x\in X\mid \text{\(E(x)\cap U\cap B\) is non-meager in \(E(x)\cap
    U\)}\,\}\label{E(x) cap U cap B is non-meager in E(x) cap U}
\end{gather}
are both Borel subsets of \(X\).

This \(\CC\) is a \(\sigma\)-algebra of subsets of \(Y\). Indeed, it
is closed under complementation, because of~\eqref{complement} and
Example~\ref{ex:cont. relation}-(iii), and it is also closed under
countable intersections, because if \(\{ C_n \mid n\in \N \}\) is a
countable set of members of \(\CC\), and \(U\subseteq Y\) is an open
set, then
\begin{multline*}
  \bigl\{\, x\in X\mid \text{\(E(x)\cap U\cap\bigcap_{n\in \N} C_n\) is residual in \(E(x)\cap U\)}\, \bigr\}\\
  =\bigcap_{n\in \N}\{\, x\in X\mid \text{\(E(x)\cap U\cap C_n\) is residual in \(E(x)\cap U\)}\,\}
\end{multline*}
is a Borel subset of \(X\), hence~\eqref{E(x) cap U cap B is residual in E(x) cap U}, and~\eqref{E(x) cap U cap B is non-meager in E(x) cap U}
follows from this: for any countable set \(\BB\) of open, non-empty,
subsets of \(U\) that is a base for the topology of \(U\), by
\cite[Proposition~8.26]{Kechris1995} (in a Baire space, a subset
  with the Baire property either is meager or is residual in some open
  set, but not both), and since the target fibers of
\(E\) are Baire spaces and \(\bigcap_{n\in \N} C_n\) has the Baire
property,
\begin{multline*}
  \bigl\{\, x\in X\mid \text{\(E(x)\cap U\cap\bigcap_{n\in \N} C_n\) is non-meager in \(E(x)\cap U\)}\,\bigr\}\\
  \begin{aligned}
  &=\bigcup_{V\in\BB}
  \bigl\{\, x\in X\mid \text{\(E(x)\cap V\cap\bigcap_{n\in \N} C_n\) is residual in \(E(x)\cap V\)}\,\bigr\}\\
  &=\bigcup_{V\in\BB}\bigcap_{n\in \N}\{\, x\in X\mid \text{\(E(x)\cap V\cap
    C_n\) is residual in \(E(x)\cap V\)}\,\}
    \end{aligned}
\end{multline*}
 is a Borel subset of \(X\), and so 
\(\bigcap_{n\in \N}C_n\in\CC\).

Every open subset  \(V\subseteq Y\) is a member of \(\CC\). Indeed, using
  Example~\ref{ex:cont. relation}-(iii), and applying~(i) and~(ii),
  \cite[Proposition~8.26]{Kechris1995}, the fact that \(E(x)\cap U\) is
  a Baire space, and the fact that open sets are \(F_\sigma\) because
  \(X\) is metrizable, it follows that
\[
    \{x\in X\ |\ E(x)\cap U\cap V\ \text{is non-meager in}\ E(x)\cap
    U\}=E^{-1}(P_{U\cap V}).
\] 
Consequently, \(\CC\) is the \(\sigma\)-algebra of all Borel subsets of
\(Y\), which establishes~(iii).
\end{proof}

\begin{lemma}\label{l:dense}
  Let \(E\subseteq X\times Y\) be an open relation over \(X\) and
  \(Y\). If \(A\subseteq B\subseteq Y\) and \(A\) is dense in \(B\), then
  \(E^{-1}(P_A)\) is dense in \(E^{-1}(P_B)\).
\end{lemma}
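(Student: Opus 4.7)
The plan is to unpack the definition of density directly: given $x_0 \in E^{-1}(P_B)$ and any open neighborhood $U$ of $x_0$ in $X$, I need to produce a point $x \in U$ that is also in $E^{-1}(P_A)$, i.e.\ a point $x \in U$ and $y \in A$ with $x E y$.

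First, since $x_0 \in E^{-1}(P_B)$, pick any $y_0 \in E(x_0) \cap B$. Then $y_0 \in E(U) \cap B$. The key move is to use the hypothesis that $E$ is open, which by the discussion just before Definition~\ref{defn:bi-continuous relation} (based on~\eqref{(E op) -1(P A)}) means that $E(U) \subset Y$ is open. Consequently $E(U) \cap B$ is a non-empty relatively open subset of $B$. Because $A$ is dense in $B$, there exists some $y \in A \cap E(U)$; unwinding the definition of $E(U)$ provides an $x \in U$ with $x E y$, hence $x \in U \cap E^{-1}(P_A)$.

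There is no real obstacle here: the entire argument is a direct application of the characterization of openness of $E$ combined with the definition of relative density of $A$ in $B$. The only thing to be careful about is not to confuse the two senses of ``open'' discussed after Definition~\ref{defn:bi-continuous relation}; the relevant one is that images of opens in $X$ under $E$ are open in $Y$, which is exactly what propels $y_0$ into a non-empty open subset of $B$ where a point of $A$ can be extracted.
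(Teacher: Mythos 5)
Your proposal is correct and is essentially the paper's own argument: both use that openness of $E$ makes $E(U)$ open in $Y$, so that density of $A$ in $B$ transfers a point of $E(U)\cap B$ to a point of $E(U)\cap A$, i.e.\ to a point of $U\cap E^{-1}(P_A)$. The paper merely phrases this as a chain of equivalences and one implication for an arbitrary open set $O$, while you unwind it pointwise; the content is identical.
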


\begin{proof}
  Let \(O\) be an open subset of \(X\). Since \(E(O)\) is open in \(Y\) and
  \(A\) dense in \(B\),
    \begin{multline*}
      O\cap E^{-1}(P_B)\ne\emptyset\Longleftrightarrow E(O)\cap B\ne\emptyset\\
      \Longrightarrow E(O)\cap A\ne\emptyset\Longleftrightarrow O\cap
      E^{-1}(P_A)\ne\emptyset.\qed
    \end{multline*}
\renewcommand{\qed}{}
\end{proof}

\begin{lemma}\label{l:open and dense} 
  Let \(E\) be a bi-continuous relation over the topological spaces
  \(X\) and \(Y\), and assume that \(Y\) is second countable. If \(B\)
  is open and dense in \(Y\), then, \(\forall^*x\in X\), \(B\cap
  E(x)\) is open and dense in \(E(x)\).
\end{lemma}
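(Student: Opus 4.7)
The plan is to reduce the statement to a countable family of conditions using a countable base for $Y$, and then to verify each of the resulting conditions fails only on a nowhere dense subset of $X$.

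Openness is free: since $B$ is open in $Y$, the intersection $B\cap E(x)$ is open in the subspace $E(x)$ for every $x\in X$, with no use of bi-continuity. What requires work is the density statement. Let $\BB$ be a countable base of non-empty open subsets of $Y$. A standard observation gives that $B\cap E(x)$ is dense in $E(x)$ if and only if, for every $V\in\BB$, the implication $x\in E^{-1}(P_V)\Rightarrow x\in E^{-1}(P_{V\cap B})$ holds. Consequently
\[
  R:=\{\,x\in X\mid B\cap E(x)\text{ is dense in }E(x)\,\}
  =\bigcap_{V\in\BB}\bigl((X\setminus E^{-1}(P_V))\cup E^{-1}(P_{V\cap B})\bigr),
\]
so it suffices to show that for each $V\in\BB$ the ``bad set'' $N_V:=E^{-1}(P_V)\setminus E^{-1}(P_{V\cap B})$ is nowhere dense in $X$; then $R^c\subset\bigcup_{V\in\BB}N_V$ is meager and $R$ is residual.

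To control $N_V$, note first that since $E$ is continuous, both $E^{-1}(P_V)$ and $E^{-1}(P_{V\cap B})$ are open in $X$. Since $B$ is dense in $Y$ and $V$ is open, $V\cap B$ is dense in $V$. Because $E$ is also open (that is, $E^{\op}$ is continuous), Lemma~\ref{l:dense} applies and yields that $E^{-1}(P_{V\cap B})$ is dense in $E^{-1}(P_V)$. Thus $N_V$ is the complement of an open dense subset taken inside an open subset of $X$; a short check shows that in this situation $N_V$ is nowhere dense in $X$ (any non-empty open set meeting the closure of $N_V$ would either intersect $E^{-1}(P_V)$, contradicting density of $E^{-1}(P_{V\cap B})$ there, or would lie in the boundary of an open set, which is closed nowhere dense).

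The main potential obstacle is the small topological verification that $N_V$ is nowhere dense in the ambient $X$ rather than merely in $E^{-1}(P_V)$; this is handled by the openness of $E^{-1}(P_V)$ together with the fact that $E^{-1}(P_{V\cap B})$ is open and dense inside it. Once this is in place, everything else is bookkeeping: second countability of $Y$ gives a countable $\BB$, the countable union of the $N_V$ is meager, and its complement — which is exactly $R$ — is residual in $X$, completing the proof.
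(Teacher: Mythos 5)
Your proof is correct and follows essentially the same route as the paper: both reduce the density claim via a countable base to the sets $\bigl(X\setminus E^{-1}(P_V)\bigr)\cup E^{-1}(P_{V\cap B})$ and use Lemma~\ref{l:dense} (which is where openness of $E$ enters) together with openness of $E^{-1}(P_V)$; your showing that $N_V$ is nowhere dense is just the complementary phrasing of the paper's claim that each $O_n$ has dense interior.
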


\begin{proof}
  Let \(\{ V_n\mid {n\in \N}\}\) be a countable base for the topology
  of \(Y\).  Write
  \[
  O_n=\bigl(X\setminus E^{-1}(P_{V_n})\bigr) \cup E^{-1}(P_{V_n\cap
    B}).
  \]
  The boundary \(\partial E^{-1} (P_{V_n})\) is a meager set in \(X\)
  because \(E^{-1} (P_{V_n})\) is open in \(X\).  Since \(V_n\cap B\) is
  dense in \(V_n\), Lemma~\ref{l:dense} implies that \(E^{-1}(P_{V_n\cap
    B})\) is dense in \(E^{-1}(P_{V_n})\). Hence
  \[
    \bigl(X\setminus\overline{E^{-1}(P_{V_n})}\bigr) \cup E^{-1}(P_{V_n\cap B})
  \]
  is open and dense in \(X\setminus\partial E^{-1} (P_{V_n})\), and
  therefore the interior of \(O_n\) is open and dense in \(X\). This
  proves that \(\bigcap_{n\in \N} O_n\) is a residual subset of \(X\). If \(x\)
  is in \(\bigcap_{n\in \N}O_n\), then \(E(x)\cap B\) is dense in \(E(x)\),
  for otherwise there would be some \(V_n\) in the base  for which
  \(E(x)\cap B\cap V_n=\emptyset\) and \(E(x)\cap U_n\ne\emptyset\), which
  conflicts with the definition of \(O_n\).
\end{proof}

The following is a generalization of the Kuratowski-Ulam
Theorem~\cite[p.~222]{Kuratowski1958}.

\begin{theorem}[{Cf.\ {\cite[Theorem~8.41]{Kechris1995}}}]\label{t:K-U}
  Let \(E\) be a bi-continuous relation over the topological spaces
  \(X\) and \(Y\). Let \(Y\) be second
  countable, and let  \(A\subseteq Y\) have the Baire property. The
  following properties are satisfied:
  \begin{enumerate}[\textup{(}i\textup{)}]
  
  \item \(\forall^* x\in X\), \(A\cap E(x)\) has the Baire
    property in \(E(x)\);

  \item if \(A\) is meager in \(Y\), then, \(\forall^* x\in X\), \(A\cap
    E(x)\) is meager in \(E(x)\);

  \item if \(A\) is residual in \(Y\), then, \(\forall^* x\in X\),
    \(A\cap E(x)\) is residual in \(E(x)\).

  \end{enumerate}

 In addition, if \(X\) is a Baire space, \(E(X)\) is dense in
  \(Y\), and,  \(\forall^*x\in X\), \(E(x)\) is a
  Baire space,  the converses
  to~\textup{(}ii\textup{)} and~\textup{(}iii\textup{)} are also satisfied.
\end{theorem}

\begin{proof}
Lemma~\ref{l:open and dense} implies~(iii), which in turn implies~(ii). 

For~(i), if  \(A=U \triangle M\) for some meager set \(M\subseteq Y\) and some
open set \(U\subseteq Y\), then, for all \(x\in X\), \(A\cap E(x)\)
has the Baire property because
\[
A\cap E(x)=\bigl(U\cap E(x)\bigr) \triangle \bigl(M\cap E(x)\bigr),
\]
where \(U\cap E(x)\) is open in \(E(x)\), and, by~(ii),
\(\forall^*x\in X\), \(M\cap E(x)\) is meager in \(E(x)\).

Assume now that \(E(X)\) is dense in \(Y\) and that, \(\forall^*
  x\in X\), \(E(x)\) is a Baire space. Let \(A\) be a non-meager
subset of \(Y\) with the Baire property. Because of
\cite[Proposition~8.26]{Kechris1995}, there is a non-empty open
\(U\subseteq Y\) such that \(A\cap U\) is residual in \(U\); hence,
by~(iii), \(\forall^*x\in X\), \(A\cap U\cap E(x)\) is residual
in \(U\cap E(x)\). Because of
\cite[Proposition~8.22]{Kechris1995}, \(A\cap U\) has the Baire
property in \(X\), and thus in \(U\); hence, by~(i), 
  \(\forall^*x\in X\), \(A\cap U\cap E(x)\) has the Baire property in
\(U\cap E(x)\). Because \(E\) is continuous and \(E(X)\) is dense in
\(Y\), \(E^{-1}(P_U)\) is an open non-empty subset of
\(X\). Since, \(\forall^*x\in X\), \(E(x)\) is also a Baire
space, it follows from \cite[Proposition~8.26]{Kechris1995} that,
  \(\forall^*x\in E^{-1}(P_U)\), \(A\cap E(x)\) is not meager in
\(E(x)\).  Thus \(\exists^*x\in X\) such that \(A\cap E(x)\) is not
meager in \(E(x)\), by \cite[Proposition~8.26]{Kechris1995} since
  \(X\) is a Baire space. This proves the converse of~(ii), which in
turn implies the converse of~(iii).
\end{proof}

\begin{rem}\label{r:K-U}
  The classical Kuratovski-Ulam Theorem~(\textit{loc.~cit.},
  \textit{cf.}\ also \cite[Theorem~8.41]{Kechris1995}) is obtained
  from Theorem~\ref{t:K-U} in the case of Baire spaces by taking
  \(X=Y=X_1\times X_2\), where \(X_1\) and \(X_2\) are second countable
  Baire spaces, and \(E\) or \(E^{\op}\) equal to the
  equivalence relation whose equivalence classes are the fibers
  \(\{x_1\}\times X_2\) for \(x_1\in X_1\).
\end{rem}


\begin{cor}\label{c:K-U}
  Let \(X\) and \(Y\) be second countable Baire spaces, and let
  \( E \) be a bi-continuous relation over \(X\) and \(Y\). Suppose
  that: \(E\subseteq X\times Y\) is a Baire space, \(E(X)\) is dense in
  \(Y\), \(E^{\op}(Y)\) is dense in \(X\) and, \(\forall^*x\in X\),
  \(\forall^*y\in Y\), \(E(x)\) and \(E^{\op}(y)\) are Baire
  spaces. If \(F\subseteq X\times Y\) is such that \(F\cap E\) has the
  Baire property in \(E\), then, \(\forall^*y\in E(x)\), \(\forall^*x\in X\),
  \((x,y)\in F\) if and only if, \(\forall^*x\in E^{\op}(y)\),
    \(\forall^*y\in Y\), \((x,y)\in F\).
\end{cor}
  
\begin{proof} 
  Lemma~\ref{l:E cont. iff pi X:E to X open} implies that the
  restrictions of the projections \(\pi_X\) and \(\pi_Y\) to \(E\) are open
  mappings. Hence, by Example~\ref{ex:cont.  relation}-(i), their
  corresponding graphs, \(\Pi_{E,X} \subseteq E\times X\) and \(\Pi_{E,Y}
  \subseteq E\times Y\), are bi-continuous relations. Moreover, for \(x\in
  X\) and \(y\in Y\),
  \begin{gather*}
    \Pi_{E,X}^{\op}(x)=\{x\}\times E(x){\equiv E(x)},\\
    \Pi_{E,Y}^{\op}(y)= E^{\op}(y)\times\{y\}\equiv E^{\op}(y),\\
    A\cap\Pi_{E,X}^{\op}(x) =\{x\}\times(A\cap E)(x)\equiv(A\cap E)(x),\\
    A\cap\Pi_{E,Y}^{\op}(y) =(A\cap E)^{\op}(y)\times\{y\}\equiv(A\cap E)^{\op}(y),
  \end{gather*}
  and \(\Pi_{E,X}^{\op}(X)=\Pi_{E,Y}^{\op}(Y)=E\). Then, by Theorem~\ref{t:K-U} applied to \(E\), \(\Pi_{E,X}^{\op}\), \(\Pi_{E,Y}^{\op}\) and \(E^{\op}\),
  \begin{multline*}
    \forall^*y\in E(x),\ \forall^*x\in X,\ (x,y)\in F\\
    \begin{aligned}
      &\Longleftrightarrow\forall^*x\in X,\ (F\cap E)(x)\ \text{is residual in}\ E(x)\\
      &\Longleftrightarrow F\cap E\ \text{is residual in}\ E\\
      &\Longleftrightarrow\forall^*y\in Y,\ (F\cap E)^{\op}(y)\ \text{is residual in}\ E^{\op}(y)\\
      &\Longleftrightarrow\forall^*x\in E^{\op}(y),\
        \forall^*y\in Y,\ (x,y)\in F. \qedhere
    \end{aligned}
  \end{multline*}
\end{proof}

      
      
      

\begin{cor}\label{c:C} Let \(X\) and \(Y\) be second countable Baire
  spaces, and let \(E_n\subseteq X\times Y\) be countably many
  bi-continuous relations over \(X\) and \(Y\).
  The following properties hold:
  \begin{enumerate}[\textup{(}i\textup{)}]
      
  \item If \(A\subseteq X\) and \(B\subseteq Y\) are residual subsets,
    then there are residual subsets \(C\subseteq A\) and \(D\subseteq
    B\) such that, for all \(x\in C\), all \(y\in D\) and all
      \(n\in\N\), the set \(D\cap E_n(x)\) is residual in \(E_n(x)\)
    and \(C \cap E_n^{\op}(y)\) is residual in \(E_n^{op}(y)\).
      
  \item If \(X=Y\) and \(A\subseteq X\) is a residual subset, then there
    is a residual subset \(C\subseteq A\) such that, for all \(x\in C\)
    and all \(n\in\N\), \(C\cap E_n(x)\) is residual in \(E_n(x)\).
      
  \end{enumerate}
\end{cor}
  
\begin{proof}
  To prove~(i), define residual subsets, \(C_i\subseteq X\) and
  \(D_i\subseteq Y\), \(i\in \N\), by the following induction process on
  \(i\in\N\).  Set \(C_0=A\) and \(D_0=B\). Assuming that \(C_i\) and
  \(D_i\) have been defined, let
  \begin{align*}
    C_{i+1} &= \{\, x\in X\mid \text{\(\forall^*x\in X\), \(\forall n\in\N\), \(D_i\cap E_n(x)\) is residual in \(E_n(x)\)}\, \},\\
    D_{i+1} &= \{\, y\in Y\mid \text{\(\forall^*y\in Y\), \(\forall n\in\N\), \(C_i\cap E_n^{\op}(y)\) is residual in \(E_n^{\op}(y)\)}\,\}.
  \end{align*}
  By Theorem~\ref{t:K-U}, for all \( i\in \N\), \( C_i\) is residual
  in \(X\) and \(D_i\) is residual in \(Y\), and therefore \(
  C=\bigcap_{i\in \N}C_i \) is residual in \(A\) and \(D=\bigcap_{i\in
    \N}D_i\) is residual in \(B\) because \(A\) and \(B\) are
    dense in  \(X\) and \(Y\), respectively, since
    \(X\) and \(Y\) are Baire spaces.  Moreover, for all \(n\in \N\),
  all \( x\in C\) and all \(y\in D\), \(D\cap E_n(x)=\bigcap_{i\in \N}(D_i\cap
  E_n(x))\) is residual in \(E_n(x)\), and \(
  C\cap E_n^{\op}(y)=\bigcap_{i\in \N}(C_i\cap E_n^{\op}(y))\) is
  residual in \(E_n^{\op}(y)\).
  
To prove~(ii), let \(C_0=A\) and, assuming that \(C_i\) has been
defined, let
\[
    C_{i+1}=\{\, x\in X\mid \text{\(\forall^*x\in X\), \(\forall n\in\N\), \(C_i\cap E_n(x)\) is residual in \(E(x)\)}\, \}.
\]
By Theorem~\ref{t:K-U}, for all \( i\in \N\), \(C_i\) is residual in \(X\). 
Therefore \(C=\bigcap_{i\in \N}C_i\) is residual in \(A\)
  because \(A\) is dense in \(X\) since \(X\) is a Baire space, and, for all \(x\in C\) and all \(n\in\N\),
\( C\cap E_n(x)=\bigcap_{i\in \N} (C_i\cap E_n(x)) \) is residual in
\(E_n(x)\).
\end{proof}

\section{Classification and generic ergodicity}\label{s:classification}

Let \(X\) and \(Y\) be topological spaces, and let \(E\subseteq X\times X\)
and \(F\subseteq Y\times Y\) be equivalence relations. A mapping, \(\theta:X\to
Y\), is called \((E,F)\)-\textit{invariant} if
\[
  x E x' \Longrightarrow \theta(x) F \theta(x')
\]
for all \(x, x' \in X\). An \( (E,F)\)-invariant mapping \(\theta:X\to Y\)
induces a mapping, \(\bar\theta:X/E\to Y/F\), between quotient spaces.

The relation \(E\) is  \textit{Borel reducible} to \(F\),
written \(E\le_BF\), if there is an \((E,F)\)-invariant Borel mapping
\(\theta:X\to Y\) such that
\[
  x E x' \Longleftrightarrow \theta(x) F \theta(x')
\]
for all \(x, x' \in X\); \textit{i.e.}, such that the quotient mapping
\(\bar\theta:X/E\to Y/F\) is injective. If \(E\le_B F\) and \(F\le_BE\),
then \(E\) is said to be \textit{Borel bi-reducible} with \(F\), and is
denoted by \(E\sim_BF\).

The relation \(E\) is \textit{generically \(F\)-ergodic} if,
for any \((E,F)\)-invariant, Baire measurable mapping \(\theta:X\to Y\),
there is some residual saturated \(C\subseteq X\) such that
\(\bar\theta:C/(E\cap (C\times C))\to Y/F\) is constant.

\begin{rem}\label{r:generically F-ergodic}
  If \(E\) is a generically \(F\)-ergodic relation over \(X\), then every
  equivalence relation over \(X\) that contains \(E\) is also generically
  \(F\)-ergodic.
\end{rem}

The partial pre-order relation \(\le_B\) establishes a hierarchy on the
complexity of equivalence relations over topological spaces. Two key
ranks of this hierarchy are given by the following two concepts of
classification of relations. 

In the first one, \(E\) is said to be \textit{concretely classifiable}
(or \textit{smooth}, or \textit{tame}) if \(E\le_B\Delta_\R\) (the
identity relation on \(\R\)). This means that the equivalence classes
of \(E\) can be distinguished by some Borel mapping \(X\to\R\).

\begin{theorem}\label{t:Delta Y-ergodic} 
  Let \(X\) and \(Y\) be second countable topological spaces. If \(E\)
  is a continuous, topologically transitive equivalence relation over
  \(X\), then \(E\) is generically \(\Delta_Y\)-ergodic.
\end{theorem}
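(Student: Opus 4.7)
My plan is to produce, given an arbitrary $(E,\Delta_Y)$-invariant Baire measurable $\theta:X\to Y$, a residual $E$-saturated subset $C\subseteq X$ on which $\theta$ is constant. The strategy is to combine a residual set on which $\theta$ is continuous, a residual set on which the $E$-class is dense, and then a residual refinement (via Corollary~\ref{c:C}) on which the set meets each $E$-class residually; density plus continuity of $\theta|_C$ will then force constancy, and saturation is routine.

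For the first ingredient, since $\theta$ is Baire measurable and $Y$ is second countable, a standard argument with a countable base $\{U_n\}$ of $Y$---writing $\theta^{-1}(U_n)=W_n\triangle M_n$ with $W_n$ open and $M_n$ meager, and removing $\bigcup_n M_n$---supplies a residual $G\subseteq X$ on which $\theta|_G$ is continuous. For the second, since $E$ is topologically transitive and is a continuous equivalence relation (hence bi-continuous, as the target and source fibers coincide), Corollary~\ref{c:E(x) is dense in X} furnishes a residual $A\subseteq X$ with $E(x)$ dense in $X$ for every $x\in A$. For the third, I apply Corollary~\ref{c:C}(ii) with the single bi-continuous relation $E$ to the residual set $A\cap G$, obtaining a residual $C\subseteq A\cap G$ such that $C\cap E(x)$ is residual in $E(x)$ for every $x\in C$.

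Now fix $x\in C$: since $x\in A$, $E(x)$ is dense in $X$; since $C\cap E(x)$ is residual in $E(x)$, it is dense in $E(x)$; hence $C\cap E(x)$ is dense in $X$, and in particular dense in $C$. The $(E,\Delta_Y)$-invariance of $\theta$ forces $\theta\equiv\theta(x)$ on $E(x)$, and in particular on the dense subset $C\cap E(x)$ of $C$. Applying this to two arbitrary points $x_1,x_2\in C$, the restriction $\theta|_C$ (continuous because $C\subseteq G$) takes the constant value $\theta(x_1)$ on the dense set $C\cap E(x_1)$ and the constant value $\theta(x_2)$ on the dense set $C\cap E(x_2)$; a mild separation axiom on $Y$ (at worst $T_1$, and Polish in the applications) then forces $\theta(x_1)=\theta(x_2)$. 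Replacing $C$ by its $E$-saturation $E(C)$ keeps it residual, makes it saturated, and leaves $\theta$ constant by invariance, yielding the required set.

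The point I expect to be most delicate is the passage from ``$C\cap E(x)$ residual in $E(x)$'' to ``$C\cap E(x)$ dense in $E(x)$'': this uses that $E(x)$ is a Baire space, which is not part of the bare hypotheses but holds in the intended applications. If one wishes to avoid this, the standard alternative is to prove first a topological $0$-$1$ law, namely that every $E$-invariant subset of $X$ with the Baire property is meager or residual---using bi-continuity of $E$ (so that saturations of open sets are open) together with the existence of a dense $E$-class to rule out two disjoint non-empty open saturated sets modulo meager---and then to apply this law to each $\theta^{-1}(U_n)$ to locate a single $y_0\in Y$ whose automatically $E$-saturated preimage $\theta^{-1}(y_0)$ is the required residual set.
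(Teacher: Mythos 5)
Your first two ingredients coincide with the paper's proof (continuity of $\theta$ on a residual set via \cite[Theorem~8.38]{MR1321597}, and Corollary~\ref{c:E(x) is dense in X} for the residual set of points with dense class), but your main route then has exactly the gap you flag, and it is a real one: from Corollary~\ref{c:C}-(ii) you only know that $C\cap E(x)$ is \emph{residual} in $E(x)$, and without knowing that the class $E(x)$ is a Baire space this gives neither density in $E(x)$ nor even non-emptiness, while no Baire hypothesis on the classes (nor on $X$) appears in the statement of Theorem~\ref{t:Delta Y-ergodic}. The paper's proof avoids this by a different device: it takes the residual set $C_0$ on which $\theta$ is continuous to be \emph{saturated}, so that for $x\in C_0\cap C_1$ the whole dense class $E(x)$ lies inside $C_0$; then density of $E(x)$ in $X$ (hence in $C_0$), constancy of $\theta$ on $E(x)$, and continuity of $\theta|_{C_0}$ give constancy directly, with no appeal to Baireness of fibers. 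Your need for Baire fibers arises precisely because your continuity set $G$ is not saturated. (Your final saturation step $C\mapsto E(C)$ is fine, and the separation issue on $Y$ is minor: $T_0$ suffices and is implicit in the paper as well.)

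Your proposed fallback does not close the gap as written. The topological $0$--$1$ law is, in the paper, Corollary~\ref{c:0-1 law}, deduced \emph{from} Theorem~\ref{t:Delta Y-ergodic}, so you would have to prove it independently, and the sketch ``rule out two disjoint non-empty open saturated sets modulo meager'' does not follow from the stated ingredients: writing $S=W\triangle M$ and $X\setminus S=W'\triangle M'$ with $W,W'$ open and $M,M'$ meager, the sets $W,W'$ need not be saturated, their saturations $E(W),E(W')$ are both open and dense (each contains the dense class), so there is no disjointness to contradict; and one cannot pass from ``$S$ comeager in $W$'' to ``$S$ comeager in $E(W)$'' because the saturation of a meager set need not be meager. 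Making such an argument rigorous seems to require a Kuratowski--Ulam-type step (Theorem~\ref{t:K-U}), whose relevant converse direction again asks the classes to be Baire spaces. So either arrange, as the paper does, that the set of continuity of $\theta$ is saturated, or explicitly add the Baire hypothesis on the classes that your argument actually uses.
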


\begin{proof}
  Let \(\theta:X\to Y\) be \((E,\Delta_Y)\)-invariant and Baire
  measurable.  By \cite[Theorem~8.38]{Kechris1995}, \(\theta\) is
  continuous on some residual saturated set \(C_0\subseteq X\). By
  Corollary~\ref{c:E(x) is dense in X}, there is residual saturated
  \(C_1\subseteq X\) such that, for all \(x\in C_1\), 
  \(E(x)\) is dense in \(X\). Then \(C_0\cap C_1\) is a residual subset of \(X\) where \(\theta\)
  is constant.
\end{proof}

\begin{rem}
  In the above proof, if \(X\) is a Baire space, then \(C_0\cap
  C_1\neq\emptyset\).
\end{rem}

\begin{cor}[{Cf.\ {\cite[Theorem~3.2]{Hjorth2000}}}]\label{c:0-1 law} 
  Let \(X\) be a second countable space and let \(E\) be a continuous
  equivalence relation over \(X\). If \(E\) is topologically
  transitive, then  every \(E\)-saturated subset of \(X\) that has the
  Baire property is either residual or meager.
\end{cor}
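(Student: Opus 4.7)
The plan is to reduce the statement to an immediate application of Theorem~\ref{t:Delta Y-ergodic} with the simplest possible target, namely $Y=\2=\{0,1\}$. Given an $E$-saturated subset $A\subset X$ with the Baire property, I would consider its characteristic function $\chi_A:X\to\2$ and invoke generic $\Delta_\2$-ergodicity.

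First, I would verify that $\chi_A$ satisfies the hypotheses of Theorem~\ref{t:Delta Y-ergodic}. Because $A$ is $E$-saturated, $xEx'$ forces $x\in A\iff x'\in A$, so $\chi_A(x)=\chi_A(x')$, i.e.\ $\chi_A$ is $(E,\Delta_{\2})$-invariant. Because $A$ has the Baire property, and the only non-trivial open subsets of $\2$ are $\{0\}$ and $\{1\}$, whose preimages under $\chi_A$ are $X\setminus A$ and $A$ respectively, both preimages have the Baire property; hence $\chi_A$ is Baire measurable. The target $\2$ is trivially second countable, $X$ is second countable by hypothesis, and $E$ is a continuous, topologically transitive equivalence relation on $X$, so all the hypotheses of Theorem~\ref{t:Delta Y-ergodic} are met.

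Applying that theorem, I obtain a residual $E$-saturated subset $C\subset X$ on which $\chi_A$ is constant. If $\chi_A\equiv 1$ on $C$, then $C\subset A$, so $A$ contains a residual set and is itself residual. If $\chi_A\equiv 0$ on $C$, then $A\cap C=\emptyset$, so $A\subset X\setminus C$, which is meager as the complement of a residual set. In either case the required dichotomy follows. I do not anticipate a serious obstacle: the entire argument amounts to packaging a set with the Baire property as a Baire-measurable $\2$-valued function and quoting the preceding theorem.
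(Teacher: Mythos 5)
Your proof is correct and is essentially the paper's own argument: the paper likewise proves the corollary by applying Theorem~\ref{t:Delta Y-ergodic} to the characteristic function $X\to\{0,1\}$ of the saturated set. You merely spell out the routine verifications (invariance, Baire measurability, and the final dichotomy) that the paper leaves implicit.
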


\begin{proof}
  Apply Theorem~\ref{t:Delta Y-ergodic} to the characteristic function
  of the given subset.
\end{proof}

\begin{cor}\label{c:E is not concretely classifiable} 
  Let \(X\) be a second countable Baire space and let \(E\) be a
  continuous equivalence relation over \(X\). If \(E\) is
  topologically transitive and its equivalence classes are meager
  subsets of \(X\), then \(E\) is not concretely classifiable.
\end{cor}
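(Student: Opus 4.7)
The plan is to derive a contradiction from the assumption that $E$ is concretely classifiable, using Theorem~\ref{t:Delta Y-ergodic} (with $Y = \R$) as the main tool together with the Baire space hypothesis on $X$.

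First, I would suppose $E \le_B \Delta_\R$, so there is an $(E,\Delta_\R)$-invariant Borel map $\theta : X \to \R$ whose induced map $\bar\theta : X/E \to \R$ is injective. Since Borel maps are Baire measurable and the hypotheses of Theorem~\ref{t:Delta Y-ergodic} are satisfied (taking $Y = \R$), the equivalence relation $E$ is generically $\Delta_\R$-ergodic. Applied to the map $\theta$, this yields a residual $E$-saturated subset $C \subset X$ such that the induced map $\bar\theta : C/(E\cap(C\times C)) \to \R$ is constant; equivalently, $\theta$ takes a single value $r_0$ on $C$.

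Next I would use the injectivity of $\bar\theta$ on $X/E$: the preimage $\theta^{-1}(r_0)$ consists of a single $E$-equivalence class, call it $[x_0]$. Therefore $C \subset [x_0]$, and by hypothesis $[x_0]$ is meager in $X$, so $C$ is meager as well.

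The contradiction now comes from the Baire hypothesis: in a nonempty Baire space, no residual set can be meager, since otherwise $X = C \cup (X\setminus C)$ would be a union of two meager sets and hence meager itself, but then $\emptyset$ would be a residual subset of $X$ that is not dense, contradicting the Baire property. Thus $C$ cannot simultaneously be residual in $X$ and contained in the meager class $[x_0]$, which contradicts the assumption $E \le_B \Delta_\R$ and completes the proof. The argument contains no real obstacle; it is essentially an immediate specialization of Theorem~\ref{t:Delta Y-ergodic} to the case $Y=\R$, with the Baire hypothesis playing the role of ensuring the residual witness set is genuinely large.
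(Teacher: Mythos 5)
Your proposal is correct and follows the paper's own argument: apply Theorem~\ref{t:Delta Y-ergodic} with $Y=\R$ to get a residual saturated set on which $\theta$ is constant, then use injectivity of $\bar\theta$, the meagerness of the classes, and the Baire property of $X$ to reach a contradiction. You have merely spelled out in detail the steps the paper compresses into two sentences, so there is nothing to add.
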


\begin{proof}
  By Theorem~\ref{t:Delta Y-ergodic}, each \((E,\Delta_\R)\)-invariant
  Borel map \(\theta:X\to\R\) is constant on some residual saturated
  subset of \(X\). So \(\bar\theta:X/E\to\R/\Delta_\R\equiv\R\) cannot be
  injective because \(X\) is a Baire space and the equivalence
  classes are meager.
\end{proof}

The second classification concept can be defined by using
\(\prod_{n=1}^\infty\2^{\N^n}\) endowed with the product topology, which
is a Polish space. Each element of \(\prod_{n=1}^\infty\2^{\N^n}\) can be
considered as a structure on \(\N\) defined by a sequence \((R_n)\), where
each \(R_n\) is a relation over \(\N\) with arity \(n\). Two such structures
are isomorphic when they correspond by some permutation of \(\N\), which
defines the isomorphism relation \(\cong\) over
\(\prod_{n=1}^\infty\2^{\N^n}\). Then a relation \(E\) is
\textit{classifiable by countable structures} (or \textit{models}) if
\(E \le_B {\cong}\). This means that there is some Borel map
\(\theta:X\to\prod_{n=1}^\infty\2^{\N^n}\) such that \(x E x'\) if and only
if \(\theta(x) \cong \theta(x')\). Here, it is also possible to use the
structures on \(\N\) defined by arbitrary countable relational
languages, \textit{cf.}~\cite[Section~2.3]{Hjorth2000}.

The equivalence relation defined by the action of a group \(G\) on a set
\(X\) will be denoted by \(E_G^X\); in this case, the notation \(\OO(x)\)
will be used for the orbit of each \(x\in X\) instead of \(E_G^X(x)\). If
\(G\) is a Polish group, the set of all relations defined by
continuous actions of \(G\) on Polish spaces has a maximum with respect
to \(\le_B\), which is unique up to \(\sim_B\) and is denoted by
\(E_G^\infty\) \cite{BeckerKechris1996,KechrisMiller2004}.

As a special example, the group \(S_\infty\) of permutations of \(\N\)
becomes Polish with the topology induced by the product topology of
\(\N^\N\), where \(\N\) is considered with the discrete topology. Then the
canonical action of \(S_\infty\) on \(\prod_{n=1}^\infty\2^{\N^n}\)
defines the isomorphism relation \(\cong\) over the space of countable
structures, which is a representative of \(E_{S_\infty}^\infty\)
\cite{Hjorth2000}.

Classification by countable structures and generic ergodicity are well
understood for equivalence relations defined by Polish actions in
terms of a dynamical concept called \textit{turbulence} which was
introduced by Hjorth~\cite{Hjorth2000}.

\section{Turbulent uniform relations}\label{s:turbulent uniform}

A \textit{uniform equivalence relation}, or simply a \textit{uniform
  relation}, over a set, \(X\), is a pair, \((\VV, E)\), consisting of a
uniformity \(\VV\) on \(X\) and an equivalence relation \(E\) over \(X\) such
that \(E\in\VV\). Note that \((\VV,E)\) is determined by the entourages
(members of \(\VV)\) that are contained in \(E\), and that \(\VV\) induces a
uniform structure on each equivalence class of \(E\).

One important example of a uniform relation is that given by the
action of a topological group, \(G\), on a set, \(X\). This is of the form
\((\VV, E_G^X)\), where \(\VV\) is the uniform structure on \(X\) generated
by the entourages 
\begin{equation}\label{V_W} V_W=\{\,(x,gx) \mid x\in
  X,\ g\in W\,\},
\end{equation}
where \(W\) belongs to the neighborhood system of the identity of
\(G\). Thus a uniform relation over a topological space can be considered
as a generalized dynamical system.

Another important example of uniform relation is the following. A
\textit{metric} (or \textit{distance function}) \textit{with possible
  infinite values} on a set is a function \(d:X\times X\to
[0,\infty]\) satisfying the usual properties of a metric (\(d\) is
symmetric, equals \(0\) just on the diagonal of \(X\times X\) and
satisfies the triangle inequality). It defines an equivalence
relation, \(E_d^X\), over \(X\) given by \(x\,E_d^X\,y\) if and only
if \(d(x,y)<\infty\). There is a uniform relation induced by \(d\) of
the form \((\VV,E_d^X)\), where a base of \(\VV\) consists of the
entourages
\begin{equation}\label{V_epsilon}
    V_\epsilon = \{ \,(x,y)\in X\times X\mid d(x,y)<\epsilon\,\}.
\end{equation}
The term \textit{metric equivalence relation} (or \textit{metric
  relation}) will be used for the pair \((d,E_d^X)\) (or even for \(d\)).
Like the usual metrics, metrics with possible infinite values induce a
topology which has a base of open sets consisting of open balls;
unless otherwise indicated, the ball of center \(x\) and radius \(R\) will
be denoted by \(B_X(x,R)\) or \(B_d(x,R)\), or simply by \(B(x,R)\).

\begin{rem}
  Other generalizations of metrics also define uniform relations, like
  \textit{pseudo-metrics with possible infinite values}, defined in
  the obvious way, or when the triangle inequality is replaced by the
  condition \(d(x,y)\le \rho\bigl( d(x,z) + d(z,y)\bigr)\) for some
  \(\rho>0\) and all \(x,y,z\in X\) (\textit{generalized pseudo-metrics
    with possible infinite values}). They give rise to the concepts of
  \textit{pseudo-metric relation} and \textit{generalized
    pseudo-metric relation}.
\end{rem}

\begin{rem}\label{r:generalized pseudo-metric relations}
  Let \(d\) and \(d'\) be metric relations over \(X\) that
  induce respective uniform relations \((\VV,E)\) and \((\VV',E')\). If
  \(d'\le d\), then \(\VV\subseteq\VV'\) and \(E\subseteq E'\).
\end{rem}

\begin{defn}[{Cf.\ \cite[Definition~3.15]{Hjorth2000}}]\label{d:local equiv class} 
  Let \((\VV,E)\) be a uniform relation over a topological space
  \(X\). For any non-empty open \(U\subseteq X\) and any \(V\in\VV\) with
  \(V\subseteq E\), the set
    \[
      E(U,V)=\bigcup_{n=0}^\infty(V\cap(U\times U))^n
    \]
  is an equivalence relation over \(U\) called a \textit{local
   equivalence relation}. The \(E(U,V)\)-equivalence class of any \(x\in
  U\) is called a \textit{local equivalence class} of \(x\), and is denoted
  by \(E(x,U,V)\).
\end{defn}

For a relation given by the action of a group \(G\) on a space \(X\), the
local equivalence classes are called \textit{local orbits} in
Hjorth~\cite{Hjorth2000}, and the notation \(\OO(x,U,W)\) is used instead
of \(E_G^X(x,U,V)\) when \(V=V_W\) according to~\eqref{V_W}. Similarly,
for a uniform relation induced by a generalized pseudo-metric \(d\) on a
set \(X\), the notation \(E_d^X(x,U,\epsilon)\) is used instead of
\(E_d^X(x,U,V)\) when \(V=V_\epsilon\) according to~\eqref{V_epsilon}.

\begin{defn}[{Cf.~\cite[Definition~3.13]{Hjorth2000}}]\label{d:turbulent}
  A uniform relation is called \textit{turbulent} if:
    \begin{enumerate}[(i)]
  
      \item every equivalence class is dense,
  
      \item every equivalence class is meager, and
  
      \item every local equivalence class is somewhere dense.
  
      \end{enumerate}
    \end{defn}

\begin{rem}\label{r:turbulent}
  Definition~\ref{d:turbulent} does not correspond exactly to the
  definition of turbulence introduced by Hjorth for Polish actions
  \cite[Definition~3.13]{Hjorth2000}. To generalize exactly Hjorth's
  definition, condition~(iii) of Definition~\ref{d:turbulent} should
  be replaced with condition~(iii'):
    \begin{itemize}
    
    \item[(iii')] every equivalence class meets the closure of each
      local equivalence class.
    
    \end{itemize}
    In fact,~(i) already follows from~(iii'). In the case of Polish
    actions,~(iii) and~(iii') can be interchanged in the definition of
    turbulence by \cite[Lemmas~3.14 and~3.16]{Hjorth2000}; thus
    Definition~\ref{d:turbulent} generalizes Hjorth's definition. But
    in our setting, that equivalence is more delicate and our
    results become simpler by using~(iii).
\end{rem}

\begin{rem}\label{r:local equivalence equivalence classes}
  Let \((\VV,E)\) and \((\VV',E')\) be uniform relations over a topological
  space \(X\) such that \(\VV\subseteq\VV'\) and \(E\subseteq E'\). If the local
  equivalence classes of \((\VV,E)\) are somewhere dense
  (Definition~\ref{d:turbulent}-(iii)), then the local equivalence
  classes of \((\VV',E')\) are also somewhere dense.
\end{rem}

\begin{example}
  The following simple examples illustrate the generalization of the
  concept of turbulence for uniform relations.
    \begin{enumerate}[(i)]
  
    \item If \(E\) is an equivalence relation over a topological space
      \(X\), then  \(\VV=\{\,V\subseteq X\times X \mid E\subseteq V\,\}\)
      is a uniformity on \(X\), and \((\VV,E)\) is a uniform
      relation. Therefore \(E\) is the only entourage of \(\VV\) contained
      in \(E\), and \(E(x,U,E)=E(x)\cap U\) for any open \(U\subseteq X\) and
      all \(x\in U\), so it follows that \((\VV,E)\) is turbulent if the
      equivalence classes of \(E\) are dense and meager.
    
    \item Let \(G\) be a first countable topological group whose
      topology is induced by a right invariant metric \(d_G\). Suppose that \(G\) acts continuously on
      the left on a topological space \(X\). Then this action
      induces a pseudo-metric relation \(d\) on \(X\) with
      \(E_d^X=E_G^X\) and
     \[
          d(x,y)=\inf\{\,d_G(1_G,g)\mid g\in G,\ gx=y\,\}
      \]
      for \((x,y)\in E_G^X\), where \(1_G\) denotes the identity element
      of \(G\). The pseudo-metric relation \(d\) induces the same uniform relation
      as the action of \(G\) on \(X\), and therefore \(d\) is turbulent
      if and only the action is turbulent.
      
    \item Let \(\Z\) be the additive group of integers with the
      discrete topology, and let \(G\subseteq\Z^\N\) denote the topological
      subgroup consisting of the sequences \((x_n)\) such that \(x_n=0\)
      for all but finitely many \(n\in\N\). For some fixed irrational
      number \(\theta\), consider the continuous action of \(G\) on the
      circle \(S^1 \equiv \R/\Z\) given by
      \((x_n)\cdot[r]=[r+\theta\sum_nx_n]\), where \([r]\) is the element
      of \(S^1\) represented by \( r \in \R\). The orbits of this action
      are dense and countable. For each \(N\in\N\), the sets
      \[
      W_N=\{\,(x_n)\in G\mid\forall n\in\{0,\dots,N\},\ x_n=0\,\}
      \]
      are open and closed subgroups of \(G\) which form a base of neighborhoods
      of the identity element. The induced action of each \(W_N\) on
      \(S^1\) has the same orbits as \(G\); so
      \(\OO([r],U,W_N)=U\cap\OO([r])\) for all open \(U\subseteq S^1\)
      and each \([r]\in U\). It follows that this action is
      turbulent. In fact, the uniform equivalence relation induced by
      this action is of the type described in~(i): we have
      \(E_G^{S^1}\subseteq V\) for each entourage \(V\).  Moreover, for
      any invariant metric on \(G\), the induced pseudo-metric
      relation \(d\) on \(S^1\) is determined by \(d([r],[s])=\infty\)
      if \(\OO([r])\neq\OO([s])\) and \(d([r],[s])=0\)
      if \(\OO([r])=\OO([s])\). However, the action of \(G\) on
      \(S^1\) given by \((x_n)\cdot[r]=[r+\theta x_0]\) has the same
      orbits but is not turbulent: each point is a local orbit. Indeed
      this second action induces the same uniform equivalence relation
      as the action of \(\Z\) given by \(x\cdot[r]=[r+\theta x]\),
      which is not turbulent because \(\Z\) is locally compact.
  
    \end{enumerate}
\end{example}

\begin{defn}[{Cf.\ {\cite[Definition~3.20]{Hjorth2000}}}]\label{d:generically turbulent} 
  A uniform relation
  \((\VV,E)\) on a space \(X\) is \textit{generically turbulent} if:
    \begin{enumerate}[(i)]
  
    \item \(\forall^*x\in X\), the equivalence class of \(x\) is dense
      in \(X\),
  
    \item every equivalence class is meager,
      and
  
    \item \(\forall^*x\in X\), any local equivalence class of \(x\)
      is somewhere dense.
  
    \end{enumerate}
  \end{defn}
  
A metric relation is called (\textit{generically}) \textit{turbulent} if the induced uniform relation is (generically) turbulent.

\section{Turbulence and generic ergodicity}\label{s:Turbulence and generic ergodicity}

From now on, only metric relations over topological spaces will be
considered because that suffices for the applications given in this
paper. Some restriction on the topological structure of the space, and
some compatibility of that structure with the metric relation will be
required, and these are given in the following definition; they are
restrictive enough to prove the desired results, and general enough to
be satisfied in the applications.

 \begin{defn}\label{d:type I}
   A metric relation \(d\) on a space \(X\) is said to
   be of \textit{type~I} if:
  \begin{enumerate}[(i)]
      
  \item \(X\) is Polish;
      
  \item the topology induced by \(d\) on \(X\) is finer or equal than the
    topology of \(X\); and
      
  \item there is a set \(\EE=\{E_n\ |\ n\in\Z\}\) of relations over \(X\), with
    \(E_m\subseteq E_n\) if \(m\le n\), and such that:
    \begin{enumerate}[(a)]
    
    \item each \(E_n\in\EE\) is symmetric,
          
    \item each \(E_n\in\EE\) is a \(G_\delta\) subset of \(X\times X\),
    
    \item for each \(r>0\), there are some \(m\le n\) in \(\Z\) so that, for all \(x\in X\),
      \[
      E_m(x)\subseteq B_d(x,r)\subseteq E_n(x),
      \]
    
    \item for each \(n\in\Z\), there are \(r, s > 0\) such that, for
      all \(x\in X\),
      \[
      B_d(x,r)\subseteq E_n(x)\subseteq B_d(x,s),
      \]
          
    \item each \(E_n\in\EE\) is continuous, and
      
    \item for all \(k,m,n\in\Z\) and all \(x\in X\), if \(E_k\circ
        E_m\supseteq E_n\), then \(E_k\cap(E_m(x)\times E_n(x))\) is an
      open relation over \(E_m(x)\) and \(E_n(x)\).
  
    \end{enumerate}
  \end{enumerate}
\end{defn}

\begin{rem}\label{r:type I}
  In Definition~\ref{d:type I}, observe the following:

  \begin{enumerate}[(i)]
    
  \item Each \(E_n\in\EE\) is a \(G_\delta\) subset of \(X\)
    and, for each \(x\in X\), \(E_n(x)\equiv
    E_n\cap(\{x\}\times X)\) is a \(G_\delta\) subset of
    \(X\equiv X\times\{x\}\). Therefore, by
    \cite[Theorem~3.11]{Kechris1995}, \(E_n\) and
    \(E_n(x)\) are Polish subspaces of \(X\times X\) and \(X\),
    respectively; in particular, they are Baire spaces.
    
  \item Since \(E_d^X=\bigcup_{E\in\EE}E\), a metric relation of
    type~I is continuous by Lemma~\ref{l:union of cont and bi-cont
      rels}; however, its fibers need not be Polish spaces.
     
  \item By properties~(iii)-(a),(f), for all \(k,m,n\in\Z\) and all
    \(x\in X\), if \(E_k\circ E_n\supseteq E_m\), then
    \(E_k\cap(E_m(x)\times E_n(x))\) is a continuous relation over
    \(E_m(x)\) and \(E_n(x)\).
     
  \item It will become clear that the general results presented in
    this paper hold if the metric equivalence relation is of
    type~I only on some dense \(G_\delta\) subset. For the sake of
    simplicity, that generality is avoided since the
    conditions of Definition~\ref{d:type I} are satisfied in
    the applications to be given.
    
  \item Every \(E_k\) contains the diagonal \(\Delta_X\) by
      Definition~\ref{d:type I}-(iii)-(d). So \(E_k\circ E_l\supseteq
      E_l\), for all \(k,l\in\Z\).
        
  \end{enumerate}
\end{rem}

\begin{lemma}\label{l: type I, with EE'}
  Definition~\ref{d:type I}-(iii) holds if and only if there is a set of
  relations \(\EE'\) over \(X\) such that: 
    \begin{enumerate}[(a')]
    
    \item each \(E\in\EE'\) is symmetric,
          
    \item each \(E\in\EE'\) is a \(G_\delta\) subset of \(X\times X\),
    
    \item for each \(r>0\), there are some \(E,F\in\EE'\) so
      that, for all \(x\in X\), 
      \[
      E(x)\subseteq B_d(x,r)\subseteq F(x),
      \]
    
    \item for each \(E\in\EE'\), there are some \(r, s > 0\) so that
      \[
      B_d(x,r)\subseteq E(x)\subseteq B_d(x,s)
      \]
      for all \(x\in X\),
          
    \item each \(E\in\EE'\) is continuous, and
      
    \item for all \(E,F,G\in\EE'\) and \(x\in X\), if \(E\circ F\supseteq G\)
      then \(E\cap(F(x)\times G(x))\) is an open relation over \(F(x)\) and
      \(G(x)\).
  
    \end{enumerate}
\end{lemma}

\begin{proof}
  If \(\EE\) satisfies~(a)--(f), then it also satisfies~(a')--(f').

  Reciprocally, if~(a')--(f') are satisfied by \(\EE'\), then~(a)--(f)
    are satisfied by \(\EE=\{\,E_n\mid n\in\Z\,\}\), where each
    $E_n$ is chosen in $\EE'$ so that
  \begin{alignat*}{2}
  B_d(x,n)&\subseteq E_n(x)\subseteq B_d(x,n+1)&\quad&\text{for integers \(n\ge0\)},\\ 
  B_d(x,\textstyle{\frac{1}{-n+1}})&\subseteq E_n(x)\subseteq B_d(x,\textstyle{\frac{1}{-n}})&\quad&\text{for integers \(n<0\)}.\qed
  \end{alignat*}
\renewcommand{\qed}{}
\end{proof}

\begin{rem}\label{r: type I, with EE'}
  The variant of Definition~\ref{d:type I}, with the \(\EE'\)
    given by Lemma~\ref{l: type I, with EE'} replacing \(\EE\), will
    be useful in the applications.
\end{rem}

\begin{lemma}[{Cf.\ {\cite[Lemma~3.17]{Hjorth2000}}}]\label{l:theta}
  Let \(d\) be a metric relation of type~I over a space \(X\), and let
  \(\EE=\{\,E_n\mid n\in\Z\,\}\) be a sequence of subsets \(E_n\subseteq
  X\times X\)
  satisfying the conditions of Definition~\ref{d:type I}. Let \(G\) be a Polish
  group and let \(Y\) be a Polish \(G\)-space. If \(\theta:X\to Y\) is an
  \((E^X_d,E^Y_G)\)-invariant Borel map, then, for any neighborhood \(W\)
  of the identity element \(1_G\) in \(G\), \(\forall\ell\in\Z\),
  \(\forall^*x\in X\), and \(\forall^*x'\in E_\ell(x)\), there is some open
  neighborhood \(U\) of \(x\) in \(X\) such that, \(\forall k\in\Z\) and
  \(\forall^* x''\in U\cap E_k(x)\cap E_\ell(x')\), \(\exists g\in W\) so
  that \(g\cdot\theta(x)=\theta(x'')\). 
\end{lemma}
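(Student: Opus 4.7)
The plan is to combine the Baire-measurability of $\theta$ with Theorem~\ref{t:K-U} applied to the bi-continuous relations in $\EE$, and to use continuity of the Polish action $G\curvearrowright Y$ together with an Effros-type argument to translate topological proximity in $X$ into $W$-proximity in the orbit. It suffices to prove the conclusion for an open symmetric $W$ drawn from a countable base of neighborhoods of $1_G$ and for each $\ell\in\Z$, since intersecting the residual sets so obtained yields the full statement.

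Since $\theta$ is Borel hence Baire-measurable, by \cite[Theorem~8.38]{MR1321597} there is a dense $G_\delta$ set $X_0\subset X$ on which $\theta$ is continuous. Each $E_\ell$ is symmetric and continuous by Definition~\ref{d:type I}(iii)(a),(e), hence bi-continuous, and has Polish (so Baire) fibers by Remark~\ref{r:type I}(ii). Applying Theorem~\ref{t:K-U}(iii), the set $A_\ell=\{x\in X : X_0\cap E_\ell(x)\text{ is residual in }E_\ell(x)\}$ is residual in $X$, so the conclusion need be verified only for $x$ in the residual set $X_0\cap\bigcap_\ell A_\ell$ and, for each such $x$, for $x'$ in the residual set $X_0\cap E_\ell(x)$.

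For such $(x,x')$ I would build $U$ as follows. By $(E_d^X,E_G^Y)$-invariance, every $x''$ in the $E_d^X$-class of $x$ satisfies $\theta(x'')\in G\cdot\theta(x)$. An Effros-type argument for the continuous action of the Polish group $G$ on the Polish space $Y$ (restricting, if necessary, to a further residual set of $x$ for which the orbit $G\cdot\theta(x)$ is $G_\delta$ in $Y$, so that $W\cdot\theta(x)$ is relatively open in the orbit) produces an open neighborhood $N$ of $\theta(x)$ in $Y$ with $N\cap G\cdot\theta(x)\subset W\cdot\theta(x)$. Continuity of $\theta|_{X_0}$ at $x$ then supplies an open $U\ni x$ with $\theta(U\cap X_0)\subset N$, so that $\theta(x'')\in W\cdot\theta(x)$ for every $x''\in U\cap E_k(x)\cap E_\ell(x')\cap X_0$ and every $k\in\Z$.

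The main obstacle is to extend this conclusion from the $X_0$-slice to \emph{every} $x''\in U\cap E_k(x)\cap E_\ell(x')$. For this I would apply Theorem~\ref{t:K-U}(iii) a second time: by Definition~\ref{d:type I}(iii)(f) together with Remark~\ref{r:type I}(iv), the restriction $E_k\cap(E_\ell(x')\times E_k(x))$ is bi-continuous over Polish fibers, so $X_0$ is residual, hence dense, in each slice $U\cap E_k(x)\cap E_\ell(x')$. Then, working simultaneously with a decreasing countable base $(W_m)$ of open symmetric neighborhoods of $1_G$ satisfying $\overline{W_m}\subset W_{m-1}$, and running the previous argument for each $W_m$ on a single residual set of $(x,x')$, a closure/limit argument along sequences in $X_0$ approximating $x''$ is meant to force $\theta(x'')$ into $W\cdot\theta(x)$. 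Justifying this closure step rigorously — reconciling Polishness of $G$ and $Y$, the Effros-type orbit structure, and the compatibility hypothesis on $\EE$ — is the delicate crux of the proof.
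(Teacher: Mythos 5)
The crux of your proposal is the Effros-type step, and that is where it breaks down. To produce an open $N\ni\theta(x)$ with $N\cap G\cdot\theta(x)\subset W\cdot\theta(x)$ you need $W\cdot\theta(x)$ to be relatively open in the orbit, which by Effros's theorem is equivalent to the orbit $G\cdot\theta(x)$ being $G_\delta$ (equivalently, non-meager in itself). Nothing in the hypotheses gives you a residual set of $x$ with this property: the lemma must hold for an arbitrary invariant Borel $\theta$ into an arbitrary Polish $G$-space, and in exactly the situations where it is applied (e.g.\ a hypothetical reduction as in Proposition~\ref{p:E_d^X not le_B E^Y_G}, or targets whose orbits are dense and meager, hence never $G_\delta$) the set of $x$ for which $G\cdot\theta(x)$ is $G_\delta$ can be empty. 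So ``restricting to a further residual set'' is not available, and the open set $N$ need not exist for any $x$. A second, smaller problem is your closing ``closure/limit argument'' upgrading the conclusion from the dense slice $X_0$ to \emph{all} $x''$: you acknowledge it is not carried out, and with only Baire measurability of $\theta$ there is no control of $\theta$ at an arbitrary $x''$, so no limiting argument along points of $X_0$ can force $\theta(x'')$ into $W\cdot\theta(x)$. What one can realistically obtain, and what is actually used later (Corollary~\ref{c:theta}, Claim~\ref{cl:C}), is the statement for residually many $x''$.

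The paper's proof avoids both difficulties and does not use continuity of $\theta$ on a dense $G_\delta$ at all. Choose a symmetric open $W'$ with ${W'}^2\subset W$ and, by separability of $G$, write $G=\bigcup_i W'g_i$. By invariance, $\theta(E_\ell(x))\subset\bigcup_i W'g_i\cdot\theta(x)$; each set $W'g_i\cdot\theta(x)$ is analytic, so its $\theta$-preimage in the Polish fiber $E_\ell(x)$ is analytic, hence has the Baire property. This yields open sets $O_i\subset E_\ell(x)$ with dense union and comeager $C_i\subset O_i$ with $\theta(C_i)\subset W'g_i\cdot\theta(x)$; refining $C_i$ to $D_i$ via Definition~\ref{d:type I}-(iii)-(f), Remark~\ref{r:type I}-(iv) and Corollary~\ref{c:C}-(ii) makes $E_k(x')\cap D_i$ residual in $E_k(x')\cap O_i$ for $x'\in D_i$. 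Then for $x'\in D_i$, any $U$ with $U\cap E_\ell(x)\subset O_i$ works: for residually many $x''\in U\cap E_k(x')\cap E_\ell(x)$ both $\theta(x')$ and $\theta(x'')$ lie in the \emph{same} set $W'g_i\cdot\theta(x)$, so $\theta(x'')\in W'{W'}^{-1}\cdot\theta(x')\subset W\cdot\theta(x')$; no openness of the orbit map is ever needed. Finally, the quantifiers and the roles of $x$ and $x'$ are exchanged using Corollary~\ref{c:K-U} applied to the symmetric bi-continuous relation $E_\ell$, which is the step your first Kuratowski--Ulam reduction was implicitly trying to play. If you want to repair your write-up, replace the Effros step by this covering-and-Baire-category argument inside the fiber $E_\ell(x)$, and aim for ``$\forall^*x''$'' rather than ``$\forall x''$''.
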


\begin{proof}
  Fix an open neighborhood \(W\) of \(1_G\) in \(G\).  The result follows
  from Corollary~\ref{c:K-U} and the following Claim~\ref{cl:theta}.
  
  \begin{claim}\label{cl:theta}
    \(\forall\ell\in\Z\), \(\forall x\in X\) and \(\forall^*x'\in
    E_\ell(x)\), there exists some open neighborhood \(U\) of \(x'\) in \(X\)
    such that, \(\forall k\in\Z\) and \(\forall^*x''\in U\cap E_k(x')\cap
    E_\ell(x)\), \(\exists g\in W\) so that
    \(g\cdot\theta(x')=\theta(x'')\).
  \end{claim}

  To prove this claim, let \(W'\) be a symmetric open neighborhood of the
  identity \(1_G\in G\) such that \({W'}^2\subseteq W\). Since \(G\) is a
  Polish group, there are countably many elements \( g_i\in G\),
  \(i\in\N\), such that \(G\subseteq \bigcup_{i\in \N} W'
  g_i\). Therefore, given \(\ell\in\Z\) and \(x\in X\), the set
  \(\theta(E_\ell(x))\subseteq \bigcup_{i\in \N}
  W'g_i\cdot\theta(x)\). The preimage of \(W'g_i\cdot\theta(x)\) via the
  mapping \( \theta : E_\ell(x)\to Y\) is analytic in \( E_\ell(x)\)
  because \(W'g_i\cdot \theta(x)\) is analytic
  \cite[Proposition~14.4-(ii)]{Kechris1995}. Hence it has the Baire
  property \cite[Theorem~21.6]{Kechris1995}, and so there are open subsets
  \(O_i\subseteq E_\ell(x)\) and residual subsets \(C_i\subseteq O_i\) such
  that \(\bigcup_iO_i\) is dense in \(E_\ell(x)\) and \(\theta(C_i)\subseteq
  W'g_i\cdot\theta(x)\). By using Definition~\ref{d:type I}-(iii)-(f)
  and Remark~\ref{r:type I}-(iii),(v) applied to the relation
  \(E_k\cap(E_\ell(x)\times E_\ell(x))\) over \(E_\ell(x)\), and by
  Corollary~\ref{c:C}-(ii) and Example~\ref{ex:cont. relation}-(iii),
  it follows that there is some residual \(D_i\subseteq C_i\) such that
  \(E_k(x')\cap D_i\) is residual in \(E_k(x')\cap O_i\) for all \(x'\in
  D_i\) and \(k\in\Z\).

  The union \( A= \bigcup_iD_i\) is residual in \(E_\ell(x)\).  If
  \(x'\in A\), then \(x'\in D_i\) for some \( i\) and so
  \(\theta(x')=g'g_i\cdot\theta(x)\) for some \(g'\in W'\). Let \(U\)
  be any open neighborhood of \(x'\) in \(X\) so that \(U\cap
  E_\ell(x)\subseteq O_i\). Then, \(\forall k\in\N\), \(U\cap
  E_k(x')\cap D_i\) is residual in \(U\cap E_k(x')\cap
  E_\ell(x)\). Moreover, for each \(x''\in E_k(x')\cap D_i\), there is
  \(g''\in W'\) so that
  \(\theta(x'')=g''g_i\cdot\theta(x)\). Therefore, if
  \[
    g=g''{g'}^{-1}\in W'{W'}^{-1}\subseteq W,
  \]
  then
  \[
    g\cdot\theta(x')=gg'g_i\cdot\theta(x)=g''g_i\cdot\theta(x)=\theta(x''),
  \]
  which completes the proof of Claim~\ref{cl:theta}.
  \end{proof}

\begin{cor}\label{c:theta}
  Under the conditions of Lemma~\ref{l:theta}, for every neighborhood
  \(W\) of the identity \(1_G\in G\) and, \(\forall^*x\in X\),
  \(\exists k\in\Z\) such that, \(\forall^*x'\in E_k(x)\), \(\exists
  g\in W\) for which \(g\cdot\theta(x)=\theta(x')\).
\end{cor}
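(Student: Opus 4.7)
The plan is to derive Corollary~\ref{c:theta} from Lemma~\ref{l:theta} by specializing the Lemma to a single ``generic'' $x'\in S_\ell(x)$ (the residual subset of $E_\ell(x)$ supplied by the Lemma) chosen close enough to $x$ in the $d$-metric. For such an $x'$, a small enough $E_k(x)$ sits inside both the neighborhood $U$ and the fiber $E_\ell(x')$ produced by the Lemma, and the Lemma's conclusion directly becomes the Corollary's. Without loss of generality $W$ is symmetric. I would first apply Lemma~\ref{l:theta} with $W$ for every $\ell\in\Z$; intersecting the countably many residual sets yields a residual $R\subset X$ on which the Lemma's conclusion holds for every $\ell$ simultaneously, with an associated residual $S_\ell(x)\subset E_\ell(x)$ for each $x\in R$ and each $\ell$.

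Fix $x\in R$ and some $\ell\in\Z$. Let $r_\ell>0$ be such that $B_d(y,r_\ell)\subset E_\ell(y)$ for all $y\in X$, as provided by Definition~\ref{d:type I}(iii)(d). Using Definition~\ref{d:type I}(iii)(c), pick $m\in\Z$ with $E_m(y)\subset B_d(y,r_\ell/2)$ for all $y$, and select some $x'\in S_\ell(x)\cap E_m(x)$. Let $U$ be the $X$-open neighborhood of $x$ produced by Lemma~\ref{l:theta} for this specific $x'$. Because the $d$-topology refines the $X$-topology (Definition~\ref{d:type I}(ii)), $U$ is $d$-open and hence contains a ball $B_d(x,\eta)$ for some $\eta>0$. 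Applying Definition~\ref{d:type I}(iii)(c) once more, choose $k\in\Z$ with $E_k(y)\subset B_d(y,\delta)$ for all $y$, where $\delta:=\min(\eta,r_\ell/2)>0$.

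For any $x''\in E_k(x)$ the triangle inequality then gives $d(x,x'')<\delta\le\eta$, so $x''\in B_d(x,\eta)\subset U$, and $d(x',x'')\le d(x',x)+d(x,x'')<r_\ell/2+r_\ell/2=r_\ell$, so $x''\in B_d(x',r_\ell)\subset E_\ell(x')$. Consequently $x''\in U\cap E_k(x)\cap E_\ell(x')$, and Lemma~\ref{l:theta} delivers some $g\in W$ with $g\cdot\theta(x)=\theta(x'')$. This yields the conclusion for \emph{every} $x''\in E_k(x)$, which is a priori stronger than for residually many.

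The main obstacle is the selection of $x'\in S_\ell(x)\cap E_m(x)$: one needs this intersection to be nonempty, which in turn needs $E_m(x)$ to be non-meager in $E_\ell(x)$ (in the $X$-topology in which $S_\ell(x)$ is residual). This should follow from the openness and continuity package in Definition~\ref{d:type I}(iii)(e),(f) combined with Remark~\ref{r:type I}(iv); concretely, applying (iii)(f) to appropriate triples of $\EE$-relations sandwiching $B_d(x,r_\ell)$ promotes the inclusion $E_m(x)\subset E_\ell(x)$ to a bi-continuous local structure, from which the required non-meagerness should be extracted. This is the point at which the full force of the type~I hypothesis genuinely enters.
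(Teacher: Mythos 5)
Your reduction is correct up to the point you flag yourself, but that flagged point is a genuine and in fact fatal gap: you need $S_\ell(x)\cap E_m(x)\neq\emptyset$, which you propose to get from ``$E_m(x)$ is non-meager in $E_\ell(x)$,'' and that statement is \emph{false} in the very examples the type~I axioms are built for. In the supremum metric relation of Section~\ref{s:supremum}, for $0<s<r$ the set $E_s(f)$ is meager in $E_r(f)$: the sets $U_n$ of those $g\in E_r(f)$ with $\sup_{|x|>n}|f(x)-g(x)|>s$ are open and dense in $E_r(f)$ (take $h=(1-\lambda)g+\lambda(f+t)$ with $s<t<r$ and $\lambda$ supported far out), $E_r(f)$ is Polish hence Baire (Remark~\ref{r:type I}-(ii)), and $\bigcap_nU_n$ misses $E_s(f)$; compare Remark~\ref{r:B_infty(f,r) is not Polish}. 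The axioms of Definition~\ref{d:type I} only place a $d$-ball inside $E_m(x)$ (condition (iii)-(d)), and $d$-balls are precisely the sets that are meager in the coarse topology in the turbulent situations of interest, so no combination of (iii)-(e),(f) with Remark~\ref{r:type I}-(iv) can produce the non-meagerness you need. A related warning sign is that your argument would prove the conclusion for \emph{all} $x''\in E_k(x)$, strictly more than the corollary's $\forall^*x'\in E_k(x)$: forcing $E_k(x)\subset E_\ell(x')$ requires $x'$ to be $d$-close to $x$, and $d$-closeness is a meager condition on $x'\in E_\ell(x)$, hence incompatible with picking $x'$ in a prescribed residual set.

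The paper's proof follows your first two steps verbatim (generic $x$, generic $x'\in E_\ell(x)$, the neighborhood $U$ from Lemma~\ref{l:theta}, then $k\le\ell$ with $E_k(x)\subset U$ via Definition~\ref{d:type I}-(ii),(iii)-(c) and Remark~\ref{r:type I}-(i)), but then it does \emph{not} try to force $E_k(x)\subset E_\ell(x')$. It only needs the set of good points to be residual in $E_k(x)$, and for that it suffices that $E_k(x)\cap E_\ell(x')$ be residual in $E_k(x)$ for a suitably generic $x'$, since every point of that intersection is good. This is obtained by a Kuratowski--Ulam argument: by Definition~\ref{d:type I}-(iii)-(f) and Remark~\ref{r:type I}-(iv) the restricted relation $E_\ell\cap(E_\ell(x')\times E_k(x'))$ is bi-continuous with Baire ($G_\delta$) fibers, and Theorem~\ref{t:K-U} then yields the required residuality (the intersection is $G_\delta$, so dense implies residual). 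So the ``full force of the type~I hypothesis'' enters exactly where you guessed, but it is used to get a residual intersection via Theorem~\ref{t:K-U}, not to make $E_m(x)$ non-meager in $E_\ell(x)$; to repair your write-up, replace the search for $x'\in S_\ell(x)\cap E_m(x)$ by this Kuratowski--Ulam step and weaken your conclusion to residually many $x''\in E_k(x)$.
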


\begin{proof}
  Let \(\ell\in\Z\) and let \(W\) be an open neighborhood of \(1_G\)
  in \(G\). Then, \(\forall^*x\in X\) and \(\forall^*x'\in
  E_\ell(x)\), let \(U\) be an open neighborhood of \(x\) in \(X\)
  satisfying the statement of Lemma~\ref{l:theta}. By
  Definition~\ref{d:type I}-(ii),(iii)-(c), there is \(k\le\ell\) so
  that \(E_k(x)\subseteq U\), obtaining that, \(\forall^* x''\in
  E_k(x)\cap E_\ell(x')\), \(\exists g\in W\) so that
  \(g\cdot\theta(x)=\theta(x'')\). Then the result follows from
  Theorem~\ref{t:K-U}, Definition~\ref{d:type I}-(iii)-(f) and
  Remark~\ref{r:type I}-(iii) with the relation
  \(E_\ell\cap(E_\ell(x')\times E_k(x'))\) over \(E_\ell(x')\) and
  \(E_k(x')\).
\end{proof}

\begin{theorem}[{Cf.\ {\cite[Theorem~3.18]{Hjorth2000}}}]\label{t:E_d^X is generically E_S infty Y-ergodic}
  Let \(d\) be a metric relation of type~I on a space \(X\) and let \(Y\)
  be a Polish \(S_\infty\)-space. If there are residually many \( x\in
  X\) for which every local equivalence class of \(x\) is somewhere dense,
  then \(E_d^X\) is generically \(E_{S_\infty}^Y\)-ergodic.
\end{theorem}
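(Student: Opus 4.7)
The plan is to adapt Hjorth's proof that turbulent Polish actions are generically $E_{S_\infty}^{\infty}$-ergodic \cite[Theorem~3.18]{MR1725642} to the setting of type~I metric equivalence relations, using Corollary~\ref{c:theta} as the substitute for the continuity of a Polish group action. Given an $(E_d^X,E_{S_\infty}^Y)$-invariant Baire measurable map $\theta:X\to Y$, the goal is to produce a residual saturated $C\subset X$ on which $\theta(C)$ is contained in a single $S_\infty$-orbit.

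First I would set up the ambient residual set. By \cite[Theorem~8.38]{MR1321597}, $\theta$ is continuous on a dense $G_\delta$ subset of $X$. Fix a countable decreasing base $\{W_n\}_{n\in\N}$ of clopen identity neighborhoods of $S_\infty$ (say $W_n=\{g\in S_\infty\mid g|_{\{0,\dots,n\}}=\mathrm{id}\}$), and fix a family $\EE=\{E_\ell\mid\ell\in\Z\}$ as in Definition~\ref{d:type I} and Remark~\ref{r:type I}-(i). Apply Corollary~\ref{c:theta} to each $W_n$, and absorb the resulting countably many residual sets together with the hypothesized residual set (on which local equivalence classes are somewhere dense) and the continuity set of $\theta$ into a single saturated residual $X_1\subset X$ via Corollary~\ref{c:C}-(ii). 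On $X_1$, for every $x$ and every $n$, there is an integer $k_n(x)$ such that $\forall^*x'\in E_{k_n(x)}(x)$ there is some $g\in W_n$ with $g\cdot\theta(x)=\theta(x')$, and every local equivalence class $E_d^X(x,U,V)$ is somewhere dense.

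Next I would build chains. Fix $x_0\in X_1$ and consider finite sequences $x_0=y_0,y_1,\dots,y_N=x$ with each $y_i\in X_1$ and $y_i\in E_{k_{n_i}(y_{i-1})}(y_{i-1})$ for some $n_i$. At each link Phase~I produces $g_i\in W_{n_i}$ with $\theta(y_i)=g_i\cdot\theta(y_{i-1})$, so by composition $\theta(x)=g_N\cdots g_1\cdot\theta(x_0)\in S_\infty\cdot\theta(x_0)$. Thus every chain-reachable point $x$ satisfies $\theta(x)\,E_{S_\infty}^Y\,\theta(x_0)$. Somewhere-density of local equivalence classes (Definition~\ref{d:turbulent}-(iii)), together with the metric estimates of Definition~\ref{d:type I}-(iii)-(c),(d), the openness property (iii)-(f) and Lemma~\ref{l:open and dense} applied to the fibers of the bi-continuous relation $E_d^X$ (continuity via Lemma~\ref{l:union of cont and bi-cont rels}, openness from $E_d^X=\bigcup_\ell E_\ell$), allow one to extend any partial chain generically: the set of $y_{i+1}\in E_{k_{n_{i+1}}(y_i)}(y_i)\cap X_1$ is residual in $E_{k_{n_{i+1}}(y_i)}(y_i)$ and somewhere dense in $E_d^X(y_i)$. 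Iterating this through countably many refinements produces, for $\forall^*x_0\in X_1$, a residual set of endpoints in $E_d^X(x_0)\cap X_1$. Theorem~\ref{t:K-U} applied to $E_d^X$ then promotes this to a residual saturated $C\subset X_1$ on which all values of $\theta$ lie in the single orbit $S_\infty\cdot\theta(x_0)$, yielding the desired generic $E_{S_\infty}^Y$-ergodicity.

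The main obstacle is Phase~II: the admissible local range $E_{k_n(y)}(y)$ depends on both the current chain vertex $y$ and the target precision $n$, so controlling long chains that reach distant equivalent points requires nested applications of Theorem~\ref{t:K-U} inside a countable intersection of residual sets, so that every intermediate $y_i$ stays in $X_1$ and so that the chain-endpoint set remains residual after quantifying over $x_0$. This bookkeeping---a substitute in the uniform-relation setting for the local-orbit chaining of \cite[Lemmas~3.17 and~3.18]{MR1725642}---is the technical core of the argument, and the reason the paper isolates Definition~\ref{d:type I}-(iii)-(f) and Corollary~\ref{c:C}-(ii) as tools.
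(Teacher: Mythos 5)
There is a genuine gap at the heart of your argument. The chains you build connect $x_0$ only to points $x$ lying in the same $E_d^X$-class as $x_0$: each link $y_i\in E_{k_{n_i}(y_{i-1})}(y_{i-1})$ keeps you inside $E_d^X(x_0)$, and for such points the conclusion $\theta(x)\,E_{S_\infty}^Y\,\theta(x_0)$ is already immediate from the $(E_d^X,E_{S_\infty}^Y)$-invariance of $\theta$ --- no group elements or chaining are needed. Generic ergodicity, however, demands that residually many points in \emph{different} $E_d^X$-classes map into one $S_\infty$-orbit, and your final step (``Theorem~\ref{t:K-U} applied to $E_d^X$ then promotes this to a residual saturated $C$ on which all values of $\theta$ lie in the single orbit'') is exactly the unproved content: having a residual subset of the single class $E_d^X(x_0)$ land in one orbit gives no control over other classes, and the Kuratowski--Ulam theorem cannot supply it (its converse direction would moreover require the fibers of $E_d^X$ to be Baire, which Remark~\ref{r:type I}-(iii) warns need not hold; the paper only has Baireness for the fibers of the $E_\ell$). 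The paper's proof closes precisely this gap by a two-sided interleaved construction (its Claim on the sequences $(x_i),(y_i),(g_i),(h_i),(U_i),(V_i)$): starting from two arbitrary points $x,y$ of the residual set $C$, possibly $E_d^X$-inequivalent, it builds local chains from each of them that keep re-entering a common nested sequence of open sets $U_i\supset V_i\supset U_{i+1}$ with $\diam(\theta(U_i\cap C))<2^{-i}$, so that $\theta(x_i)$ and $\theta(y_i)$ converge to the same point of $Y$, while the correcting permutations $g_i,h_i$ are forced into the clopen subgroups $W_{N_i}$ and hence converge in $S_\infty$; this yields $g\cdot\theta(x)=h\cdot\theta(y)$ across classes. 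The somewhere-density of local classes is used there to make the chain from $x$'s side dense in $V_i$ and the chain from $y$'s side dense in $U_{i+1}$, not merely to extend a single chain generically.

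A second, related omission: you invoke Corollary~\ref{c:theta} pointwise to get integers $k_n(x)$, but the construction requires these indices to be \emph{locally constant on the residual set} (so that all the intermediate chain points $z_a$ share the same index $n_i$ on $U_i\cap C$ and the corrections $f_a$ all lie in the fixed subgroup $W_{N_i}$, making the product $f^*_i$ stay in $W_{N_i}$ and the limit permutation exist). This is why the paper introduces the index function $I(x,N)$ and proves its Baire measurability (via analyticity of the sets $S_N$, the Baire property, and Theorem~\ref{t:K-U}), then restricts to a dense $G_\delta$ where $\theta$ and $I$ are continuous. Without that uniformity, and without the cross-class interleaving, the proposal does not reach the conclusion.
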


\begin{proof}
  Let \(\theta:X\to Y\) be an \((E_d^X,E_{S_\infty}^Y)\)-invariant
  Borel map. Let   \(\EE=\{\,E_n\mid n\in\Z\,\}\) be a sequence of
  subsets of \(E_x\subseteq X\times X\) satisfying the conditions of
  Definition~\ref{d:type I}. The sets
  \[
  W_N=\{\,h\in S_\infty\mid \forall \ell\le N,\ h(\ell)=\ell\,\},
  \]
  with \(N\in\N\), which are open and closed subgroups, form a base of neighborhoods of the identity
  \(1_{S_\infty}\in S_\infty\). Define
  \(I:X\times\N\to\N\cup\{\infty\}\) by setting \(I(x,N)\) equal to the
  least \(\ell\in\N\) such that, \(\forall^*x'\in E_{-\ell}(x)\), \(\exists
  h\in W_N\) so that \(h\cdot\theta(x)=\theta(x')\) if there is such
  \(\ell\), and setting \(I(x,N)=\infty\) if there is not such
  \(\ell\). Let \(\N\) and \(\N\cup\{\infty\}\) be endowed with the discrete
  topologies.
  
  \begin{claim}\label{cl:I is Baire measurable}
   The map \(I\) is Baire measurable.
  \end{claim}
  
  Since \( S_N=\{\,(y,h\cdot y)\mid y\in Y,\ h\in W_N\,\} \), \(N\in
  \N\), is analytic in \(Y\times Y\), and \(E_{-\ell}\), \(\ell\in
  \N\), is a Polish space by Remark~\ref{r:type I}-(i), the set
  \(R_{\ell,N}=E_{-\ell}\cap(\theta\times\theta)^{-1}(S_N)\) is
  analytic in \(E_{-\ell}\) \cite[Proposition~14.4-(ii)]{Kechris1995},
  and therefore it has the Baire property
  \cite[Theorem~21.6]{Kechris1995}. Hence there is an open set
  \(U_{\ell,N}\subseteq E_{-\ell}\) such that \(R_{\ell,N}\bigtriangleup
  U_{\ell,N}\) is meager in \(E_{-\ell}\). The restriction
  \(E_{-\ell}\to X\) of the first factor projection \(X\times X\to X\)
  is continuous and open by Lemma~\ref{l:E cont. iff pi X:E to X
    open}, so its graph \(\Pi_\ell\subseteq E_{-\ell}\times X\) is a
  bi-continuous relation (Example~\ref{ex:cont. relation}-(i)). By
  Theorem~\ref{t:K-U}-(ii), there is a residual set \(D_{\ell,N}\subseteq
  X\) such that, \(\forall x\in D_{\ell,N}\),
  \((R_{\ell,N}\bigtriangleup U_{\ell,N})\cap\Pi_\ell^{\op}(x)\) is
  meager in \(\Pi_\ell^{\op}(x)\). Note that
  \(\Pi_\ell^{\op}(x)=\{x\}\times E_{-\ell}(x)\equiv E_{-\ell}(x)\)
  and
  \begin{align*}
    (R_{\ell,N}\bigtriangleup U_{\ell,N})\cap\Pi_\ell^{\op}(x)
    &=\{x\}\times(R_{\ell,N}(x)\bigtriangleup U_{\ell,N}(x))\\
    &\equiv R_{\ell,N}(x)\bigtriangleup U_{\ell,N}(x).
  \end{align*}
  Hence, \(\forall x\in D_{\ell,N}\), \(R_{\ell,N}(x)\bigtriangleup U_{\ell,N}(x)\) is meager in
  \(E_{-\ell}(x)\). On the other hand,
  \[
    I^{-1}(\{0,\dots,\ell\})=\bigcup_{N=0}^\infty(Q_{\ell,N}\times\{N\}),
  \]
  where
  \[
    Q_{\ell,N}=\{\,x\in X\mid(E_{-\ell}\cap R_N)(x)\ \text{is residual in}\ E_{-\ell}(x)\,\}.
  \]
  Since
  \[
  Q_{\ell,N}\cap D_{\ell,N}=\{\,x\in D_{\ell,N}\mid(E_{-\ell}\cap
  U_N)(x)\ \text{is dense in}\ E_{-\ell}(x)\,\},
  \]
  it follows that \(Q_{\ell,N}\) has the Baire property in \(X\) by
  Lemmas~\ref{l:intersection of a cont rel and an open set}
  and~\ref{l:E(x) is dense in Y}-(ii), and the proof of
  Claim~\ref{cl:I is Baire measurable} is finished.
  
  By \cite[Theorem~8.38]{Kechris1995}, Claim~\ref{cl:I is Baire
    measurable}, and Corollary~\ref{c:theta}, there is a dense
  \(G_\delta\) subset \(C_0\subseteq X\) such that \(\theta\) is continuous on
  \(C_0\), \(I\) is continuous on \(C_0\times\N\), and
  \(I(C_0\times\N)\subseteq\N\).
  
  For \(k\in\Z\), a non-empty open set \(U\subseteq X\) and \(x\in U\),
  define
  \[
  \QQ(x,U,k)=\bigcup_{i=0}^\infty(E_k\cap(U\times U))^i(x).
  \]
  The following properties are consequences of Definition~\ref{d:type
    I}-(iii)-(c),(d):
  \begin{itemize}
      
  \item for each \(\epsilon>0\), there is \(k\in\Z\) such that
    \(\QQ(x,U,k)\subseteq E_d^X(x,U,\epsilon)\) for all \(x\in U\), and
      
  \item for each \(k\in\Z\), there is \(\epsilon>0\) such that
    \(E_d^X(x,U,\epsilon)\subseteq\QQ(x,U,k)\) for all \(x\in U\).
      
  \end{itemize}
  Hence, by hypothesis, there is a residual set \(C_1\subseteq X\) such
  that, for any \(U\), \(x\) and \(k\) as above, if \(x\in C_1\), then
  \(\QQ(x,U,k)\) is somewhere dense. By Corollary~\ref{c:C}-(ii), there
  is a residual set \(C\subseteq C_0\cap C_1\) such that, for all \(x\in C\) and all \(k\in\Z\), \(E_k(x)\cap C\) is
  residual in \(E_k(x)\).
  
  Fix \(x\), \(y\) in \(C\) and a complete metric inducing the
  topology of \(Y\).
  
  \begin{claim}\label{cl:E d X is generically E S infty Y-ergodic}
    There exist sequences, \((x_i)\) and \((y_i)\) in \(C\) with \(x_1=x\) and
    \(y_1=y\), \((g_i)\) and \((h_i)\) in \(S_\infty\), \((U_i)\) and \((V_i)\)
    consisting of open subsets of \(X\), and \((n_i)\) and \((k_i)\) in
    \(\N\), such that:
    \begin{enumerate}[(i)]
      
    \item \(g_i\cdot\theta(x)=\theta(x_i)\);
        
    \item \(h_i\cdot\theta(y)=\theta(y_i)\);
        
    \item \(x_{i+1}\in U_{i+1}\cap C\cap\QQ(x_i,U_i,-n_i)\);
        
    \item \(y_{i+1}\in V_{i+1}\cap C\cap\QQ(y_i,V_i,-k_i)\);
        
    \item \(U_i\supseteq V_i\supseteq U_{i+1}\);
        
    \item \(\diam(\theta(U_i\cap C))<2^{-i}\);
        
    \item \((U_{i+1}\cap C)\times\{N_{i+1}\}\subseteq I^{-1}(n_{i+1})\)
      for
      \[
        N_{i+1}=\max\{\,g_{i+1}(\ell),g_{i+1}^{-1}(\ell)\mid\ell\le i+1\,\};
      \]

    \item \((V_{i+1}\cap C)\times\{K_i\}\subseteq I^{-1}(k_i)\) for
      \[
      K_i=\max\{\,h_i(\ell),h_i^{-1}(\ell)\mid \ell\le i\,\};
      \]
          
    \item \(g_{j+1}(\ell)=g_{i+1}(\ell)\) and
      \(g^{-1}_{j+1}(\ell)=g^{-1}_{i+1}(\ell)\) for \(\ell\le i+1\le
      j+1\);
        
    \item \(h_j(\ell)=h_i(\ell)\) and \(h^{-1}_j(\ell)=h^{-1}_i(\ell)\)
      for \(\ell\le i\le j\);
        
    \item \(\QQ(x_i,U_i,-n_i)\cap V_i\) is dense in \(V_i\); and
        
    \item \(\QQ(y_i,V_i,-k_i)\cap U_{i+1}\) is dense in \(U_{i+1}\).
        
    \end{enumerate}
  \end{claim}
  
  If this assertion is true, then there exist \(g=\lim_i g_i\) and
  \(h=\lim_ih_i\) in \(S_\infty\) by Claim~\ref{cl:E d X is generically E
    S infty Y-ergodic}-(ix),(x), and so
  \(g\cdot\theta(x)=h\cdot\theta(y)\) by Claim~\ref{cl:E d X is
    generically E S infty Y-ergodic}-(i)--(vi), proving Theorem~\ref{t:E_d^X is generically E_S infty Y-ergodic}.
  
  The construction of the sequences of Claim~\ref{cl:E d X is
    generically E S infty Y-ergodic} is made by induction on
  \(i\in\N\). Let \(x_0=x\), \(U_0=X\), \(n_0=0\) and \(g_0=h_0=1_{S_\infty}\),
  and choose \(V_0\) and \(k_0\) so that \(y\in V_0\) and
  \[
  (V_0\cap C)\times\{0\}\subseteq I^{-1}(k_0).
  \]
  Suppose that all the members with
  indices \(\le i\in \N\) of these sequences have been constructed. Then
   \(x_{i+1}\), \(g_{i+1}\) and \(U_{i+1}\) are constructed in the following
  manner. (The constructions of \(y_{i+1}\), \(h_{i+1}\) and
  \(V_{i+1}\) are analogous.)
  
  Let \(U\subseteq V_i\) be a non-empty open set such that
  \(\QQ(y_i,V_i,-k_i)\cap U\) is dense in \(U\), and such that
  \(\diam(\theta(U\cap C))<2^{-i-1}\) (which is possible because
  \(\theta\) is continuous on \(C_0\)). Choose
  \(x_{i+1}\in\QQ(x_i,U_i,-n_i)\cap U\), and take \(z_0,\dots,z_k\in
  U_i\) so that \(z_0=x_i\), \(z_k=x_{i+1}\) and \(z_a\in
  E_{-n_i}(z_{a-1})\) for \(a\in\{1,\dots,k\}\). You may assume that
  \(i>0\) because~(ix) does not restrict the choice of \(g_1\).
  
  \begin{claim}\label{cl:z a in C}
     We can assume that \(z_a\in C\) for all \(a\in\{0,\dots,k\}\).
  \end{claim}
  
  Claim~\ref{cl:z a in C} is proved by showing that for each
  \(a\in\{0,\dots,k\}\) there exists
  \[
  z'_a\in U_i\cap(E_{-n_i}^{k-a})^{-1}(P_U)\cap C
  \]
  so that \(z'_0=x_i\) and, for \(a\in\{1,\dots,k\}\), \(z'_a\in
  E_{-n_i}(z'_{a-1})\); then we can choose
  \(x'_{i+1}=z'_k\) instead of \(x_{i+1}\), and \(z'_a\) instead of
  \(z_a\). We have
  \[
  z'_0=x_i\in U_i\cap (E_{-n_i}^k)^{-1}(P_U)\cap C.
  \]
  Suppose that \(z'_a\) has been constructed for \(a<k\). Since
  \(z'_a\in C\) and \(E_{-n_i}^{k-a-1}\) is continuous by Lemma~\ref{l:F
    circ E is continuous}-(i), the set
  \[
  E_{n_i}(z'_a)\cap U_i\cap(E_{-n_i}^{k-a-1})^{-1}(P_U)\cap C
  \]
  is residual in  \(
  E_{n_i}(z'_a)\cap U_i\cap(E_{-n_i}^{k-a-1})^{-1}(P_U)\). 
  So, by Remark~\ref{r:type I}-(i), there is
  \[
  z'_{a+1}\in E_{n_i}(z'_a)\cap
  U_i\cap(E_{-n_i}^{k-a-1})^{-1}(P_U)\cap C,
  \]
  as desired for the proof of Claim~\ref{cl:z a in C}.
  
  Continuing with the proof of Claim~\ref{cl:E d X is generically E S
    infty Y-ergodic}, Claim~\ref{cl:z a in C} gives \(I(z_a,N_i)=n_i\)
  for all \(a\in\{0,\dots,k\}\) by the induction hypothesis with
  Claim~\ref{cl:E d X is generically E S infty Y-ergodic}-(vii).
  
  \begin{claim}\label{cl:f a}
    We can assume that, for each \(a<k\), there exists some \(f_a\in
    W_{N_i}\) such that \(f_a\cdot\theta(z_a)=\theta(z_{a+1})\).
  \end{claim}
  
  As  in Claim~\ref{cl:z a in C}, we show that the condition of this
  claim is satisfied by a new finite sequence of points
  \[
  z'_a\in U_i\cap(E_{-n_i}^{k-a})^{-1}(P_U)\cap C
  \]
  so that \(z'_0=x_i\) and, for \(a\in\{1,\dots,k\}\), 
  \(z'_a\in E_{-n_i}(z'_{a-1})\); in particular, \(I(z'_a,N_i)=n_i\) as above. This
  new sequence is constructed by induction on \(a\). First, let
  \(z'_0=x_i\), and suppose that \(z'_a\) was constructed for all 
  \(a<k\). Since \(I(z'_a,N_i)=n_i\), \(\forall^*z\in E_{-n_i}(z'_a)\),
  \(\exists f\in W_{N_i}\) so that \(f\cdot\theta(z_a)=\theta(z)\). So the
  set of points
  \[
  z\in E_{-n_i}(z'_a)\cap U_i\cap(E_{-n_i}^{k-a-1})^{-1}(P_U)\cap C
  \]
  such that \(\exists f\in W_{N_i}\) so that
  \(f\cdot\theta(z_a)=\theta(z)\) is residual in
  \[ E_{-n_i}(z'_a)\cap U_i\cap(E_{-n_i}^{k-a-1})^{-1}(P_U)\cap C.\]
  Hence \(f_a\cdot\theta(z'_a)=\theta(z'_{a+1})\) for some \(f_a\in
  W_{N_i}\) and some
  \[ z'_{a+1}\in E_{-n_i}(z'_a)\cap
  U_i\cap(E_{-n_i}^{k-a-1})^{-1}(P_U)\cap C
  \] by Remark~\ref{r:type I}-(i), completing the proof of
  Claim~\ref{cl:f a}.
  
  According to Claim~\ref{cl:f a},
  \(f^*_i\cdot\theta(x_i)=\theta(x_{i+1})\) for \(f^*_i=f_{k-1}\cdots
  f_0\in W_{N_i}\). Then let \(g_{i+1}=f^*_ig_i\). Moreover we can
  take some open neighborhood \(U_{i+1}\) of \(x_{i+1}\) in \(U\) and
  some \(n_{i+1}\in\N\) such that \(\diam(\theta(U_{i+1}\cap
  C))<2^{-i-1}\) and
    \[ (U_{i+1}\cap C)\times\{N_{i+1}\}\subseteq I^{-1}(n_{i+1}),
    \] where \(N_{i+1}\) is defined according Claim~\ref{cl:E d X is
      generically E S infty Y-ergodic}-(vii). These choices of
    \(x_{i+1}\), \(g_{i+1}\), \(U_{i+1}\) and \(n_{i+1}\) satisfy the
    conditions of Claim~\ref{cl:E d X is generically E S infty
      Y-ergodic}.
\end{proof}

\begin{rem} The proofs of Lemma~\ref{l:theta} and
    Theorem~\ref{t:E_d^X is generically E_S infty Y-ergodic} are
    directly inspired by those of~\cite[Lemma~3.17 and
    Theorem~3.18]{Hjorth2000}.
\end{rem}

\section{A class of turbulent metric relations}\label{s:U_R,r}

Let \(X\) be a set, and let \(\, \mathcal{U}=\{U_{R,r}\subseteq
X\times X\mid R,r>0\,\}\) be a set of relations over \(X\) that
satisfy the following hypothesis. 

\begin{hyp}\label{h:U_R,r}
    \begin{enumerate}[(i)]
  
      \item \(\bigcap_{R,r>0}U_{R,r}=\Delta_X\);
      
      \item each \(U_{R,r}\) is symmetric;
      
      \item if \(R\le S\), then \(U_{R,r}\supseteq U_{S,r}\) for all \(r>0\);
    
      \item \(U_{R,r}=\bigcup_{s<r}U_{R,s}\) for all \(R,r>0\); and
    
      \item there is some function \(\phi:(\R_+)^2\to\R_+\) such that,
for all \(R,S,r,s>0\),
        \begin{gather*} R\le\phi(R,r),\\ 
        (R\le S,\ r\le s)\Longrightarrow\phi(R,r)\le\phi(S,s),\\ 
U_{\phi(R,r+s),r}\circ
U_{\phi(R,r+s),s}\subseteq U_{R,r+s}.
        \end{gather*}
  
    \end{enumerate}
\end{hyp}

By Hypothesis~\ref{h:U_R,r}, the sets \(U_{R,r}\) form a base of
entourages of a Hausdorff uniformity, also denoted by \(\mathcal{U}\),
on \(X\). This uniformity is metrizable because the entourages
\(U_{n,1/n}\), \(n\in\Z_+\), form a countable base for it.

For each \(r>0\), let \(E_r=\bigcap_{R>0}U_{R,r}\). This set is symmetric by
Hypothesis~\ref{h:U_R,r}-(ii); moreover
  \begin{equation}\label{E_s circ E_r subset E_r+s} 
    E_s\circ E_r\subseteq E_{r+s},
  \end{equation} for \(r,s>0\), by Hypothesis~\ref{h:U_R,r}-(v).

\begin{lemma}\label{l:U_S,rsubset Int(U_R,r)} 
  For \(R,r>0\) and \(S=\phi(\phi(R,r),r)\) \textup{(}where \(\phi\)
  is the function given in
  Hypothesis~\ref{h:U_R,r}-\textup{(}v\textup{))}, the set
  \(U_{S,r}\subseteq\Int(U_{R,r})\).
\end{lemma}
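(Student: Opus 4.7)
The plan is to find, for any given $(x,y)\in U_{S,r}$, an entourage neighborhood $U_{S,r_0}(x)\times U_{S,r_0}(y)$ of $(x,y)$ in $X\times X$ that sits inside $U_{R,r}$. Since uniform entourages produce neighborhoods in the induced topology, this will put $(x,y)$ in the interior of $U_{R,r}$.

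By Hypothesis~\ref{h:U_R,r}-(iv), pick $s<r$ with $(x,y)\in U_{S,s}$, and set $r_0=(r-s)/2>0$. This choice gives the arithmetic identities $s+r_0=r-r_0$ and $r_0+(r-r_0)=r$, which are what will let us apply Hypothesis~\ref{h:U_R,r}-(v) twice in a compatible way. If $(x',x'')\in U_{S,r_0}(x)\times U_{S,r_0}(y)$, then symmetry (Hypothesis~\ref{h:U_R,r}-(ii)) produces a chain $(x',x)\in U_{S,r_0}$, $(x,y)\in U_{S,s}$, $(y,x'')\in U_{S,r_0}$, so $(x',x'')\in U_{S,r_0}\circ U_{S,s}\circ U_{S,r_0}$. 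It therefore suffices to prove
\[
U_{S,r_0}\circ U_{S,s}\circ U_{S,r_0}\subset U_{R,r}.
\]

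Apply Hypothesis~\ref{h:U_R,r}-(v) with $R$ replaced by $\phi(R,r)$ and with the splitting $r-r_0=s+r_0$:
\[
U_{\phi(\phi(R,r),r-r_0),s}\circ U_{\phi(\phi(R,r),r-r_0),r_0}\subset U_{\phi(R,r),r-r_0}.
\]
Then apply Hypothesis~\ref{h:U_R,r}-(v) again, with the splitting $r=r_0+(r-r_0)$:
\[
U_{\phi(R,r),r_0}\circ U_{\phi(R,r),r-r_0}\subset U_{R,r}.
\]
Now, by the first line of~(v), $S=\phi(\phi(R,r),r)\geq \phi(R,r)$, and by monotonicity of $\phi$ in its second argument, $S\geq\phi(\phi(R,r),r-r_0)$. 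Using Hypothesis~\ref{h:U_R,r}-(iii), this means $U_{S,\cdot}\subset U_{\phi(R,r),\cdot}$ and $U_{S,\cdot}\subset U_{\phi(\phi(R,r),r-r_0),\cdot}$ in the relevant slots. Substituting and chaining the two displays yields the required composition inclusion.

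The only real work is bookkeeping: getting the right ordering of compositions (the paper's convention is that $F\circ E$ means apply $E$ first), and choosing $r_0$ so that both applications of~(v) line up with compatible splittings of $r$ and $r-r_0$. The monotonicity properties of $\phi$, together with the fact that $S$ is $\phi(\phi(R,r),r)$ rather than just $\phi(R,r)$, are exactly what allow the single constant $S$ to dominate both auxiliary constants $\phi(R,r)$ and $\phi(\phi(R,r),r-r_0)$ that arise in the double application of~(v).
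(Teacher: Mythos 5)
Your proof is correct and follows essentially the same route as the paper's: choose $s<r$ with $(x,y)\in U_{S,s}$, take the half-gap radius, use symmetry to reduce to the triple composition $U_{S,r_0}\circ U_{S,s}\circ U_{S,r_0}\subset U_{R,r}$, and establish that inclusion by two applications of Hypothesis~\ref{h:U_R,r}-(v) together with the monotonicity of $\phi$ and Hypothesis~\ref{h:U_R,r}-(iii). The only difference is bookkeeping (you combine the right-hand pair of the triple composition first, the paper the left-hand pair), so the arguments are interchangeable.
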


\begin{proof} 
  Let \((x,y)\in U_{S,r}\). By Hypothesis~\ref{h:U_R,r}-(iv), there is
  some \(r_0 <r\) such that \((x,y)\in U_{S,r_0}\). Let
  \(r_1=\dfrac{r-r_0}{2}\). By Hypothesis~\ref{h:U_R,r}-(v),
  \begin{align*} 
    U_{S,r_1}\circ U_{S,r_0}\circ U_{S,r_1}
    &\subseteq U_{\phi(\phi(R,r),\frac{r+r_0}{2}),r_1}\circ
    U_{\phi(\phi(R,r),\frac{r+r_0}{2}),r_0}\circ U_{\phi(R,r),r_1}\\
    &\subseteq U_{\phi(R,r),\frac{r+r_0}{2}}\circ
    U_{\phi(R,r),r_1}\subseteq U_{R,r}.
  \end{align*} 
  So, by Hypothesis~\ref{h:U_R,r}-(ii),
  \(U_{S,r_1}(x)\times U_{S,r_1}(y)\subseteq U_{R,r}\), which implies that
  \((x,y)\in\Int(U_{R,r})\).
\end{proof}

\begin{cor}\label{c:E_r=bigcap_R>0 Int(U_R,r)}
  For each \( r>0\), the set \(E_r=\bigcap_{R>0}\Int(U_{R,r})\).
\end{cor}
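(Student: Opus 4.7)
The plan is to prove the two inclusions separately, and both follow almost immediately from what has already been established. The statement $E_r = \bigcap_{R>0} \operatorname{Int}(U_{R,r})$ unpacks the definition $E_r = \bigcap_{R>0} U_{R,r}$ by refining each $U_{R,r}$ to its interior, so the content is entirely supplied by the preceding lemma.

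For the inclusion $\bigcap_{R>0} \operatorname{Int}(U_{R,r}) \subset E_r$: this is automatic since $\operatorname{Int}(U_{R,r}) \subset U_{R,r}$ for every $R > 0$, and taking the intersection preserves this containment.

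For the reverse inclusion $E_r \subset \bigcap_{R>0} \operatorname{Int}(U_{R,r})$: fix any $R > 0$ and set $S = \phi(\phi(R,r), r)$, where $\phi$ is the function provided by Hypothesis~\ref{h:U_R,r}-(v). By Lemma~\ref{l:U_S,rsubset Int(U_R,r)}, $U_{S,r} \subset \operatorname{Int}(U_{R,r})$. But $E_r = \bigcap_{R'>0} U_{R',r} \subset U_{S,r}$, so $E_r \subset \operatorname{Int}(U_{R,r})$. Since $R > 0$ was arbitrary, $E_r \subset \bigcap_{R>0} \operatorname{Int}(U_{R,r})$.

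There is no real obstacle here; the whole corollary is a one-line consequence of the lemma. The only subtle point — which is already hidden in the lemma — is the use of $\phi$ to "enlarge $R$ twice" so as to pick up room for the interior, and the monotonicity clause of Hypothesis~\ref{h:U_R,r}-(v) ensuring $\phi(R,r) \geq R$ so that $E_r$ sits inside $U_{S,r}$.
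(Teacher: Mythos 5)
Your proof is correct and is exactly the argument the paper intends, since the corollary is stated without proof as an immediate consequence of Lemma~\ref{l:U_S,rsubset Int(U_R,r)}: one inclusion is trivial from $\Int(U_{R,r})\subset U_{R,r}$, and the other follows by applying the lemma with $S=\phi(\phi(R,r),r)$. (Your closing remark about needing $\phi(R,r)\ge R$ is superfluous: $E_r\subset U_{S,r}$ holds simply because $S>0$ is one of the parameters in the defining intersection $E_r=\bigcap_{R'>0}U_{R',r}$.)
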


Hypothesis~\ref{h:U_R,r}-(iii) and Corollary~\ref{c:E_r=bigcap_R>0
  Int(U_R,r)} imply that \(E_r=\bigcap_{n=1}^\infty\Int(U_{n,r})\) for
all \(r>0\) and so \(E_r\) is a \(G_\delta\) subset of \(X\times X\). Hence
the relations \(E_r\) satisfy Proposition~\ref{l: type I, with EE'}-(a'),(b').

Let \(d:X\times X\to[0,\infty]\) be defined by
\begin{equation}\label{d} 
  d(x,y)=\inf\{\,r>0\mid (x,y)\in E_r\,\},
\end{equation} 
where \(\inf \emptyset=\infty\), so \(d(x,y)=\infty\) if
\(x\notin \bigcup_{r>0} E_r(y)\). It easily follows from Hypothesis~\ref{h:U_R,r}
that \(d\) is a metric relation over \(X\). Note also
that, for \(0<r<s\),
  \[ 
    B_d(x,r)\subseteq E_r(x)\subseteq B_d(x,s),
  \] 
 and therefore
  \begin{equation}\label{E_d^X=bigcup_r>0 E_r}
    E_d^X=\bigcup_{r>0}E_r,
  \end{equation} 
and \(B_d(x,r)\subseteq U_{R,r}(x)\) for all \(R,r>0\) and all \(x\in X\), which
implies that the topology induced by \(d\) on \(X\) is finer than the
topology induced by the uniformity \(\mathcal{U}\) on
\(X\). Consequently, \(d\) satisfies conditions~(ii) of Definition~\ref{d:type I} and Proposition~\ref{l: type I, with EE'}-(c'),(d')
 with the relations \(E_r\).

\begin{example}\label{ex:d_R} 
  Let \(d_R\),  \(R>0\), be pseudo-metrics on a set, \(X\),
  such that
  \begin{gather} 
  R\le S\Longrightarrow d_R\le d_S,\label{d_R le d_S}\\ 
      (\,\forall R>0,\ d_R(x,y)=0\,)\Longrightarrow
    x=y.\label{x=y}
  \end{gather} Then the sets
  \[ U_{R,r}=\{\,(x,y)\in X\times X\mid d_R(x,y)<r\,\}
  \] 
  satisfy Hypothesis~\ref{h:U_R,r}; in particular,
  Hypothesis~\ref{h:U_R,r}-(v) holds with \(\phi(R,r)=R\) since
  the triangle inequality of each \(d_R\) and~\eqref{d_R le d_S} give
  \begin{equation}\label{U_R,r circ U_S,s subset U_min R,S,r+s}
    U_{R,r}\circ U_{S,s}\subseteq U_{\min\{R,S\},r+s},
  \end{equation} 
  for all \(R,S,r,s>0\). It follows that \(U_{R,r}(x)\) is open for all
  \(x\in X\) and all \(R,r>0\). In this case, the relations \(U_{R,r}\) induce
  the topology defined by the set of pseudo-metrics \(\{d_R\}\), and the
  corresponding sets \(E_r\) define the metric relation
  \(d=\sup_{R>0}d_R\).
\end{example}

For \(d\) (the metric equivalence relation given by \eqref{d})
satisfies the remaining conditions of Definition~\ref{d:type I},
further hypothesis are required.

\begin{hyp}\label{h:type I}
  \begin{enumerate}[(i)]
      
  \item \(X\) is a Polish space (with the topology induced by the uniformity
    \(\mathcal{U}\));
      
    \item for all \(R,r,s>0\) and \(x\in X\), if \(y\in E_s(x)\), then there are some
      \(T,t>0\) such that \(U_{T,t}(y)\subseteq E_s\circ U_{R,r}(x)\); and,
      
    \item for all \(r,s>0\) and \(x\in X\), if  \(y\in E_s(x)\)
      and \(V\) is a neighborhood of \(y\) in \(X\), then there is a
      neighborhood \(W\) of \(y\) in \(X\) such that
      \[
      E_r(W)\cap E_r(E_s(x))\subseteq E_r(V\cap E_s(x)).
      \]
      
    \end{enumerate}
 \end{hyp}

\begin{prop}\label{p:d is of type I} 
  If \(\mathcal{U}\) satisfies Hypothesis~2, then \(d\) is of type~I.
\end{prop}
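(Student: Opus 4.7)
The plan is to verify the clauses of Definition~\ref{d:type I} for the countable family $\EE=\{E_{1/n}\mid n\ge 1\}$, totally ordered by inclusion (as in Remark~\ref{r:type I}-(i)). Clauses (i), (ii), (iii)(a), (iii)(b), (iii)(c), (iii)(d) have already been established in the discussion immediately preceding the statement: (i) is Hypothesis~\ref{h:type I}-(i); symmetry and the $G_\delta$ property in (iii)(a),(b) come from Hypothesis~\ref{h:U_R,r}-(ii) and Corollary~\ref{c:E_r=bigcap_R>0 Int(U_R,r)}; and the sandwich $B_d(x,r)\subset E_r(x)\subset B_d(x,s)$ together with $B_d(x,r)\subset U_{R,r}(x)$ gives (ii), (iii)(c) and (iii)(d). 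So only (iii)(e) continuity and (iii)(f) the openness-on-fibers condition remain.

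For (iii)(e), by Lemma~\ref{l:cont. relation} it suffices to show that $E_s^{-1}(P_O)$ is open in $X$ for every open $O\subset X$. Given $x_0$ with some $y\in E_s(x_0)\cap O$, I pick $R,r>0$ with $U_{R,r}(y)\subset O$. Since $E_s$ is symmetric, $x_0\in E_s(y)$, and Hypothesis~\ref{h:type I}-(ii) applied to the pair $(y,x_0)$ yields $T,t>0$ with $U_{T,t}(x_0)\subset E_s\circ U_{R,r}(y)$. Any $x$ in this neighborhood of $x_0$ lies in $E_s(z)$ for some $z\in U_{R,r}(y)\subset O$, so $z\in E_s(x)\cap O$ by symmetry, and hence $U_{T,t}(x_0)\subset E_s^{-1}(P_O)$.

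The heart of the proof, and where I expect the main difficulty, is (iii)(f). Assume $E_p\circ E_q\supset E_r$, fix $x\in X$, and let $O\subset E_q(x)$ be open for the subspace topology. The goal is to prove that $\Omega=\{z\in E_r(x)\mid E_p(z)\cap O\ne\emptyset\}$ is open in $E_r(x)$. Given $z_0\in\Omega$, I pick $y_0\in E_p(z_0)\cap O$ and an open $V\subset X$ with $V\cap E_q(x)=O$. First, apply Hypothesis~\ref{h:type I}-(iii) with parameters $(r,s)=(p,q)$ at the point $y_0\in E_q(x)$ and the neighborhood $V$, which produces a neighborhood $W$ of $y_0$ in $X$ satisfying
\[
E_p(W)\cap E_p(E_q(x))\subset E_p(V\cap E_q(x))=E_p(O)\;.
\]
Then pick $R,r_0>0$ with $U_{R,r_0}(y_0)\subset W$ and apply Hypothesis~\ref{h:type I}-(ii) at $y_0$ to the point $z_0\in E_p(y_0)$ to obtain $T,t>0$ with $U_{T,t}(z_0)\subset E_p\circ U_{R,r_0}(y_0)\subset E_p(W)$.

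To close the argument, any $z\in U_{T,t}(z_0)\cap E_r(x)$ lies in $E_p(W)$ by the last inclusion and in $E_p(E_q(x))$ because $E_r\subset E_p\circ E_q$; hence $z\in E_p(O)$ by the displayed containment, which by symmetry of $E_p$ means $E_p(z)\cap O\ne\emptyset$, i.e.\ $z\in\Omega$. Thus $U_{T,t}(z_0)\cap E_r(x)$ is a neighborhood of $z_0$ in $E_r(x)$ contained in $\Omega$, giving (iii)(f). The principal technical hurdle is this coordination of the two clauses of Hypothesis~\ref{h:type I}: clause~(ii) is used to lift a neighborhood of $z_0$ through $E_p$ down to a neighborhood of $y_0$, while clause~(iii) is used to guarantee that any point of $E_r(x)$ obtainable from that lift via $E_p$ actually lies in $E_p(O)$.
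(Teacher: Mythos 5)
Your handling of the two substantive clauses, Definition~\ref{d:type I}-(iii)-(e) and (f), is correct and is essentially the paper's own argument: the paper observes that Hypothesis~\ref{h:type I}-(ii) says precisely that each $E_s$ is an open, hence (by symmetry) continuous, relation, and proves (f) by invoking Hypothesis~\ref{h:type I}-(iii) to get the neighborhood $W$ and then using that $E_r(W)$ is open; you do the same, merely inlining the openness of $E_p$ through the chain $U_{T,t}(z_0)\subset E_p(U_{R,r_0}(y_0))\subset E_p(W)$ instead of quoting it. The one genuine slip is your choice of family: $\EE=\{E_{1/n}\mid n\ge1\}$ does \emph{not} satisfy clause (iii)-(c), which requires, for every $r>0$, also some $F\in\EE$ with $B_d(x,r)\subset F(x)$ for all $x$; every member of your family has fibers contained in $B_d(x,2)$, so for large $r$ no such $F$ exists whenever $d$ takes values larger than $2$ (as it must in the intended applications, where $E_d^X$ has more than one class). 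The preceding discussion in the paper verifies (i), (ii), (iii)-(a)--(d) for the full family $\{E_r\mid r>0\}$, so you should take $\EE$ to be that family, or a countable subfamily indexed by $\Z$ that is both coinitial and cofinal (e.g.\ $\{E_{2^n}\mid n\in\Z\}$), as in Remark~\ref{r:type I}-(i). With that repair your proof is complete and coincides with the paper's.
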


\begin{proof} It only remains to show that \(d\) satisfies Proposition~\ref{l: type I, with EE'}-(e'),(f'). 
  
  Hypothesis~\ref{h:type I}-(ii) simply means that \(E_s\) is open
  and hence  continuous because it is symmetric.
  
  Let \(r,s,t>0\), \(x\in X\) and \(y\in E_s(x)\). Suppose that
  \(E_r\circ E_s\supseteq E_t\), and let \(V\) be a neighborhood of \(y\) in
  \(X\). By Hypothesis~\ref{h:type I}-(iii), there is some open
  neighborhood \(W\) of \(y\) in \(X\) such that
  \[ E_r(W)\cap E_t(x)\subseteq E_r(W)\cap E_r(E_s(x))\subseteq E_r(V\cap
  E_s(x)).
  \] 
  Since \(E_r(W)\) is open in \(X\), this proves that
  \(E_r\cap(E_s(x)\times E_t(x))\) is an open relation over \(E_s(x)\) and
  \(E_t(x)\).
\end{proof}

\begin{rem}\label{r:E_s is uniformly continuous} 
  In some applications, the following condition, which is stronger
  than Hypothesis~\ref{h:type I}-(ii), is satisfied: for all
  \(R,r,s>0\), there are some \(T,t>0\) such that \(U_{T,t}\circ E_s\subseteq
  E_s\circ U_{R,r}\). This means that each \(E_s\) is ``uniformly open''
  (or ``uniformly continuous,'' because it is symmetric).
\end{rem}

The metric equivalence relation \(d\) will be shown to be turbulent
under the following additional hypothesis.

\begin{hyp}\label{h:turbulent}
  \begin{enumerate}[(i)]
    
  \item \(E_d^X\) has more than one equivalence class;
      
  \item for each \(x\), \(y\) in \(X\) and each \(R,r>0\), there is
    \(s>0\) such that \(U_{R,r}(x)\cap E_s(y)\neq\emptyset\); and
      
  \item for each \(x\in X\) and each \(R,r>0\), there are \(S,s>0\), a
    dense subset \(\DD\subseteq U_{S,s}(x)\cap E_d^X(x)\), and a
    \(d\)-dense subset of \(\DD\) such that every pair of points in
    \(\DD\) can be joined by a \(d\)-continuous path in
    \(U_{R,r}(x)\).
    
  \end{enumerate}
\end{hyp}

\begin{lemma}\label{l:E_d^X is minimal}
  The relation \(E_d^X\) is minimal.
\end{lemma}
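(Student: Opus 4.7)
The plan is to show that $E_d^X(x)$ is dense in $X$ for every $x \in X$; this is exactly the definition of topological minimality given in Definition~\ref{defn:transitive relation}. Note that Hypothesis~\ref{h:turbulent}-(i) is not needed for this statement, and Hypothesis~\ref{h:turbulent}-(iii) should be unnecessary as well; the whole argument should rest on Hypothesis~\ref{h:turbulent}-(ii).

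The key observation is that the family $\{U_{R,r}(z)\mid R,r>0\}$ forms a neighborhood base of $z$ in the uniform topology on $X$, since $\UU$ has $\{U_{R,r}\}$ as a base of entourages. So to verify density of $E_d^X(x)$, it suffices to show that for every $z\in X$ and every $R,r>0$, the set $U_{R,r}(z)\cap E_d^X(x)$ is non-empty.

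For this, I would apply Hypothesis~\ref{h:turbulent}-(ii) with the roles of the two points being $z$ (as the center of the neighborhood) and $x$ (as the point whose equivalence class we are chasing). This yields some $s>0$ with
\[
U_{R,r}(z)\cap E_s(x)\neq\emptyset.
\]
By~\eqref{E_d^X=bigcup_r>0 E_r}, one has $E_s(x)\subset E_d^X(x)$, so any witness of this non-emptiness is simultaneously in the given basic neighborhood of $z$ and in the equivalence class $E_d^X(x)$. Since $z\in X$ and $R,r>0$ were arbitrary, $E_d^X(x)$ meets every non-empty open subset of $X$, i.e.\ $E_d^X(x)$ is dense.

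There is really no hard part here: the statement is essentially an unpacking of Hypothesis~\ref{h:turbulent}-(ii), which was designed so that every $E_s$-saturation of $x$ accumulates near any other point. The only thing worth being careful about is the direction of the arguments to Hypothesis~\ref{h:turbulent}-(ii) (choosing the ``center'' to be the point being approached and the ``target'' to be $x$), and the use of symmetry of $U_{R,r}$ from Hypothesis~\ref{h:U_R,r}-(ii) is not even required since the hypothesis is already stated in the form we need.
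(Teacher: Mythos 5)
Your argument is correct and is exactly the paper's proof, merely written out in full: the paper also deduces minimality directly from Hypothesis~\ref{h:turbulent}-(ii) together with the identity \(E_d^X=\bigcup_{r>0}E_r\) of~\eqref{E_d^X=bigcup_r>0 E_r}, using that the sets \(U_{R,r}(z)\) form a neighborhood base in the uniform topology. Your remarks that parts~(i) and~(iii) of Hypothesis~\ref{h:turbulent} are not needed, and that the quantifier order in~(ii) already gives the statement in the required form, are accurate.
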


\begin{proof} 
  This follows from Hypothesis~\ref{h:turbulent}-(ii)
  and~\eqref{E_d^X=bigcup_r>0 E_r}.
\end{proof}

\begin{lemma}\label{l:overline E_r(x) subset E_s(x)} 
  If \(r<s\), then, for all \(x\in X\), \(\overline{E_r(x)}\subseteq E_s(x)\).
\end{lemma}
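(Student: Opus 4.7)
The plan is to unwind the definition $E_s(x) = \bigcap_{R>0} U_{R,s}(x)$ and show that an arbitrary point $y \in \overline{E_r(x)}$ lies in every $U_{R,s}(x)$. So I fix $R > 0$ and set $\epsilon = s - r > 0$, aiming to produce the single inclusion $(x,y) \in U_{R,s}$; since $R$ was arbitrary, this will give $y \in E_s(x)$.

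The composition clause of Hypothesis~\ref{h:U_R,r}-(v), applied with exponents $r$ and $\epsilon$ summing to $s$, yields $U_{T,r} \circ U_{T,\epsilon} \subset U_{R,s}$ for $T = \phi(R,s)$. So it is enough to find a point $z \in E_r(x)$ with $(z,y) \in U_{T,\epsilon}$: then $(x,z) \in E_r \subset U_{T,r}$, and by Hypothesis~\ref{h:U_R,r}-(ii) the entourage $U_{T,\epsilon}$ is symmetric, so $(y,z) \in U_{T,\epsilon}$ translates to $(z,y) \in U_{T,\epsilon}$, giving $(x,y) \in U_{R,s}$.

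To produce such a $z$, I exploit that $y \in \overline{E_r(x)}$ — but this is the topology of the uniformity, so I need $U_{T,\epsilon}(y)$ to contain a genuine open neighborhood of $y$. This is exactly where Lemma~\ref{l:U_S,rsubset Int(U_R,r)} intervenes: choosing $S = \phi(\phi(T,\epsilon),\epsilon)$ gives $U_{S,\epsilon} \subset \Int(U_{T,\epsilon})$, and since $(y,y) \in \Delta_X \subset U_{S,\epsilon}$ by Hypothesis~\ref{h:U_R,r}-(i), the section $\Int(U_{T,\epsilon})(y)$ is an open neighborhood of $y$ in $X$. The assumption $y \in \overline{E_r(x)}$ now lets me pick $z \in E_r(x) \cap \Int(U_{T,\epsilon})(y)$, which completes the argument.

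The only mildly subtle point — the ``main obstacle'' if any — is the passage from the closure hypothesis (which is about the uniform topology on $X$) to the existence of a point $z$ inside an entourage section $U_{T,\epsilon}(y)$, since nothing in Hypothesis~\ref{h:U_R,r} forces the $U_{R,r}$ themselves to be open. Lemma~\ref{l:U_S,rsubset Int(U_R,r)}, combined with the reflexivity built into Hypothesis~\ref{h:U_R,r}-(i), resolves this cleanly; everything else is an immediate composition computation using Hypothesis~\ref{h:U_R,r}-(v) and the symmetry clause~(ii).
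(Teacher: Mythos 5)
Your proof is correct and follows essentially the same route as the paper's: for $y\in\overline{E_r(x)}$ and $R>0$, find a point of $E_r(x)\subset U_{\phi(R,s),r}(x)$ inside a neighborhood of $y$ contained in $U_{\phi(R,s),s-r}(y)$, then conclude $(x,y)\in U_{R,s}$ by the symmetry and composition clauses of Hypothesis~\ref{h:U_R,r}-(ii),(v). The only difference is that you justify the neighborhood step explicitly via Lemma~\ref{l:U_S,rsubset Int(U_R,r)}, whereas the paper relies implicitly on the standard fact that entourage sections are neighborhoods in the topology of the uniformity.
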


\begin{proof} 
  If \(y\in\overline{E_r(x)}\) and \(R>0\), then
  \(U_{\phi(R,s),s-r}(y)\cap U_{\phi(R,s),r}(x)\neq\emptyset\). So
  \(y\in\bigcap_{R>0}U_{R,s}=E_s(x)\) by
  Hypothesis~\ref{h:U_R,r}-(ii),(v).
\end{proof}
    
\begin{lemma}\label{l:Int(E_r(x))=emptyset} 
  For all \(x\in X\) and \(r>0\), \(\Int(E_r(x))=\emptyset\).
\end{lemma}

\begin{proof} Suppose that \(\Int(E_r(x))\neq\emptyset\). Then, for each
  \(y\in X\), the intersection \(E_s(y)\cap E_r(x)\neq\emptyset\) for some
  \(s>0\), by Lemma~\ref{l:E_d^X is minimal} and~\eqref{E_d^X=bigcup_r>0
    E_r}. Therefore \(y\in E_{r+s}(x)\) by~\eqref{E_s circ E_r subset
    E_r+s}. So \(X=E_d^X(x)\) by~\eqref{E_d^X=bigcup_r>0
    E_r}, contradicting Hypothesis~\ref{h:turbulent}-(i).
\end{proof}

\begin{prop}\label{p:d is turbulent} 
The relation \( E_d^X\) is turbulent.
\end{prop}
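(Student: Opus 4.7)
The plan is to verify the three properties of Definition~\ref{d:turbulent} for the metric relation $(d,E_d^X)$.

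Property~(i), that every equivalence class is dense, is exactly the content of Lemma~\ref{l:E_d^X is minimal}. Property~(ii), that every equivalence class is meager, follows by writing $E_d^X(x)=\bigcup_{n\in\Z_+}E_n(x)$---a countable union by~\eqref{E_d^X=bigcup_r>0 E_r} together with the monotonicity $E_r\subset E_s$ for $r\le s$ (itself a consequence of Hypothesis~\ref{h:U_R,r}-(iv))---and observing that Lemma~\ref{l:overline E_r(x) subset E_s(x)} gives $\overline{E_n(x)}\subset E_{n+1}(x)$ while Lemma~\ref{l:Int(E_r(x))=emptyset} gives $\Int(E_{n+1}(x))=\emptyset$, so each $E_n(x)$ is nowhere dense.

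Property~(iii) is the substantive step. Fix an open $U\subset X$, an entourage $V\subset E_d^X$ of the uniformity induced by $d$ (with basic entourages $V_\epsilon$ as in~\eqref{V_epsilon}), and a point $y\in U$; I want to show that $E_d^X(y,U,V)$ is somewhere dense. Pick $\epsilon>0$ with $V_\epsilon\subset V$ and $R,r>0$ with $U_{R,r}(y)\subset U$. Apply Hypothesis~\ref{h:turbulent}-(iii) at $y$ with these $R,r$ to obtain $S,s>0$, a dense subset $\DD\subset U_{S,s}(y)\cap E_d^X(y)$, and a $d$-dense $\DD'\subset\DD$ whose points, together with $y$, are pairwise joined by $d$-continuous paths in $U_{R,r}(y)$. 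Any such path has $d$-compact image, hence is totally $d$-bounded, so each path subdivides into successive segments of $d$-diameter less than $\epsilon$; the subdivision points form a $V_\epsilon$-chain in $U_{R,r}(y)\subset U$ joining the path endpoints. Thus $\{y\}\cup\DD'$ lies in the single local equivalence class $E_d^X(y,U,V)$.

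To conclude, I would check that $\DD'$ is dense in $U_{S,s}(y)$ in the topology of $X$: since the $d$-topology is finer than the topology of $X$, the $d$-density of $\DD'$ in $\DD$ implies topological density of $\DD'$ in $\DD$; minimality of $E_d^X$ (Lemma~\ref{l:E_d^X is minimal}) makes $E_d^X(y)$ topologically dense in $X$, so $\DD\subset U_{S,s}(y)\cap E_d^X(y)$ is topologically dense in $U_{S,s}(y)$, and transitivity of density gives the claim. Because Lemma~\ref{l:U_S,rsubset Int(U_R,r)} ensures $\Int(U_{S,s}(y))\neq\emptyset$, the closure $\overline{E_d^X(y,U,V)}$ contains a nonempty open set, and so the local equivalence class is somewhere dense. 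The main obstacle I anticipate is the careful use of Hypothesis~\ref{h:turbulent}-(iii): specifically, ensuring the center $y$ lies in the same local equivalence class as $\DD'$, which requires reading the path-connectedness clause of~(iii) as allowing $y$ itself to serve as a hub. A secondary point is the bookkeeping between the finer $d$-topology (where compactness of paths and $d$-density reside) and the topology of $X$ (where openness and somewhere-density are measured).
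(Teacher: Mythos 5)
Your proposal is correct and takes essentially the paper's route: (i) and (ii) are drawn from Lemma~\ref{l:E_d^X is minimal} and from Lemmas~\ref{l:overline E_r(x) subset E_s(x)} and~\ref{l:Int(E_r(x))=emptyset} with~\eqref{E_d^X=bigcup_r>0 E_r}, and (iii) from Hypothesis~\ref{h:turbulent}-(iii), which the paper invokes without detail; your subdivision of the $d$-continuous paths into $V_\epsilon$-chains inside $U_{R,r}(y)\subset U$, plus the passage from $d$-density to density (finer $d$-topology) and the nonempty interior of $U_{S,s}(y)$ via Lemma~\ref{l:U_S,rsubset Int(U_R,r)}, supplies exactly the omitted argument (just route the density claim through $\Int(U_{S,s}(y))$ rather than all of $U_{S,s}(y)$, since that set need not be open). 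The reading you flag is indeed the one needed---taken literally, Hypothesis~\ref{h:turbulent}-(iii) only yields that \emph{some} local class is somewhere dense, so anchoring the $d$-dense family to the centre $y$ is required to get the class of $y$ itself, a point the paper's one-line deduction leaves implicit.
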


\begin{proof} The relation \(E_d^X\) is minimal by Lemma~\ref{l:E_d^X
    is minimal}.  Each equivalence class of \(E_d^X\) is meager by
  Lemmas~\ref{l:overline E_r(x) subset E_s(x)}
  and~\ref{l:Int(E_r(x))=emptyset} and~\eqref{E_d^X=bigcup_r>0
    E_r}. Finally, the local equivalence classes of \(E_d^X\) are
  somewhere dense because of Hypothesis~\ref{h:turbulent}-(iii).
\end{proof}

Theorem~\ref{t:E_d^X is generically E_S infty Y-ergodic}, and
Propositions~\ref{p:d is of type I} and~\ref{p:d is turbulent} have
the following immediate consequence.

\begin{prop}\label{p:E_d^X is generically E_S infty Y-ergodic} 
  For any Polish \(S_\infty\)-space \(Y\), the relation \(E_d^X\) is
  generically \(E_{S_\infty}^Y\)-ergodic.
\end{prop}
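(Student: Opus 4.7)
The plan is to combine three results already established in the paper. The statement falls out as an immediate consequence of Theorem~\ref{t:E_d^X is generically E_S infty Y-ergodic}, once one verifies that the two hypotheses of that theorem are satisfied for the metric relation $d$ defined by~\eqref{d}.

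First, I would invoke Proposition~\ref{p:d is of type I}, which (under Hypotheses~\ref{h:U_R,r} and~\ref{h:type I}) gives that $d$ is a metric relation of type~I over $X$. This provides the structural condition required by Theorem~\ref{t:E_d^X is generically E_S infty Y-ergodic}, namely that the family $\{E_r\}_{r>0}$ (or a cofinal countable subfamily $\{E_{1/n}\}_{n\in\Z_+}$ to match Remark~\ref{r:type I}-(i)) satisfies all the approximation, continuity, and openness conditions of Definition~\ref{d:type I}.

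Second, I would invoke Proposition~\ref{p:d is turbulent}, which (under the additional Hypothesis~\ref{h:turbulent}) gives that $(\mathcal{U},E_d^X)$ is turbulent. By condition~(iii) of Definition~\ref{d:turbulent}, this means that \emph{every} local equivalence class is somewhere dense; in particular, the hypothesis of Theorem~\ref{t:E_d^X is generically E_S infty Y-ergodic} that ``any local equivalence class of $x$ is somewhere dense for residually many $x\in X$'' is satisfied trivially (it holds for every $x\in X$).

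With both hypotheses verified, Theorem~\ref{t:E_d^X is generically E_S infty Y-ergodic} applies directly and yields that $E_d^X$ is generically $E_{S_\infty}^Y$-ergodic for every Polish $S_\infty$-space $Y$. There is no real obstacle here, as the substantive work has been done in establishing the three cited results; the only point worth noting in writing the proof is the bookkeeping observation that the conclusion of Proposition~\ref{p:d is turbulent} gives the somewhere-dense property for \emph{all} $x$, which is strictly stronger than the ``residually many $x$'' formulation appearing in the hypothesis of Theorem~\ref{t:E_d^X is generically E_S infty Y-ergodic}.
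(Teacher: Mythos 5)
Your proposal is correct and matches the paper's approach exactly: the paper states the proposition as an immediate consequence of Theorem~\ref{t:E_d^X is generically E_S infty Y-ergodic} together with Propositions~\ref{p:d is of type I} and~\ref{p:d is turbulent}, which is precisely the combination you spell out. Your bookkeeping remark that turbulence gives the somewhere-dense property for every $x$, which is stronger than the ``residually many'' hypothesis actually needed, is accurate and a reasonable thing to record.
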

 

\begin{rem}\label{r: E_d^X not le_B E^Y_G}
  If we also assume that, for all \(r>0\) and residually many \(x,
  y\in X\), there exists \(s_0>0\) such that \(E_s(y)\setminus
  E_r(x)\) is dense in \(E_s(y)\) for all \( s>s_0\), then the proof
  of \cite[Theorem~8.2]{Hjorth2000} can be adapted to show that
  \(E_d^X\not\le_B E^Y_G\) for any Polish group \(G\) and any Polish
  \(G\)-space \(Y\). However the proof is not given because, in the
  applications, this is proved in \cite{AlvarezCandel:Non-reduction}.
\end{rem}

\section{The supremum metric relation}\label{s:supremum}

A concrete case of Example~\ref{ex:d_R} is \(C(\R)\), the space of real
valued continuous functions on \(\R\) endowed with the compact-open
topology, and the supremum metric relation, \(d_\infty\), which is
induced by the supremum norm, \(\|\ \|_\infty\), defined by
\(\|f\|_\infty=\sup_{x\in\R}|f(x)|\). For each \(R>0\), let \(d_R\) be the
pseudo-metric on \(C(\R)\) induced by the semi-norm \(\|\ \|_R\) given by
\(\|f\|_R=\sup_{|x|<R}|f(x)|\). Clearly, this set \(\{d_R\mid R>0\}\) of
pseudo-metrics satisfies the conditions~\eqref{d_R le d_S}
and~\eqref{x=y}, and induces the compact-open topology of
\(C(\R)\). Moreover \(d_\infty=\sup_{R>0}d_R\). In this case, each
\(U_{R,r}\) (respectively, \(E_r\)) consists of the pairs \((f,g)\) that
satisfy \(\|f-g\|_R<r\) (respectively, \(|f(x)-g(x)|<r\) for all
\(x\in\R\)).

Write \(E_\infty=E_{d_\infty}^{C(\R)}\), and
\(B_\infty(f,r)=B_{d_\infty}(f,r)\) for each \(f\in C(\R)\) and
\(r>0\). Then \(f E_\infty g\) if and only if \(f-g\) is bounded; in
particular, the bounded functions of \(C(\R)\) form an equivalence
class of \(E_\infty\).

Theorem~\ref{t: d_infty, d_GH and d_QI} for \((d_\infty,E_\infty)\)
follows from Propositions~\ref{p:d is of type I} and~\ref{p:d is
  turbulent}--\ref{p:E_d^X is generically E_S infty Y-ergodic} once
Hypotheses~\ref{h:U_R,r}--\ref{h:turbulent} are shown to hold.

\begin{rem}\label{r:d_infty} 
  Let \(C_b(\R)\subseteq C(\R)\) be the subset of bounded continuous
  functions. The sum of functions makes the space \(C(\R)\) into a
  Polish group, and \(C_b(\R)\) into a subgroup. The orbit relation of
  the action of \(C_b(\R)\) on \(C(\R)\) given by translation is
  \(E_\infty\) and there is no Polish topology on \(C_b(\R)\)
    with respect to which this action is continuous
    \cite{AlvarezCandel:Non-reduction}.
  
  For instance, consider the restriction of the compact-open topology
  to \(C_b(\R)\). Then the action of \(C_b(\R)\) on \(C(\R)\) is continuous,
  \(C_b(\R)\) is metrizable because \(C(\R)\) is completely metrizable,
  and \(C_b(\R)\) is separable because it contains \(C_0(\R)\), which is
  dense in \(C(\R)\) and separable (by the Stone-Weierstrass
  theorem). But \(C_b(\R)\) is not completely metrizable with the
  compact-open topology; in particular, it is not closed in \(C(\R)\).
  
  Consider now the topology on \(C_b(\R)\) induced by \(\|\
  \|_\infty\). Then the action of \(C_b(\R)\) on \(C(\R)\) is continuous,
  and \(C_b(\R)\) is completely metrizable; indeed, it is a Banach
  algebra with \(\|\ \|_\infty\). However \(C_b(\R)\) is not separable
  with \(\|\ \|_\infty\), which can be shown as follows. For each
  \(x\in\{\pm1\}^{\Z}\), let \(\tilde x\in C_b(\R)\) be the function whose
  graph is the union of segments between all consecutive points in the
  graph of \(x\). Then \(\{\,B_\infty(\tilde x,1)\mid
  x\in\{\pm1\}^{\Z}\,\}\) is an uncountable set of disjoint open
  subsets of \(C_b(\R)\). So \(C_b(\R)\) is not second countable, and
  therefore it is not separable.
\end{rem}

According to Example~\ref{ex:d_R}, the sets \(U_{R,r}\) satisfy
Hypothesis~\ref{h:U_R,r} and induce \(d_\infty\). In this case, the
inclusion~\eqref{E_s circ E_r subset E_r+s} becomes the equality
\begin{equation}\label{E_r circ E_s=E_r+s}
  E_r\circ E_s=E_{r+s}
\end{equation} for all \(r,s>0\); this holds because, if \(g\in
E_{r+s}(f)\), then
\[ 
  f+\frac{s}{r+s}(g-f)\in E_r(g)\cap E_s(f).
\]

It is well known that \(C(\R)\) is Polish (Hypothesis~\ref{h:type
  I}-(i)). The following lemma shows that Hypothesis~\ref{h:type
  I}-(ii) is satisfied in this case.

\begin{lemma}\label{l:U_R,r circ E_s=E_s circ U_R,r=U_R,r+s}
  For all \(R,r,s>0\), \(U_{R,r}\circ E_s=E_s\circ U_{R,r}=U_{R,r+s}\).
\end{lemma}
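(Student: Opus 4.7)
The plan is to reduce the two-sided equality to a single inclusion plus its reverse by a symmetry argument, then to prove each direction using (respectively) the triangle inequality with compactness, and an explicit interpolation between $f$ and $g$ patched with a compactly supported cutoff. Since all three relations $U_{R,r}$, $E_s$ and $U_{R,r+s}$ are symmetric, I have $(U_{R,r}\circ E_s)^{\op}=E_s^{\op}\circ U_{R,r}^{\op}=E_s\circ U_{R,r}$, so once I show $U_{R,r}\circ E_s=U_{R,r+s}$ the other equality follows by taking opposites (using that $U_{R,r+s}$ is self-opposite).

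For $U_{R,r}\circ E_s\subset U_{R,r+s}$: pick $(f,g)$ in the composite, so there is $h\in C(\R)$ with $(f,h)\in E_s$ (thus $|f(x)-h(x)|<s$ for every $x\in\R$) and $(h,g)\in U_{R,r}$ (thus $\|h-g\|_R<r$). For $|x|\le R$ the triangle inequality gives $|f(x)-g(x)|<s+\|h-g\|_R<s+r$. The point to be careful about is whether the strict inequality survives the supremum; this is immediate because $|f-g|$ is continuous on the compact interval $[-R,R]$, so $\|f-g\|_R=\max_{|x|\le R}|f(x)-g(x)|$ is attained at some $x_0$, and at $x_0$ the above estimate is strict.

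For the reverse inclusion $U_{R,r+s}\subset U_{R,r}\circ E_s$: given $(f,g)\in U_{R,r+s}$, set $c=\|f-g\|_R<r+s$ and $\alpha=s/(r+s)\in(0,1)$. On $[-R,R]$ I copy the trick used just above for $E_r\circ E_s=E_{r+s}$, defining the interpolant by $\alpha(g-f)$. However, unlike the $E$-case, $g-f$ need not be bounded on $\R$, so this convex combination cannot be used globally — this is the main obstacle. I resolve it by extending the increment to a continuous, compactly supported bump $\phi\in C(\R)$: set $\phi(x)=\alpha(g(x)-f(x))$ for $|x|\le R$, then on $[R,R+1]$ interpolate linearly with $\phi(x)=(R+1-x)\,\alpha(g(R)-f(R))$, analogously on $[-R-1,-R]$, and $\phi\equiv 0$ for $|x|\ge R+1$. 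By construction $\phi$ is continuous at $\pm R$, and $|\phi(x)|\le\alpha c<s$ everywhere.

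Finally I set $h=f+\phi$. Then $h\in C(\R)$, and $|f(x)-h(x)|=|\phi(x)|<s$ for every $x\in\R$, giving $(f,h)\in E_s$. On $[-R,R]$ we have $h-g=(f-g)+\alpha(g-f)=-(1-\alpha)(f-g)$, so $|h(x)-g(x)|=(1-\alpha)|f(x)-g(x)|\le(1-\alpha)c<r$, hence $\|h-g\|_R<r$ and $(h,g)\in U_{R,r}$. Thus $(f,g)\in U_{R,r}\circ E_s$, completing the proof.
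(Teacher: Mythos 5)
Your proof is correct and follows essentially the same route as the paper: the forward inclusion is the triangle inequality, and the reverse inclusion interpolates by the factor $s/(r+s)$ on $[-R,R]$ and then cuts the increment off to compact support (the paper multiplies the increment by a cutoff function $\lambda$ supported in a slightly larger interval and treats both compositions directly, while you use the opposite-relation symmetry and an explicit piecewise-linear extension --- cosmetic differences). The only slip is a harmless sign: on $[-R,R]$ one has $h-g=(1-\alpha)(f-g)$, not $-(1-\alpha)(f-g)$, and the absolute-value estimate you use is unaffected.
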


\begin{proof} 
  If \(S\ge R\), then, for all \(f,g,h\in C(\R)\),
    \[
      d_R(f,h) \le d_R(f,g)+ d_R(g,h) \le d_R(f,g) + d_S(g,h),
    \]
  because \(d_R\le d_S\). This implies that
\(U_{R,r}\circ U_{S,s}\) and \(U_{S,s}\circ U_{R,r}\) are both contained in
\(U_{R,r+s}\), which in turn implies that  \(U_{R,r}\circ E_s\) and
\(E_s\circ U_{R,r}\) are both contained in \(U_{R,r+s}\).
   
To prove the reverse inclusions, let \( f\in C(\R)\) and \(g\in
U_{R,r+s}(f)\). Then
\begin{align*}
  h_0&=f+\frac{s}{r+s}(g-f)\in U_{R,s}(f)\cap U_{R,r}(g),\\
  h_1&=f+\frac{r}{r+s}(g-f)\in U_{R,r}(f)\cap U_{R,s}(g).
\end{align*} 
By continuity, \(h_0\in U_{S,s}(f)\) and \(h_1\in U_{S,s}(g)\) for some
\(S>R\). Let \(\lambda:\R\to[0,1]\) be any continuous function such that
\(\supp \lambda \subseteq[-S,S]\) and \(\lambda\equiv1\) on \([-R,R]\). Then
\begin{align*} 
f+\lambda(h_0-f)&\in E_s(f)\cap U_{R,r}(g),\\
  g+\lambda(h_1-g)&\in U_{R,r}(f)\cap E_s(g),
\end{align*}
which implies that \(g\in(U_{R,r}\circ E_s)(f)\cap(E_s\circ
U_{R,r})(f)\).
\end{proof}

\begin{cor} 
  If \(R, S, r, s>0\), then \(U_{R,r}\circ  U_{S,s}=U_{\min\{R,S\},r+s}\).
\end{cor}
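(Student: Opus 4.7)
The plan is to deduce the corollary directly from Lemma~\ref{l:U_R,r circ E_s=E_s circ U_R,r=U_R,r+s} by splitting into the two cases $R\le S$ and $S\le R$, using that $\|\cdot\|_R\le\|\cdot\|_S$ whenever $R\le S$, and that each $E_t$ is contained in every $U_{T,t}$.

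For the forward inclusion, suppose without loss of generality that $R\le S$, so $\min\{R,S\}=R$. If $(f,h)\in U_{R,r}\circ U_{S,s}$, pick $g$ with $\|f-g\|_R<r$ and $\|g-h\|_S<s$; since $R\le S$ forces $\|g-h\|_R\le\|g-h\|_S<s$, the triangle inequality for $\|\cdot\|_R$ gives $\|f-h\|_R<r+s$, i.e.\ $(f,h)\in U_{R,r+s}=U_{\min\{R,S\},r+s}$. The case $S\le R$ is symmetric, using $\|\cdot\|_S\le\|\cdot\|_R$ applied to the first factor.

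For the reverse inclusion, again take $R\le S$ and let $(f,h)\in U_{R,r+s}$. By Lemma~\ref{l:U_R,r circ E_s=E_s circ U_R,r=U_R,r+s}, $U_{R,r+s}=E_s\circ U_{R,r}$, so there is some $g$ with $(f,g)\in U_{R,r}$ and $(g,h)\in E_s$. Since $E_s\subset U_{T,s}$ for every $T>0$ (directly from the definition $E_s=\bigcap_{T>0}U_{T,s}$), in particular $(g,h)\in U_{S,s}$, so $(f,h)\in U_{R,r}\circ U_{S,s}$. When $S\le R$, apply the dual identity $U_{S,r+s}=U_{S,s}\circ E_r$ from the same lemma and use $E_r\subset U_{R,r}$ to reverse the roles. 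There is no real obstacle here; the only point to be careful about is the asymmetry of the composition $U_{R,r}\circ U_{S,s}$ in the two pairs of parameters, which is handled by invoking the two equal expressions $U_{R,r}\circ E_s=E_s\circ U_{R,r}$ supplied by the preceding lemma.
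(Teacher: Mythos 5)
Your proposal is correct and is essentially the paper's own argument: the inclusion $U_{R,r}\circ U_{S,s}\subset U_{\min\{R,S\},r+s}$ is the triangle-inequality/monotonicity estimate~\eqref{U_R,r circ U_S,s subset U_min R,S,r+s}, and the reverse inclusion comes from Lemma~\ref{l:U_R,r circ E_s=E_s circ U_R,r=U_R,r+s} together with $E_t\subset U_{T,t}$, exactly as you do. The only (harmless) quibble is the order of factors in the composition, which with the paper's convention would lead you to pick the other of the two equal expressions $U_{R,r}\circ E_s=E_s\circ U_{R,r}$ in each case; since all the relations involved are symmetric, this changes nothing.
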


\begin{proof} The inclusion ``\(\subseteq\)'' is~\eqref{U_R,r circ U_S,s
    subset U_min R,S,r+s}, and ``\(\supseteq\)''
  follows from Lemma~\ref{l:U_R,r circ E_s=E_s circ U_R,r=U_R,r+s}.
\end{proof}

\begin{lemma}\label{l:E_r cap(E_s(f) times E_t(f)), d_infty} 
If
  \(T,r,s,t>0\), \(f\in C(\R)\) and \(g\in E_s(f)\) are such that
  \(U_{T,t'}(g)\subseteq U_{T,s}(f)\) for some \(t'>t\), then
        \begin{equation}\label{E_r cap(E_s(f) times E_t(f)), d_infty}
          U_{T,t+r}(g)\cap E_{r+s}(f)=E_r(U_{T,t}(g)\cap E_s(f)).
        \end{equation}
\end{lemma}

\begin{proof} 
The inclusion ``\(\supseteq\)'' follows from~\eqref{E_s circ E_r subset
  E_r+s} and Lemma~\ref{l:U_R,r circ E_s=E_s circ U_R,r=U_R,r+s}. To
prove ``\(\subseteq\)'', let \(h\in U_{T,t+r}(g)\cap
E_{r+s}(f)\). By~\eqref{E_r circ E_s=E_r+s} and Lemma~\ref{l:U_R,r circ
  E_s=E_s circ U_R,r=U_R,r+s}, there are \(g_0\in E_r(h)\cap
U_{T,t}(g)\) and \(f_0\in E_r(h)\cap E_s(f)\). By continuity, \(g_0\in
U_{T',t}(g)\subseteq U_{T',s}(f)\) for some \(T'>T\). Let
\(\lambda:\R\to[0,1]\) be any continuous function such that
\(\supp \lambda \subseteq [-T',T']\) and \(\lambda\equiv1\) on \([-T,T]\). Then
    \[ f_0+\lambda(g_0-f_0)\in E_r(h)\cap U_{T,t}(g)\cap E_s(f),
    \] and so \(h\in E_r(U_{T,t}(g)\cap E_s(f))\).
\end{proof}

\begin{cor}
The supremum metric relation \(d_\infty\) satisfies
Hypothesis~\ref{h:type I}-(iii).
\end{cor}

\begin{proof}
  Let \(r,s>0\), \(f\in C(\R)\), \(g\in E_s(f)\), and \(V\) a
  neighborhood \(g\) in \(C(\R)\). Since \(V\) can be chosen as small
  as desired, we can assume that \(V=U_{T,t}(g)\) for some \(T,t>0\).
  Since \(g\in E_s(f)\subseteq U_{T,s}(f)\), there is \(t'>0\) such that
  \(U_{T,t'}(g)\subseteq U_{T,s}(f)\), and we can also suppose that
  \(t<t'\), obtaining~\eqref{E_r cap(E_s(f) times E_t(f)), d_infty} by
  Lemma~\ref{l:E_r cap(E_s(f) times E_t(f)), d_infty}. But~\eqref{E_r
    cap(E_s(f) times E_t(f)), d_infty} gives the inclusion of
  Hypothesis~\ref{h:type I}-(iii) for \(W=V\) by~\eqref{E_r circ
    E_s=E_r+s} and Lemma~\ref{l:U_R,r circ E_s=E_s circ
    U_R,r=U_R,r+s}.
\end{proof}

The fact that \(E_\infty\) has more than one class
(Hypothesis~\ref{h:turbulent}-(i)) is obvious because
\(d_\infty(f,g)=\infty\) if \(f\) is bounded and \(g\)
unbounded. Hypotheses~\ref{h:turbulent}-(ii),(iii) is a consequence of
the following lemmas.

\begin{lemma} 
  For every \(f,g\in C(\R)\) and every \(R,r>0\), if \(s>d_{R'}(f,g)\) for some
  \(R'>R\), then \(U_{R,r}(f)\cap E_s(g)\neq\emptyset\).
\end{lemma}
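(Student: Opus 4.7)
The plan is to construct $h \in U_{R,r}(f) \cap E_s(g)$ explicitly by interpolating between $f$ and $g$ using a continuous cut-off function. The idea is that on a large ball we want $h$ to agree with $f$ (so that $h$ is close to $f$ on $[-R,R]$), while outside that ball we want $h$ to coincide with $g$ (so that $h \in E_s(g)$ trivially there). The hypothesis $d_{R'}(f,g) < s$ for some $R' > R$ guarantees that in the transition region the convex combination stays within distance $s$ of $g$.

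Concretely, I would choose a continuous function $\lambda : \R \to [0,1]$, supported in $[-R', R']$, with $\lambda \equiv 1$ on $[-R, R]$, and set
\[
    h = g + \lambda \cdot (f - g) \in C(\R)\;.
\]
On $[-R, R]$ we have $h = f$, so $d_R(h, f) = 0 < r$, which gives $h \in U_{R,r}(f)$.

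For $h \in E_s(g)$, I would verify the pointwise strict inequality $|h(x) - g(x)| < s$ for every $x \in \R$. If $|x| > R'$, then $\lambda(x) = 0$, so $h(x) = g(x)$ and the inequality is immediate. If $|x| \le R'$, then
\[
    |h(x) - g(x)| = \lambda(x)\,|f(x) - g(x)| \le |f(x) - g(x)|\;,
\]
and by continuity of $f - g$ on the compact interval $[-R', R']$, the supremum $\sup_{|x| \le R'}|f(x) - g(x)|$ equals $d_{R'}(f,g)$, which is strictly less than $s$ by hypothesis.

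There is no real obstacle here; the argument is a routine bump-function gluing. The only point deserving care is the strictness of the inequality defining $E_s$, but this is handled by observing that the sup over the open ball $|x| < R'$ coincides with the sup over the closed ball by continuity, so $d_{R'}(f,g) < s$ yields the uniform bound $|h(x) - g(x)| \le d_{R'}(f,g) < s$ on the support of $\lambda$.
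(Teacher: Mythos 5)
Your proposal is correct and is essentially the paper's own argument: the paper takes the same $\lambda$ supported in $[-R',R']$ with $\lambda\equiv1$ on $[-R,R]$ and checks that $g+\lambda(f-g)\in U_{R,r}(f)\cap E_s(g)$. Your extra remark that the supremum over the closed interval equals $d_{R'}(f,g)$ by continuity just makes explicit the detail the paper leaves tacit.
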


\begin{proof} Let \(\lambda:\R\to[0,1]\) be a continuous function such
  that \(\supp \lambda \subseteq [-R',R']\) and \(\lambda\equiv1\) on
  \([-R,R]\). Then \(g+\lambda(f-g)\in U_{R,r}(f)\cap E_s(g)\).
\end{proof}

\begin{lemma} For every \(R,r>0\) and every \(f\in C(\R)\), the set
\(U_{R,r}(f)\cap E_\infty(f)\) is \(d_\infty\)-path connected.
\end{lemma}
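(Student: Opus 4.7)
The plan is to connect any two functions $g_0, g_1 \in U_{R,r}(f)\cap E_\infty(f)$ by the straight-line segment
\[
  h_t = (1-t)g_0 + t g_1, \qquad t\in[0,1].
\]
The verification then reduces to three short checks, each exploiting convexity either of $E_\infty(f)$, of the open ball $U_{R,r}(f)$, or of the supremum norm.

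First I would check that $h_t\in E_\infty(f)$ for every $t$. Since $g_0,g_1\in E_\infty(f)$, both differences $g_0-f$ and $g_1-f$ are bounded on $\R$, so
\[
  h_t - f = (1-t)(g_0-f) + t(g_1-f)
\]
is bounded, which means $h_t\in E_\infty(f)$. Next I would check $h_t\in U_{R,r}(f)$ using the convexity of the open ball in the seminorm $\|\cdot\|_R$:
\[
  \|h_t - f\|_R \le (1-t)\|g_0-f\|_R + t\,\|g_1 - f\|_R < (1-t)r + tr = r.
\]

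Finally, I would verify that $t\mapsto h_t$ is continuous in $d_\infty$. Because $g_0$ and $g_1$ lie in the same equivalence class of $E_\infty$, the triangle inequality gives
\[
  \|g_1-g_0\|_\infty \le \|g_1-f\|_\infty + \|f-g_0\|_\infty < \infty,
\]
and therefore, for all $s,t\in[0,1]$,
\[
  d_\infty(h_t,h_s) = |t-s|\cdot\|g_1-g_0\|_\infty \xrightarrow[s\to t]{} 0.
\]
This shows that $t\mapsto h_t$ is a $d_\infty$-continuous path inside $U_{R,r}(f)\cap E_\infty(f)$ joining $g_0$ to $g_1$. There is no real obstacle here: the lemma is essentially a convexity statement, and the only point requiring care is that finiteness of $d_\infty(h_t,h_s)$ is guaranteed by $g_0,g_1$ belonging to a common $E_\infty$-class.
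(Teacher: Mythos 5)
Your proof is correct and is essentially the paper's argument: both use linear interpolation and convexity of $U_{R,r}(f)$ and of the $E_\infty$-class, the only cosmetic difference being that the paper joins each $g$ to the basepoint $f$ by $t\mapsto tf+(1-t)g$, whereas you join two arbitrary points directly. All three of your checks (boundedness of $h_t-f$, the seminorm estimate $\|h_t-f\|_R<r$, and $d_\infty$-continuity via finiteness of $\|g_1-g_0\|_\infty$) are valid.
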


\begin{proof} For every \(g\in U_{R,r}(f)\cap E_\infty(f)\), the mapping
\(t\mapsto tf+(1-t)g\) defines a \(d_\infty\)-continuous path in
\(U_{R,r}(f)\cap E_\infty(f)\) from \(g\) to \(f\).
\end{proof}

\begin{rem}\label{r:B_infty(f,r) is not Polish} The symmetric
  relations over \(C(\R)\) with fibers the balls \(B_\infty(f,r)\)
  cannot be used instead of the relations \(E_r\) to show that
  \(d_\infty\) is of type~I. For instance, each ball \(B_\infty(f,r)\)
  is not \(G_\delta\) in \(C(\R)\); otherwise it would be Polish, and
  therefore it would be a Baire space with the induced topology. But
  \(\emptyset\) is residual in \(B_\infty(f,r)\) for all \(r>0\), as
  the following argument shows. Let \((r_n)\) and \((R_n)\) be
  sequences such that \(0<r_n\uparrow r\) and
  \(0<R_n\uparrow\infty\). For each \(n\), let \(U_n\) be the set of
  functions \(g\in B_\infty(f,r)\) such that
    \[ 
      \sup_{|x|>R_n}|f(x)-g(x)|>r_n.
    \] The the sets \(U_n\) are open and dense in
\(B_\infty(f,r)\) and their intersection is empty.
\end{rem}

\section{The Gromov space}\label{s:Gromov sp}

In this section, we recall from
  \cite{AlvarezCandel:Non-reduction} some basic definitions and
  properties concerning the Gromov space, and also prove some new
  results.

Let \(M\) be a metric space and let \(d_M\), or simply \(d\), be its
distance function. The \textit{Hausdorff distance} between two
non-empty subsets, \(A,B\subseteq M\), is given by
  \[ 
  H_d(A,B)=\max\biggl\{\sup_{a\in A}\inf_{b\in B}d(a,b),
  \sup_{b\in B}\inf_{a\in A}d(a,b)\biggr\}.
  \] 
  Note that \(H_d(A,B)=H_d(\overline{A},\overline{B})\), and
  \(H_d(A,B)=0\) if and only if \(\overline{A}=\overline{B}\). Also,
  it is well known and easy to prove that \(H_d\) satisfies the
  triangle inequality, and that its restriction to the set of
  non-empty compact subsets of \(M\) is finite valued, and defines
  there a complete metric if \(M\) is complete.

Let \(M\) and \(N\) be arbitrary non-empty metric spaces. A metric on \(M\sqcup
N\) is called \textit{admissible} if its restrictions to \(M\) and \(N\)
are \(d_M\) and \(d_N\), where \(M\) and \(N\) are identified with their
canonical injections in \(M\sqcup N\). The \textit{Gromov-Hausdorff
distance} (or \textit{GH distance}) between \(M\) and \(N\) is defined by 
  \[ 
    d_{GH}(M,N)=\inf_dH_d(M,N),
  \] where the infimum is taken over all admissible metrics \(d\) on
\(M\sqcup N\).  It is well known that
\(d_{GH}(M,N)=d_{GH}(\overline{M},\overline{N})\), where \(\overline{M}\)
and \(\overline{N}\) denote the completions of \(M\) and \(N\),
\(d_{GH}(M,N)=0\) if \(\overline{M}\) and \(\overline{N}\) are
isometric, \(d_{GH}\) satisfies the triangle inequality, and
\(d_{GH}(M,N)<\infty\) if \(\overline{M}\) and \(\overline{N}\) are compact.

There is also a pointed version of \(d_{GH}\) which satisfies analogous
properties: the (\textit{pointed}) \textit{Gromov-Hausdorff distance}
(or \textit{GH distance}) between two pointed metric spaces, \((M,x)\)
and \((N,y)\), is defined by
  \begin{equation}\label{d_GH(M,x;N,y)} d_{GH}(M,x;N,y) =
\inf_d\max\{d(x,y),H_d(M,N)\},
\end{equation} where the infimum is taken over all admissible
metrics  \(d\) on
\(M\sqcup N\).

If \(X\) is any metric space and \(f:M\to X\) and \(g:N\to X\) are isometric
injections, then it is also well known that
\begin{align} 
  d_{GH}(M,N)&\le H_{d_X}(f(M),g(N)),\notag\\
  d_{GH}(M,x;N,y)&\le\max\{d_X(f(x),g(y)),H_{d_X}(f(M),g(N))\};\label{d_GH(M,x;N,y)
    le ...}
\end{align} 
indeed, these inequalities follow by considering, for each
\(\epsilon>0\), the unique admissible metric \(d_\epsilon\) on \(M\sqcup N\)
satisfying, for all \(u\in M\) and \(v\in N\),
  \[ 
  d_\epsilon(u,v)=d_X(f(u),g(v))+\epsilon.
  \]

 A metric space, or its distance function, is called \textit{proper} (or \textit{Heine-Borel})
 if every open ball has compact closure. This condition is equivalent
 to the compactness of the closed balls, which means that the distance
 function to a fixed point is a proper function. Any proper metric
 space is complete and locally compact, and its cardinality is not
 greater than the cardinality of the continuum. Therefore it may be assumed that their
 underlying sets are subsets of \(\R\). With this assumption, it
 makes sense to consider the set \(\MM_*\) of isometry classes, \([M,x]\),
 of pointed proper metric spaces, \((M,x)\). The set \(\MM_*\) is endowed
 with a topology introduced by M.~Gromov~\cite[Section~6]{Gromov1999},~\cite{Gromov1981}, which
 can be described as follows.

For a metric space \(X\), two subspaces, \(M,N\subseteq X\), two points,
\(x\in M\) and \(y\in N\), and a real number \(R>0\), let \(
H_{d_X,R}(M,x;N,y)\) be given by
  \[ H_{d_X,R}(M,x;N,y)=\max\biggl\{\sup_{u\in
B_M(x,R)}d_X(u,N),\sup_{v\in B_N(y,R)}d_X(v,M)\biggr\}.
  \] Then, for \(R,r>0\), let \(U_{R,r}\subseteq\MM_*\times\MM_*\) denote
the subset of pairs \(([M,x],[N,y])\) for which there is an admissible
metric, \(d\), on \(M\sqcup N\) so that 
  \[ 
    \max\{d(x,y),H_{d,R}(M,x;N,y)\}<r.
  \] 

The following lemma is obtained exactly like~\eqref{d_GH(M,x;N,y) le ...}.

\begin{lemma}\label{l:U_R,r} 
  For  \(([M,x],[N,y])\in\MM_*\times\MM_*\)
  to be in \(U_{R,r}\) it suffices that there exists a metric space,
  \(X\), and isometric injections, \(f:M\to X\) and \(g:N\to X\), such that
  \[
  \max\bigl\{d_X(f(x),g(y)),H_{d_X,R}(f(M),f(x);g(N),g(y))\bigr\}<r.
  \]
\end{lemma}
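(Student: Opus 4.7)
The plan is to mimic the construction behind inequality~\eqref{d_GH(M,x;N,y) le ...}, gluing $M$ and $N$ through $X$ via $f$ and $g$ to produce an admissible metric on $M\sqcup N$ that witnesses membership in $U_{R,r}$. Given the data of the statement, let $\delta=r-\max\{d_X(f(x),g(y)),\ H_{d_X,R}(f(M),f(x);g(N),g(y))\}$, which is strictly positive by hypothesis. For each $\epsilon\in(0,\delta)$, define $d_\epsilon$ on $M\sqcup N$ by $d_\epsilon|_{M\times M}=d_M$, $d_\epsilon|_{N\times N}=d_N$, and
\[
d_\epsilon(u,v)=d_X(f(u),g(v))+\epsilon
\]
for $u\in M$, $v\in N$, extended symmetrically.

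The first step is to verify that $d_\epsilon$ is an admissible metric. By construction its restriction to $M$ and $N$ agrees with $d_M$ and $d_N$, and symmetry is immediate. Positivity of cross-distances is guaranteed by $\epsilon>0$. The triangle inequality splits into cases according to how many of the three points are in $M$ versus $N$; the pure cases are built in, and each mixed case (two points in $M$ and one in $N$, or conversely, or one in each with a third anywhere) reduces to the triangle inequality in $X$ once one uses that $f$ and $g$ are isometric and that the $\epsilon$-shifts either cancel on each side of the inequality or leave extra slack.

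Next I would bound each of the two quantities in the definition of $U_{R,r}$. For the basepoints, $d_\epsilon(x,y)=d_X(f(x),g(y))+\epsilon<d_X(f(x),g(y))+\delta\le r$. For the Hausdorff quantity, note that $B_M(x,R)$ computed in $d_\epsilon$ coincides with the ball computed in $d_M$, since $d_\epsilon|_{M\times M}=d_M$; and for any $u\in B_M(x,R)$,
\[
d_\epsilon(u,N)=\inf_{v\in N}\bigl(d_X(f(u),g(v))+\epsilon\bigr)=d_X(f(u),g(N))+\epsilon.
\]
Because $f$ is isometric, $f(B_M(x,R))=B_{f(M)}(f(x),R)$, so $\sup_{u\in B_M(x,R)}d_X(f(u),g(N))\le H_{d_X,R}(f(M),f(x);g(N),g(y))$. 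A symmetric argument controls $\sup_{v\in B_N(y,R)}d_\epsilon(v,M)$. Combining,
\[
H_{d_\epsilon,R}(M,x;N,y)\le H_{d_X,R}(f(M),f(x);g(N),g(y))+\epsilon<r.
\]
Hence $\max\{d_\epsilon(x,y),H_{d_\epsilon,R}(M,x;N,y)\}<r$, showing $([M,x],[N,y])\in U_{R,r}$.

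I do not anticipate a substantive obstacle: the only things that require a bit of care are the mixed cases of the triangle inequality for $d_\epsilon$ and the observation that $f$ being an isometric injection identifies $B_M(x,R)$ with $B_{f(M)}(f(x),R)$, which is what lets the $H_{d_X,R}$ bound transfer to the ambient $M\sqcup N$.
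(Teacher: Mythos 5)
Your proof is correct and follows exactly the route the paper intends: the paper proves this lemma ``exactly like''~\eqref{d_GH(M,x;N,y) le ...}, namely by introducing the admissible metrics $d_\epsilon(u,v)=d_X(f(u),g(v))+\epsilon$ on $M\sqcup N$, which is precisely your construction. Your verification of admissibility and of the two bounds (using that $f$, $g$ isometric identifies $B_M(x,R)$ with $B_{f(M)}(f(x),R)$) matches the paper's argument.
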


The following notation will be used: for a relation \(E\) on \(\MM_*\)
and \([M,x]\in \MM_*\), \(E([M,x])\) will be simply written as
\(E(M,x)\), and for a metric relation \(d\) on \(\MM_*\) and
\([M,x],[N,y]\in\MM_*\), \(d([M,x],[N,y])\) will be denote by
\(d(M,x;N,y)\).

The sets \(U_{R,r}\) satisfy Hypothesis~\ref{h:U_R,r}
  \cite[Lemma~2.1]{AlvarezCandel:Non-reduction}; in particular,
  Hypothesis~\ref{h:U_R,r}-(v) is satisfied as stated in the following
  lemma.

\begin{lemma}[{\cite[Lemma~2.1-(v)]{AlvarezCandel:Non-reduction}}]\label{l:U_S,r circ U_S,s subset U_R,r+s} 
  If \(R, r, s>0\), then \(U_{S,r}\circ U_{S,s}\subseteq U_{R,r+s}\), where
  \(S=R+2\max\{r,s\}\).
\end{lemma}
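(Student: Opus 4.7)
The plan is to unpack the relations, glue the two intermediate admissible metrics along the common factor, and estimate carefully to extract the required $S$.

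Suppose $([M,x],[N,y]) \in U_{S,r}\circ U_{S,s}$. Then there exists $[P,z] \in \MM_*$ together with admissible metrics $d_1$ on $M \sqcup P$ and $d_2$ on $P \sqcup N$ such that
\[
  \max\{d_1(x,z), H_{d_1,S}(M,x;P,z)\} < s
  \quad\text{and}\quad
  \max\{d_2(z,y), H_{d_2,S}(P,z;N,y)\} < r.
\]
First I would glue them by defining a pseudo-metric $d$ on the disjoint union $X_0 = M \sqcup P \sqcup N$: on $M \sqcup P$ set $d = d_1$, on $P \sqcup N$ set $d = d_2$, and for $u \in M$, $v \in N$ put
\[
  d(u,v) = \inf_{p \in P}\bigl(d_1(u,p) + d_2(p,v)\bigr).
\]
A routine check (four cases for triangle inequality, depending on which of $M,P,N$ the three points belong to) shows this is a genuine pseudo-metric whose restrictions to $M$, $P$, $N$ are $d_M$, $d_P$, $d_N$. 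Taking the metric quotient of $(X_0,d)$ (or passing to the Kuratowski embedding of the completion) produces a metric space $X$ with isometric injections $f\colon M\to X$ and $g\colon N\to X$; isometricity follows because $d_M$ and $d_N$ are already genuine metrics, so no two distinct points of $M$ (respectively $N$) are identified. This lets me invoke Lemma~\ref{l:U_R,r}.

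The distance between the basepoints is immediate from the triangle inequality through $z$:
\[
  d_X(f(x),g(y)) \le d(x,y) \le d_1(x,z) + d_2(z,y) < s + r.
\]
For the Hausdorff estimate at scale $R$, take $u \in B_M(x,R)$. Since $R < S$, also $u \in B_M(x,S)$, so the first admissibility bound yields $p \in P$ with $d_1(u,p) < s$. Here is the crucial step: I must check $p \in B_P(z,S)$ to be able to use the second admissibility bound. By the triangle inequality,
\[
  d_1(z,p) \le d_1(z,x) + d_1(x,u) + d_1(u,p) < s + R + s = R + 2s \le S,
\]
so $p \in B_P(z,S)$, and hence there exists $v \in N$ with $d_2(p,v) < r$. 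Combining, $d(u,v) \le d_1(u,p) + d_2(p,v) < s + r$. The symmetric argument, starting from $v \in B_N(y,R)$ and picking $p \in P$ with $d_2(v,p) < r$, uses $d_2(z,p) < r + R + r = R + 2r \le S$ to land $p$ in $B_P(z,S)$ and then find $u \in M$ with $d_1(p,u) < s$. Both directions give $H_{d,R}(M,x;N,y) < r+s$, and Lemma~\ref{l:U_R,r} concludes $([M,x],[N,y]) \in U_{R,r+s}$.

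The main obstacle is simply calibrating the two triangle-inequality estimates that determine $S$: one needs $R + 2s$ and $R + 2r$ both $\le S$, forcing the choice $S = R + 2\max\{r,s\}$, and this is tight. Verifying that the glued object $(X_0,d)$ is a pseudo-metric and that the quotient gives genuine isometric embeddings of $M$ and $N$ is standard but is the only place where one must be slightly careful about the distinction between metrics and pseudo-metrics in the definition of $U_{R,r}$.
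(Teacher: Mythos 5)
Your proof is correct and follows essentially the same route as the paper: glue the two admissible metrics along $P$ by the infimum formula, bound the basepoint distance through $z$, and use the triangle inequality $d_1(z,p)<R+2s\le S$ (resp.\ $d_2(z,p)<R+2r\le S$) to place the intermediate point in $B_P(z,S)$, which is exactly where the value $S=R+2\max\{r,s\}$ comes from in the paper's argument. The only cosmetic polish needed is in the last step: to get the strict inequality $H_{d,R}(M,x;N,y)<r+s$ for the supremum, bound each term by the fixed constants $s_0:=H_{d_1,S}(M,x;P,z)<s$ and $r_0:=H_{d_2,S}(P,z;N,y)<r$, so the supremum is $\le s_0+r_0<r+s$ (as the paper does); your treatment of the pseudo-metric/quotient issue via Lemma~\ref{l:U_R,r} is fine and, if anything, slightly more careful than the paper, which simply writes down the glued admissible metric on $M\sqcup N$ directly.
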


Since the sets \(U_{R,r}\) satisfy Hypothesis~\ref{h:U_R,r}, they form
a base of entourages for a metrizable uniformity on \(\MM_*\). Endowed
with the induced topology, \(\MM_*\) is what is called the
\textit{Gromov space}. It is well known that \(\MM_*\) is a Polish
space (see \textit{e.g.}\ Gromov~\cite{Gromov1999} or
Petersen~\cite{Petersen1998}); in particular, the set of the pointed
finite metric spaces with \(\Q\)-valued metrics is a countable dense
subset of \(\MM_*\).

Some relevant subspaces of \(\MM_*\) are defined by the following
classes of metric spaces: proper ultrametric spaces, proper length
spaces, connected complete Riemannian manifolds, connected locally
compact simplicial complexes, connected locally compact graphs and
finitely generated groups (via their Cayley graphs).

The following (generalized) dynamics can be considered on \(\MM_*\):
  \begin{description}
    
  \item[The canonical metric relation] The \textit{canonical
      partition} \(E_{\text{\rm can}}\) is defined by varying the
    distinguished point; \textit{i.e.}, as a relation, \(E_{\text{\rm can}}\)
    consists of all the pairs \(([M,x],[M,y])\), \(M\) a proper metric
    space \(M\) and \(x,y\in M\). There is a canonical map
    \(M\to\MM_*\), \(x\mapsto[M,x]\), which defines an embedding
    \(\Isom(M)\backslash M\to\MM_*\) whose image is \(E_{\text{\rm
        can}}(M,x)\) for any \(x\in M\).
    Note that \(\MM_*/E_{\text{\rm can}}\) can be identified to the
    set of isometry classes of proper metric spaces. 
    
  \item[The GH metric relation] It is defined by the pointed GH
    distance \(d_{GH}\). The notation \(E_{GH}=E^{\MM_*}_{d_{GH}}\) will
    be used. Since \(E_{\text{\rm can}}\subseteq E_{GH}\), the quotient
    set \(\MM_*/E_{GH}\) can be identified to the set of classes of
    proper metric spaces defined by the relation of being at finite GH
    distance.
    
\item[The Lipschitz metric relation] The \textit{Lipschitz
    partition}, \(E_{\text{\rm Lip}}\), is defined by the existence of
  pointed bi-Lipschitz bijections. It is induced by the
  \textit{Lipschitz metric relation}, \(d_{\text{\rm Lip}}\), which is
  defined by using the infimum of the logarithms of the dilation constants of
  bi-Lipschitz bijections.
        
\item[The QI metric relation]  The \textit{quasi-isometric partition}
  (or \textit{QI partition}), \(E_{QI}\), is the smallest equivalence
  relation over \(\MM_*\) that contains \(E_{GH}\cup E_{\text{\rm
      Lip}}\). It is induced by the \textit{quasi-isometric metric
    relation} (or \textit{QI relation}), \(d_{QI}\), defined as the
  largest metric relation over \(\MM_*\) smaller than both \(d_{GH}\) and
  \(d_{\text{\rm Lip}}\) (\textit{cf.}\ \cite[Lemma~6]{Plastria2009}). The
  quotient set \(\MM_*/E_{QI}\) can be identified to the set of
  quasi-isometry classes of proper metric spaces.
    
\item[The dilation flow] It is the multiplicative flow defined by
  \(\lambda\cdot[M,x]=[\lambda M,x]\), where \(\lambda M\) denotes \(M\)
  with its metric multiplied by \(\lambda\). This flow is used to define
  the asymptotic and tangent cones.
        
  \end{description}

In Sections~\ref{s:GH} and~\ref{s:QI}, we will study the GH and QI~metric
relations and prove Theorem~\ref{t: d_infty, d_GH and d_QI} for them. 
Some technical results and concepts related to the definition of
\(\MM_*\) to  be used in those sections are given
presently.

\begin{lemma}\label{l:d is proper} 
  Let \([M,x],[N,y]\in\MM_*\) and \(r>0\). If \(d\) is an admissible metric
  on \(M\sqcup N\) such that \(d(x,y)<r\) and \(H_d(M,N)<r\), then \(d\) is
  proper.
\end{lemma}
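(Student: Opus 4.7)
The plan is to show that every closed $d$-ball in $(M\sqcup N,d)$ is sequentially compact, which is equivalent to properness since $(M\sqcup N,d)$ is a metric space. So fix $z_0\in M\sqcup N$ and $R>0$, take an arbitrary sequence $(w_n)$ in the closed ball $\bar B_d(z_0,R)$, and try to extract a convergent subsequence.

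The first step is a pigeonhole: since $M\sqcup N$ is the disjoint union of $M$ and $N$, after passing to a subsequence we may assume that all of the $w_n$ lie in $M$, or all in $N$. By the symmetric role of $M$ and $N$ it suffices to handle the first case. The admissibility of $d$ gives $d|_{M\times M}=d_M$, so
\[
  d_M(w_n,w_m)=d(w_n,w_m)\le 2R
\]
for all $n,m$. Thus $(w_n)$ is $d_M$-bounded in $M$, and since $M$ is proper, it has a further subsequence converging in $(M,d_M)$ to some point $w\in M$. Using $d|_{M\times M}=d_M$ once more, this convergence is also convergence in $(M\sqcup N,d)$, and continuity of $d(\cdot,z_0)$ ensures $w\in\bar B_d(z_0,R)$. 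This proves sequential compactness of the ball and hence that $d$ is proper.

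The argument is essentially a pigeonhole combined with the admissibility condition, so there is no real obstacle; the only point worth noting is that the role of the hypotheses $d(x,y)<r$ and $H_d(M,N)<r$ is to guarantee that $d$ is a genuine finite-valued metric tying $M$ and $N$ together in the first place (this is the setting in which $d$ arises when computing $U_{R,r}$), rather than to feed quantitatively into the properness argument. In particular, one does not need $r$ at all in the proof beyond the fact that the admissible metric $d$ takes real values so that $d$-bounded sets inside $M$ are $d_M$-bounded.
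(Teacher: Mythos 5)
Your proof is correct, and it is close in spirit to the paper's: both arguments reduce properness of $(M\sqcup N,d)$ to properness of $M$ and $N$ by splitting a $d$-ball into its $M$-part and its $N$-part. The paper does this in one line, observing that $d_N(y,v)\le d(x,y)+d(x,v)<r+d(x,v)$ for every $v\in N$, hence $B_d(x,R)\subset B_M(x,R)\sqcup B_N(y,R+r)$ for all $R>0$, a union of two sets with compact closure; the hypothesis $d(x,y)<r$ enters only to center the $N$-part at the distinguished point $y$, and $H_d(M,N)<r$ is not used there either. Your sequential version (pigeonhole onto $M$ or $N$, bound the mutual distances by $2R$, invoke properness of that piece) dispenses with the base points entirely, and your closing remark is accurate: any admissible metric on the disjoint union of two proper metric spaces is proper, so the quantitative hypotheses are not needed for this lemma; they reflect the situation in which such metrics $d$ actually arise (from the sets $U_{R,r}$ and the GH distance) rather than feeding into the properness argument. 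One small point worth making explicit in your write-up: the paper defines properness by asking that open balls have compact closure, while you prove that closed $d$-balls are sequentially compact; this suffices because the closure of $B_d(z_0,R)$ is a closed subset of the closed ball of radius $R$, and in a metric space sequential compactness of a subset is equivalent to its compactness.
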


\begin{proof} 
  For every \(v\in N\),
  \[ 
    d_N(y,v)\le d(x,y)+d(x,v)<r+d(x,v)\,
  \] 
  and so
  \[ 
  B_d(x,R)\subseteq B_M(x,R)\sqcup B_N(y,R+r)
  \] 
  for all \(R>0\). The statement follows from this because \(M\) and \(N\) are
  proper.
\end{proof}

\begin{lemma}\label{l:d'}
  Let \([M,x],[N,y],[P,z]\in \MM_*\) and \(R,r>0\). Suppose that the
  pointed metric spaces \((B_P(z,R+2r),z)\) and \((B_N(y,R+2r),y)\)
  are isometric, and that there is an admissible metric, \(d\), on
  \(M\sqcup N\) such that \(d(x,y)<r\) and
  \(H_{d,R}(M,x;N,y)<r\). Then there exists a proper admissible
  metric, \(d'\), on \(M\sqcup P\) such that \(d'(x,z)<r\) and
  \(H_{d',R}(M,x;P,z)<r\).
\end{lemma}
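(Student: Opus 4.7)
The plan is to transport the admissible metric $d$ on $M \sqcup N$ to an admissible metric $d'$ on $M \sqcup P$ via the pointed isometry $\phi:(B_P(z,R+2r),z) \to (B_N(y,R+2r),y)$ furnished by the hypothesis. Concretely, I would set $d'|_M = d_M$ and $d'|_P = d_P$, and for $u \in M$, $v \in P$ define
\[
d'(u,v) = \inf\{\,d(u,\phi(w)) + d_P(w,v) \mid w \in B_P(z,R+2r)\,\},
\]
extended symmetrically. This is morally the quotient pseudometric on $M \sqcup N \sqcup P$ obtained by identifying each $w$ with $\phi(w)$, but the explicit infimum formula is cleaner to estimate with.

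The first step is to check that $d'$ is a metric. The non-trivial mixed triangle inequality $d_M(u_1,u_2) \le d'(u_1,v) + d'(u_2,v)$ reduces, for any two reference points $w_1, w_2 \in B_P(z,R+2r)$, to
\[
d(u_1,u_2) \le d(u_1,\phi(w_1)) + d_N(\phi(w_1),\phi(w_2)) + d(\phi(w_2),u_2),
\]
where the middle term equals $d_P(w_1,w_2)$ since $\phi$ is an isometry; one then applies $d_P(w_1,w_2) \le d_P(w_1,v) + d_P(v,w_2)$ and takes infima. The symmetric inequality for $d_P(v_1,v_2)$ is analogous, and the remaining triangle inequalities are trivial. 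Separation of points follows from $d(u,\phi(w)) > 0$ for $u \in M$. The second step is to verify the quantitative bounds: $d'(x,z) \le d(x,\phi(z)) = d(x,y) < r$ by choosing $w = z$, and for $H_{d',R}(M,x;P,z)$ any $u \in B_M(x,R)$ admits some $v \in N$ with $d(u,v) < r$; the triangle inequality gives $d_N(y,v) < R + 2r$, so $v \in B_N(y,R+2r)$, and taking $w = \phi^{-1}(v) \in B_P(z,R+2r)$ yields a point with $d'(u,w) \le d(u,v) < r$. The reverse direction is symmetric.

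For properness, Lemma~\ref{l:d is proper} does not apply because we only control $H_{d',R}$ rather than the full Hausdorff distance, so I would argue directly that every $d'$-ball is relatively compact. The $M$-part of $B_{d'}(x,R')$ sits inside $B_M(x,R')$, which is relatively compact since $M$ is proper; the $P$-part sits inside $B_P(z, R' + r)$, because the triangle inequality gives $d_P(z,v) \le d'(z,x) + d'(x,v) < r + R'$ for any $v \in P$ in the ball, and this is relatively compact since $P$ is proper. The main obstacle in carrying this out is the mixed triangle inequality, which is exactly the step that forces the hypothesis to be about $B_P(z,R+2r)$ rather than $B_P(z,R)$: witnesses to $H_{d,R}(M,x;N,y) < r$ can lie at $d_N$-distance nearly $R+2r$ from $y$, so the isometry $\phi$ must be defined on the whole ball of that larger radius for the transport argument to close.
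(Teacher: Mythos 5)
Your proof is correct and takes essentially the same approach as the paper: both transport the admissible metric $d$ on $M\sqcup N$ to $M\sqcup P$ by gluing along the pointed isometry between the balls of radius $R+2r$, then verify $d'(x,z)<r$ and $H_{d',R}(M,x;P,z)<r$ by chasing witnesses through $\phi$, and establish properness directly by bounding $B_{d'}(x,S)$ inside a disjoint union of balls in $M$ and $P$. The only inessential difference is your explicit gluing formula, which omits the extra $d_M(u,u')$ hop that the paper inserts; and a small slip in your closing remark, since it is the verification of $H_{d',R}<r$ (needing $\phi^{-1}$ defined at witnesses up to $d_N$-distance $R+2r$ from $y$) rather than the mixed triangle inequality that forces the radius $R+2r$.
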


\begin{proof}
  Let \(A=B_M(x, R+2r)\), \(B=B_N(y,R+2r)\) and \(C=B_P(z,R+2r)\), and let
  \(\phi:(B,y)\to(C,z)\) be an isometry. Let \(d'\) be the admissible
  metric on \(M\sqcup P\) satisfying, for \(u\in M\) and \(w\in P\),
  \[
  d'(u,w)=\inf\{\,d_M(u,u')+d(u',v)+d_P(\phi(v),w)\mid u'\in A,\ v\in B\,\}.
  \]
  Note that, for \(u\in A\) and \(v\in B\), \(d'(u,\phi(v))=d(u,v)\);
  in particular, \(d'(x,z)<r\).
  
  For each \(u\in B_M(x,R)\), there is \(v\in N\) such that
  \(d(u,v)<r\). Since
  \[
  d_N(y,v)\le d(y,x)+d_M(x,u)+d(u,v)<R+2r,
  \]
this \(v\in B_N(y,R+2r)\). So \(d'(u,\phi(v))=d(u,v)<r\), and therefore
  \(d'(u,P)<r\). Similarly, \(d'(w,M)<r\) for all \(w\in B_P(z,R)\),
  obtaining \(H_{d',R}(M,x;P,z)<r\).
  
  For each \(S>0\) and \(w\in P\cap B_{d'}(x,S)\), there is \(v\in B\)
  such that \(d(x,v)+d_P(\phi(v),w)<S\). So
    \[
      d_P(z,w)\le d_P(z,\phi(v))+d_P(\phi(v),w)<R+2r+S,
    \]
  obtaining
    \[
      B_{d'}(x,S)\subseteq B_M(x,S)\sqcup B_P(z,R+2r+S).
    \]
    Hence \(\overline{B_{d'}(x,S)}\) is compact since \(M\) and \(P\)
    are proper. This shows that \(d'\) is proper.
\end{proof}

\section{The GH metric relation}\label{s:GH}

For each \(r>0\), let \(E_r\subseteq\MM_*\times\MM_*\)
be the symmetric relation whose fibers are
\(E_r(M,x)=\bigcap_{R>0}U_{R,r}(M,x)\), where \(U_{R,r}(M,x)\) is as
defined in Section~\ref{s:Gromov sp}. The notation
\(B_{GH}(M,x;r)=B_{d_{GH}}([M,x],r)\) will be used.

\begin{lemma}\label{l:B_{GH}(M,x;r)}
  If \(0<r<s\), then
    \[
      B_{GH}(M,x;r)\subseteq E_r(M,x)\subseteq B_{GH}(M,x;s).
    \]
\end{lemma}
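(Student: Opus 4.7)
The first inclusion $B_{GH}(M,x;r)\subset E_r(M,x)$ is immediate from the inequality $H_{d,R}(M,x;N,y)\le H_d(M,N)$ for every $R>0$, which holds because the restricted supremum in the definition of $H_{d,R}$ is taken over subsets $B_M(x,R)\subset M$ and $B_N(y,R)\subset N$. Explicitly, if $d_{GH}(M,x;N,y)<r$ is witnessed by an admissible metric $d$ on $M\sqcup N$ with $\max\{d(x,y),H_d(M,N)\}<r$, then $\max\{d(x,y),H_{d,R}(M,x;N,y)\}<r$ for every $R>0$, so $[N,y]\in U_{R,r}(M,x)$ for all $R$, yielding $[N,y]\in E_r(M,x)$.

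For the second inclusion, I will show the stronger statement that $d_{GH}(M,x;N,y)\le r$. The plan is to use the family of admissible metrics coming from the hypothesis and extract a single limiting admissible pseudo-metric that simultaneously realizes the bound. For each $n\in\Z_+$, the hypothesis $[N,y]\in U_{n,r}(M,x)$ yields an admissible metric $d_n$ on $M\sqcup N$ with $d_n(x,y)<r$ and $H_{d_n,n}(M,x;N,y)<r$. Since $M$ and $N$ are proper, each set $K_m=\overline{B_M(x,m)}\sqcup\overline{B_N(y,m)}$ is compact. On $K_m\times K_m$ the family $\{d_n\}$ is uniformly bounded (by roughly $2m+r$, via the triangle inequality and $d_n(x,y)<r$) and uniformly equicontinuous in the disjoint-union product topology, since $|d_n(a_1,b_1)-d_n(a_2,b_2)|\le d_n(a_1,a_2)+d_n(b_1,b_2)$ reduces to the fixed metrics $d_M,d_N$ whenever $a_1,a_2$ (respectively $b_1,b_2$) lie in a common summand. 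By Arzel\`a--Ascoli and diagonalization over $m$, a subsequence $d_{n_k}$ will converge uniformly on every $K_m$ to a pseudo-metric $d^*$ on $M\sqcup N$ with $d^*|_{M\times M}=d_M$, $d^*|_{N\times N}=d_N$, and $d^*(x,y)\le r$.

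To bound $H_{d^*}(M,N)\le r$, I would fix $u\in M$ with $d_M(x,u)=a$ and observe that for every $n>a$ there exists $v_n\in N$ with $d_n(u,v_n)<r$; the triangle inequality forces $d_N(y,v_n)<a+2r$, so each $v_n$ lies in the compact set $\overline{B_N(y,a+2r)}$. Extracting a further subsequential limit $v_{n_k}\to v_\infty$ and invoking uniform convergence of $d_{n_k}$ on $K_{\lceil a+2r\rceil}$ gives $d^*(u,v_\infty)=\lim_k d_{n_k}(u,v_{n_k})\le r$, so $d^*(u,N)\le r$; symmetrically $d^*(v,M)\le r$ for $v\in N$. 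Since $d^*$ is only a pseudo-metric, perturbing by adding an arbitrary $\varepsilon>0$ to every cross-distance $d^*(u,v)$ with $u\in M, v\in N$ yields a genuine admissible metric witnessing $d_{GH}(M,x;N,y)<r+\varepsilon$; letting $\varepsilon\to 0$ gives $d_{GH}(M,x;N,y)\le r<s$, so $[N,y]\in B_{GH}(M,x;s)$.

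The main obstacle will be the equicontinuity step: one must confirm that nearby points in $K_m$ always lie in the same summand of $M\sqcup N$, so that the $n$-dependent cross distances $d_n(a,b)$ with $a\in M,b\in N$ never enter the equicontinuity estimate. This is automatic for the disjoint-union topology, but it depends crucially on properness of $M$ and $N$ (to make each $K_m$ genuinely compact) and on the observation that on same-summand pairs the uniform Lipschitz bound reduces to the fixed metrics $d_M$ and $d_N$, independent of $n$.
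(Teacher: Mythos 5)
Your proof is correct and follows essentially the same strategy as the paper: extract a limiting admissible metric from the family of witnesses $d_R$ for $[N,y]\in U_{R,r}(M,x)$, using properness to control the cross-distance witnesses inside compact balls. The only difference is implementation — the paper takes a free-ultrafilter limit of the cross distances shifted by the constant $\frac{s-r}{2}$ to get an admissible metric witnessing the bound $<s$ directly, while you use Arzel\`a--Ascoli with a diagonal argument over an exhaustion by compacta and then pass from the limiting pseudo-metric to an admissible metric by an $\varepsilon$-perturbation and $\varepsilon\to0$; both are sound.
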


\begin{proof}
  The first inclusion is obvious. To verify the second one, let
  \([N,y]\) be any member of \(E_r(M,x)\). For each \(R>0\) there
  exists an admissible metric, \(d_R\), on \(M\sqcup N\) such that
  \(d_R(x,y)<r\) and \(H_{d_R,R}(M,x;N,y)<r\). Let \(\omega\) be a
  free ultrafilter on \([0,\infty)\). Then there is a unique
  admissible metric, \(d\), on \(M\sqcup N\) such that, for all \(u\in
  M\) and \(v\in N\),
  \[
  d(u,v)=\lim_{R\to\omega}d_R(u,v)+\frac{s-r}{2}.
  \]
   For each \(\epsilon>0\) there exists
  \(\Omega\in\omega\) such that,   for all \(R\in\Omega\),
  \[
    d(u,v)<d_R(u,v)+\frac{s-r}{2}+\epsilon,
  \]
 and thus
  \[
    d(x,y)\le d_R(x,y)+\frac{s-r}{2}+\epsilon<\frac{s+r}{2}+\epsilon.
  \]
  Because this holds for each \( \epsilon >0\), 
  \[
    d(x,y)\le\frac{s+r}{2}<s.
  \]

  Next, for every \(u\in M\), if \(R\in\Omega\) is \(>d(x,u)\), then
  \(d_R(u,N)<r\), and so \(d(u,N)<s\) as before. Similarly, \(d(v,M)<s\) for
  all \(v\in N\). Therefore \(H_d(M,N)<s\).
\end{proof}

\begin{cor}\label{c:d_GH}
  The metric relation over \(\MM_*\) defined by the sets \(U_{R,r}\) is
  \(d_{GH}\).
\end{cor}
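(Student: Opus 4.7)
The plan is to derive the corollary as a direct consequence of Lemma~\ref{l:B_{GH}(M,x;r)}, by translating the two-sided inclusions between balls and the relations $E_r$ into inequalities between the two metrics.

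First I would recall the construction from Section~\ref{s:U_R,r}: the metric relation associated with the entourages $U_{R,r}$ is given, for $[M,x],[N,y] \in \MM_*$, by
\[
  d(M,x;N,y) = \inf\{\,r>0 \mid [N,y] \in E_r(M,x)\,\},
\]
where $E_r(M,x) = \bigcap_{R>0} U_{R,r}(M,x)$. The claim is that this $d$ coincides with the pointed Gromov--Hausdorff distance $d_{GH}$ defined by~\eqref{d_GH(M,x;N,y)}.

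Next I would obtain the inequality $d \le d_{GH}$ using the first inclusion of Lemma~\ref{l:B_{GH}(M,x;r)}. For any $r > d_{GH}(M,x;N,y)$ we have $[N,y] \in B_{GH}(M,x;r) \subset E_r(M,x)$, so $d(M,x;N,y) \le r$; letting $r \downarrow d_{GH}(M,x;N,y)$ yields $d(M,x;N,y) \le d_{GH}(M,x;N,y)$. For the reverse inequality, I would use the second inclusion of Lemma~\ref{l:B_{GH}(M,x;r)}: if $[N,y] \in E_r(M,x)$, then for every $s > r$ we have $[N,y] \in B_{GH}(M,x;s)$, hence $d_{GH}(M,x;N,y) \le r$. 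Taking the infimum over all such $r$ gives $d_{GH}(M,x;N,y) \le d(M,x;N,y)$.

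Combining the two inequalities yields $d = d_{GH}$. No step here is a serious obstacle, since Lemma~\ref{l:B_{GH}(M,x;r)} already does the real work; the only thing to be careful about is the handling of the strict versus non-strict inequalities in the definitions of the open balls $B_{GH}(M,x;r)$ and of $d$ as an infimum, which is resolved by the standard passage to the infimum over $r$.
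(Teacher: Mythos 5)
Your proof is correct and is exactly the argument the paper leaves implicit: Corollary~\ref{c:d_GH} is stated without proof immediately after Lemma~\ref{l:B_{GH}(M,x;r)}, precisely because the two sandwich inclusions of that lemma translate, by the infimum manipulations you spell out, into the equality of the two metrics.
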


By Propositions~\ref{p:d is of type I},~\ref{p:d is
  turbulent} and~\ref{p:E_d^X is generically E_S infty Y-ergodic}, and
Corollary~\ref{c:d_GH}, the case of \((d_{GH},E_{GH})\) in
Theorem~\ref{t: d_infty, d_GH and d_QI} follows by showing that the
sets \(U_{R,r}\) also satisfy Hypotheses~\ref{h:type
  I} and~\ref{h:turbulent}. It was already noted that \(\MM_*\) is Polish
(Hypothesis~\ref{h:type I}-(i)).

\begin{lemma}\label{l:U_S,s circ U_R,r subset E_s circ U_R,r}
 If \(R,r,s>0\), then  \(U_{R+2r+s,s}\circ U_{R,r}\subseteq E_s\circ
 U_{R,r}\).
\end{lemma}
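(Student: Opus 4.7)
The plan is to construct, from the hypothesis, a new pointed proper metric space $(N,y)$ that simultaneously sits in $U_{R,r}$ of $(M,x)$ and in $E_s$ of $(P,z)$. First unpack the two $U$-relations as admissible metrics: $d_1$ on $M\sqcup N'$ with $d_1(x,y')<r$ and $H_{d_1,R}(M,x;N',y')<r$, and $d_2$ on $N'\sqcup P$ with $d_2(y',z)<s$ and $H_{d_2,R+2r+s}(N',y';P,z)<s$. By Lemma~\ref{l:d is proper}, $(N'\sqcup P, d_2)$ is itself a proper metric space, which I use as an ambient space inside which to carve out $N$.

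I take $N = B_{N'}(y',\rho)\cup(P\setminus B_P(z,\rho'))$, the closed subset of $(N'\sqcup P,d_2)$ with the inherited metric and basepoint $y=y'$, with parameters chosen so that $R+2r\le\rho\le R+2r+s$ and $\rho'$ is slightly larger than $R+s$. The condition $(M,x)\in U_{R,r}(N,y')$ is checked by extending $d_1$ to an admissible $\tilde d$ on $M\sqcup N$ via infimum over paths through $N'$. Then $\tilde d(x,y')=d_1(x,y')<r$ directly; any $u\in B_M(x,R)$ admits a $d_1$-near $w\in N'$ with $d_{N'}(y',w)<R+2r\le\rho$, hence $w\in N$; and conversely $B_N(y',R)$ is disjoint from the $P$-piece of $N$, because $d_P(z,v)\ge\rho'>R+s$ together with $d_2(y',z)<s$ forces $d_2(y',v)>R$, so Hausdorff closeness follows from the $d_1$-hypothesis.

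The condition $(N,y')\in E_s(P,z)$ is the main content: for each $T>0$ I produce an admissible $d^*$ on $N\sqcup P$ with $d^*(y',z)<s$ and $H_{d^*,T}(N,y';P,z)<s$. I set $d^*$ equal to $d_N$ on $N$, to $d_P$ on $P$, use $d_2$ as the cross-metric for the $B_{N'}(y',\rho)$-piece of $N$, and identify the common part $P\setminus B_P(z,\rho')$ of $N$ and $P$ at a small distance $\delta>0$. Then $d^*(y',z)=d_2(y',z)<s$ is immediate, and the Hausdorff bound decomposes into three cases: (i) for $u$ in the $N'$-piece of $N$, the hypothesis on $d_2$ applied at scale $R+2r+s$ gives $d_2(u,P)<s$, which uses precisely $\rho\le R+2r+s$; (ii) points of the common $B$-piece, whether viewed inside $N$ or $P$, are $\delta$-close to their counterparts; (iii) for $v\in B_P(z,\rho')$, the hypothesis provides an $s$-close $w\in N'$, and a delicate choice of $\rho$ relative to $\rho'$ and $s$ is needed to place $w$ inside $B_{N'}(y',\rho)$, hence in $N$.

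The hardest step is case (iii): coordinating $\rho$ and $\rho'$ so that the approximant in $N'$ delivered by the $d_2$-hypothesis lands in the $N'$-piece of $N$. This triangle-inequality estimate, together with the requirement $\rho\le R+2r+s$ needed for case (i) and the requirement $\rho'>R+s$ needed for condition (a), is what dictates the specific scale $R+2r+s$ appearing in the statement. Once this coordination is arranged, the triangle-inequality checks verifying that $d^*$ is an admissible metric are routine, and putting $(N,y')$ together yields the required factorization in $E_s\circ U_{R,r}$.
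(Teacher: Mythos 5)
Your overall strategy is the paper's: carve a hybrid space out of the glued space $N'\sqcup P$, taking the near part from the intermediate $N'$ and the far part from the endpoint $P$, then verify $U_{R,r}$ through the near ball and $E_s$ through Hausdorff closeness. The genuine gap is exactly the step you single out as hardest, your case (iii): the parameter coordination you defer cannot be arranged. For $v\in P$ with $d_P(z,v)<\rho'$, the only available bound on the approximant $w\in N'$ with $d_2(v,w)<s$ is $d_2(y',w)\le d_2(y',z)+d_P(z,v)+d_2(v,w)<\rho'+2s$, so forcing $w\in B_{N'}(y',\rho)$ requires essentially $\rho\ge\rho'+2s$ (with $s$ replaced by the actual witness $s_0=\max\{d_2(y',z),H_{d_2,R+2r+s}(N',y';P,z)\}<s$, which can be arbitrarily close to $s$). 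But you also need $\rho\le R+2r+s$, since beyond that scale the hypothesis gives no control on $d_2(u,P)$ for $u$ in the $N'$-piece (your case (i)), and you need $\rho'>R+s_0$, since otherwise points of the $P$-piece enter $B_N(y',R)$, where you cannot produce a point of $M$ within $r$ (composing $d_1$ and $d_2$ only yields $r+s$). Together these force roughly $R+3s\le R+2r+s$, i.e. $s\le r$, which is not assumed. Whenever $s>r$ and $s_0$ is close to $s$ (and $P$ is sparse near the sphere of radius $\rho'$, so that $v$ is not rescued by being close to the retained far part), no choice of $\rho,\rho'$ in your scheme works and some $v\in B_P(z,\rho')$ has no point of your $N$ within $s$; so the "coordination dictating the scale $R+2r+s$" does not close, and this is precisely where the content of the lemma lies.

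For comparison, the paper cuts both pieces at the same radius $S=R+2r+s$: its hybrid consists of $\overline{B_P(z,S)}$ (intermediate, in its notation) together with the endpoint minus its open $S$-ball. The $U_{R,r}$-side is then obtained from Lemma~\ref{l:d'}, because the deleted endpoint points lie at distance greater than $S-s=R+2r$ from the base point, so the hybrid's $(R+2r)$-ball is isometric to that of the intermediate; and the $E_s$-side is a single Hausdorff estimate for the restriction of the glued metric at the one scale $S$ furnished by the hypothesis, rather than a requirement that approximants land in a strictly smaller ball of the intermediate, which is what creates your incompatible constraint $\rho\ge\rho'+2s$ versus $\rho-\rho'\lesssim 2r$. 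Two smaller points: properness of $(N'\sqcup P,d_2)$ does not follow from Lemma~\ref{l:d is proper}, whose hypothesis is a global Hausdorff bound rather than the truncated bound $H_{d_2,R+2r+s}<s$ you have; you should instead invoke Lemma~\ref{l:d'} (with the identity on a ball) to assume $d_2$ proper, as the paper does, and you should use the closed ball $\overline{B_{N'}(y',\rho)}$ so that your $N$ is actually a closed, hence proper, subspace.
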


\begin{proof} Let \(S=R+2r+s\).  If \([M,x]\in\MM_*\) and \([N,y]\in
  U_{S,s}\circ U_{R,r}(M,x)\), then there is \([P,z]\in
  U_{R,r}(M,x)\cap U_{S,s}(N,y)\). This means that there are
  admissible metrics, \(d\) on \(M\sqcup P\) and \(\bar d\) on \(N\sqcup P\),
  such that \(d(x,z)<r\), \(H_{d,R}(M,x;P,z)<r\), \(\bar d(y,z)<s\) and
  \(H_{\bar d,S}(N,y;P,z)<s\). Moreover, because of Lemma~\ref{l:d'},
  \(\bar d\) may be assumed to be a proper metric. The subset
  \[
  P'=(N\setminus B_N(y,S))\sqcup\overline{B_P(z,S)}\subseteq N\sqcup P
  \]
  is closed and so it becomes a proper metric space when endowed with
  the metric induced by \(\bar d\).
    
  \begin{claim}\label{cl:d_GH(N,y;P',z)<s}
    The pointed metric space \((P',z)\) satisfies \(d_{GH}(N,y;P',z)<s\).
  \end{claim}
    
  Since \(N\setminus P'\subseteq B_N(y,S)\) and \(P'\setminus
  N=\overline{B_P(z,S)}\), the Hausdorff distance
  \begin{multline*}
    H_{\bar d}(N,P')
    =\max\biggl\{\sup_{v\in B_N(y,S)}\bar d(v,P'),\sup_{w\in B_P(z,S)}\bar d(w,N)\biggr\}\\
    \le H_{\bar d,S}(N,y;P,z)<s,
  \end{multline*}
  and so Claim~\ref{cl:d_GH(N,y;P',z)<s} follows
  from~\eqref{d_GH(M,x;N,y) le ...}.
    
  From Claim~\ref{cl:d_GH(N,y;P',z)<s} and Corollary~\ref{c:d_GH}, it
  follows that \([P',z]\in E_s(N,y)\).
    
  \begin{claim}\label{cl:B_M'(y,R'+2epsilon)=B_N(y,R'+2epsilon)}
    \(B_{P'}(z,R+2r)=B_P(z,R+2r)\).
  \end{claim}
    
  The inclusion ``\(\supseteq\)'' of this identity is obviously true. To
  prove that the reverse inclusion ``\(\subseteq\)'' is also true, it
  suffices to note that \(B_{P'}(z,R+2r)\cap N=\emptyset\), which is
  true because, if there is \(v\in B_{P'}(z,R+2r)\cap N\), then
  \[
  d_N(y,v)\le\bar d(y,z)+\bar d(z,v)<s+R+2r=S,
  \]
  which contradicts that \(B_N(y,S)\cap P'=\emptyset\).
    
  From Claim~\ref{cl:B_M'(y,R'+2epsilon)=B_N(y,R'+2epsilon)} and
  Lemma~\ref{l:d'}, it follows that \([P',z]\in U_{R,r}(M,x)\). Hence
  \([N,y]\in E_s\circ U_{R,r}(M,x)\).
\end{proof}

A subset \(A\) of a metric space \(X\) is called a \textit{net} (as
defined in~\cite[Definition~2.14]{Gromov1999}) if there
is \(\epsilon>0\) such that \(d_X(u,A)\le\epsilon\) for all \(u\in X\), and
it is called \textit{separated} if there is \(\delta>0\) such that
\(d_X(a,b)\ge\delta\) for all \(a, b \in A\) with \(a\neq b\); the terms
\(\epsilon\)-\textit{net} and \(\delta\)-\textit{separated} are also used
in these cases. 

A separated subset of a metric space is discrete and therefore
closed. Hence, every separated subset of a proper metric space 
is a proper metric space when endowed with the induced metric.

If \(A\subseteq X\) is an \(\epsilon\)-net of a metric space,
\((X,d_X)\), then \(H_{d_X}(X,A)\le\epsilon\). So, if \(A\) is endowed
with the induced metric from \((X,d_X)\), then
\(d_{GH}(X,x;A,x)\le\epsilon\) for every \(x\in A\)
by~\eqref{d_GH(M,x;N,y) le ...}. Therefore, if in addition \(X\) is
proper and \(A\) is separated, then, for any \(\delta>\epsilon\), 
\([A,x]\in E_\delta(X,x)\) by Lemma~\ref{l:B_{GH}(M,x;r)}.

\begin{lemma}\label{l:separated nets}
  Let \(\epsilon>0\). For each metric space \(M\) and each
  \(\epsilon\)-separated subset \(S\subseteq M\), there exists an
  \(\epsilon\)-separated \(\epsilon\)-net of \(M\) that contains \(S\).
\end{lemma}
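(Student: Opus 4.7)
The plan is to apply Zorn's lemma to the poset of $\epsilon$-separated subsets of $M$ that contain $S$, ordered by inclusion. First I would verify that this poset is non-empty (since $S$ itself is $\epsilon$-separated) and that every chain has an upper bound: given a totally ordered family $\{A_i\}_{i\in I}$ of $\epsilon$-separated sets containing $S$, the union $A=\bigcup_i A_i$ contains $S$, and for any two distinct points $a,b\in A$ there is some single $A_i$ containing both, so $d(a,b)\ge\epsilon$, whence $A$ is $\epsilon$-separated. Zorn then produces a maximal element $A$.

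The second step is to show that maximality forces $A$ to be an $\epsilon$-net. For any $x\in M\setminus A$, the set $A\cup\{x\}$ strictly extends $A$ and contains $S$, so by maximality it cannot be $\epsilon$-separated. Since $A$ itself is $\epsilon$-separated, the failure must come from the new point: there exists $a\in A$ with $d(x,a)<\epsilon$. Hence $d(x,A)\le\epsilon$ for every $x\in M\setminus A$, and trivially $d(x,A)=0$ for $x\in A$; thus $A$ is an $\epsilon$-net containing $S$.

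There is essentially no obstacle here beyond care with the strict/non-strict inequalities in the definitions of $\epsilon$-separated ($d(a,b)\ge\epsilon$) and $\epsilon$-net ($d(u,A)\le\epsilon$); the Zorn step closes unions of chains under $\epsilon$-separation precisely because that condition is witnessed on pairs and the chain is totally ordered. If one preferred to avoid the axiom of choice in the separable case that is actually needed in the paper, one could instead enumerate a countable dense subset of $M$ and greedily adjoin points at distance $\ge\epsilon$ from the current set, starting from $S$; the same argument shows the resulting set is an $\epsilon$-separated $\epsilon$-net containing $S$.
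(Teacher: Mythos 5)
Your proposal is correct and follows exactly the paper's argument: apply Zorn's lemma to the family of $\epsilon$-separated subsets containing $S$ ordered by inclusion, and check that a maximal element must be an $\epsilon$-net. The paper leaves the chain-closure and maximality details as "easily checked," and your write-up simply supplies them.
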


\begin{proof}
  By Zorn's lemma, the set of \(\epsilon\)-separated subsets of \(M\)
  that contain \(S\), ordered by inclusion, has a maximal element. It is
  easily checked that that maximal element is an \(\epsilon\)-net.
\end{proof}

The following is a ``converse'' to Lemma~\ref{l:U_S,r circ U_S,s
  subset U_R,r+s}.

\begin{lemma}\label{l:U_R,r+s subset U_R,s circ U_R,r}
If \(R, r, s > 0 \), then  \(U_{R,r+s}\subseteq U_{R,s}\circ U_{R,r}\).
\end{lemma}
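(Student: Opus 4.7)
The plan is to produce, from the hypothesis $([M,x],[N,y]) \in U_{R,r+s}$, an intermediate pointed proper metric space $(P,z)$ satisfying $([M,x],[P,z]) \in U_{R,r}$ and $([P,z],[N,y]) \in U_{R,s}$. First I will unpack the hypothesis by fixing an admissible metric $d$ on $M\sqcup N$ with $d(x,y) < r+s$ and $H_{d,R}(M,x;N,y) < r+s$, and choosing $\epsilon > 0$ small enough that both bounds remain strict with $r+s$ replaced by $r+s-\epsilon$. Set $t = r/(r+s) \in (0,1)$.

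The construction of $P$ is by a convex-combination recipe. Let $\rho = \{(u,v)\in M\times N : d(u,v) \le r+s-\epsilon\}$, a closed subset of $M\times N$. Introduce an abstract point $p_{u,v}$ for each $(u,v)\in\rho$, set $P = \{p_{u,v} : (u,v)\in\rho\}$, and equip $P$ with the metric
\[
  d_P(p_{u_1,v_1}, p_{u_2,v_2}) = (1-t)\, d_M(u_1,u_2) + t\, d_N(v_1,v_2),
\]
pointed at $z = p_{x,y}$. Then $(P,z)$ is pointed proper, because closed balls around $z$ in $P$ correspond to closed subsets of the compact product $\overline{B_M(x,S/(1-t))}\times\overline{B_N(y,S/t)}$ intersected with $\rho$.

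Next I will build a single ambient metric witnessing both closeness properties. Extend $d$ to a metric $d_X$ on $X = M\sqcup N\sqcup P$ by defining
\[
  d_X(p_{u,v},w) = (1-t)\, d(u,w) + t\, d(v,w) \qquad \text{for } w \in M\sqcup N,
\]
with $d_X$ agreeing with $d$ on $M\sqcup N$ and with $d_P$ on $P$. A routine case analysis of the triangle inequality, whose cross-cases reduce via the convexity identity $(1-t)+t = 1$ to the triangle inequality of $d$, shows that $d_X$ is genuinely a metric. Then Lemma~\ref{l:U_R,r} applied with this ambient $X$ gives both memberships, the required estimates being: $d_X(x,z) = t\,d(x,y) < r$ and $d_X(y,z) = (1-t)\,d(x,y) < s$; for $u\in B_M(x,R)$ the Hausdorff hypothesis supplies $v\in N$ with $d(u,v) < r+s-\epsilon$, whence $p_{u,v}\in P$ and $d_X(u,p_{u,v}) = t\,d(u,v) < r$; for $p_{u',v'}\in B_P(z,R)$ one has $d_X(p_{u',v'},u') = t\,d(u',v') \le t(r+s-\epsilon) < r$ and $d_X(p_{u',v'},v') = (1-t)\,d(u',v') < s$; and the estimate from $B_N(y,R)$ into $P$ is symmetric.

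The main obstacle will be the systematic verification that $d_X$ truly defines a metric: all cross-cases of the triangle inequality (two $p$-points and a point of $M\sqcup N$, one $p$-point and two points of $M\sqcup N$, etc.) must be checked, and while each case reduces via convex combinations to the triangle inequality of $d$, the bookkeeping is delicate. A secondary technical point is the initial choice of the slack $\epsilon$, which lets $\rho$ be closed (securing properness) while still converting $d(u,v) \le r+s-\epsilon$ into the strict inequalities $t\,d(u,v) < r$ and $(1-t)\,d(u,v) < s$ needed for $U_{R,r}$ and $U_{R,s}$ membership.
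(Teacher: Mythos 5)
Your proposal is correct, but it follows a genuinely different route from the paper's. The paper first replaces $d$ by a proper admissible metric (its Lemma~\ref{l:d'}), chooses finite $\epsilon$-separated $\epsilon$-nets $A\subset B_M(x,R)$ and $B\subset B_N(y,R)$ (Lemma~\ref{l:separated nets}), joins the finitely many close pairs $(u,v)\in A\times B$ by Euclidean segments $I_{u,v}$ of length $d(u,v)$, and takes as intermediate space the finite set of points sitting at the proportional position $\tfrac{r_0+\epsilon}{r_0+s_0+2\epsilon}$ along each segment; properness of the witness is then trivial, and membership in $U_{R,r}$ and $U_{R,s}$ is checked via Lemma~\ref{l:U_R,r}. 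You instead take as witness the full ``coupling'' set $P\cong\rho=\{(u,v)\in M\times N\mid d(u,v)\le r+s-\epsilon\}$ with the interpolated metric $(1-t)d_M+t\,d_N$, $t=r/(r+s)$, and extend by the convex-combination formula $d_X(p_{u,v},w)=(1-t)d(u,w)+t\,d(v,w)$; the triangle inequality does reduce case by case to that of $d$, and your properness argument is sound since $\rho$ is closed in $M\times N$ and closed balls of $P$ about $z$ sit inside $\overline{B_M(x,S/(1-t))}\times\overline{B_N(y,S/t)}\cap\rho$, a compact set. The estimates $d_X(x,z)=t\,d(x,y)<r$, $d_X(y,z)=(1-t)d(x,y)<s$, and $H_{d_X,R}(M,x;P,z)\le t(r+s-\epsilon)<r$, $H_{d_X,R}(N,y;P,z)\le(1-t)(r+s-\epsilon)<s$ are all uniform, so Lemma~\ref{l:U_R,r} applies and gives $[P,z]\in U_{R,r}(M,x)\cap U_{R,s}(N,y)$ as required (symmetry of the $U_{R,r}$ makes the composition order harmless). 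What your approach buys is economy: no discretization, no finiteness bookkeeping, and no appeal to Lemma~\ref{l:d'} to make $d$ proper, since properness of $P$ comes directly from properness of $M$ and $N$. What the paper's approach buys is a very small (finite) explicit witness built by the segment-gluing device that it reuses later (e.g.\ in Lemma~\ref{l:EE}), which is why it is organized that way there.
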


\begin{proof}
  Let \([M,x]\in\MM_*\) and \([N,y]\in U_{R,r+s}(M,x)\). Then there is
  an admissible metric, \(d\), on \(M\sqcup N\) such that
  \(d(x,y)<r_0+s_0\) and \(H_{d,R}(M,x;N,y)<r_0+s_0\) for some
  \(r_0\in(0,r)\) and \(s_0\in(0,s)\). By Lemma~\ref{l:d'}, \(d\) may
  be assumed to be a proper metric.
   
   Let \(\epsilon>0\) be such that \(r_0+2\epsilon<r\) and
   \(s_0+2\epsilon<s\). By Lemma~\ref{l:separated nets}, there are
   \(\epsilon\)-separated \(\epsilon\)-nets, \(A\) of \(B_M(x,R)\) and \(B\) of
   \(B_N(y,R)\), such that \(x\in A\) and \(y\in B\).
   
   For each \(u\in B_M(x,R)\), there is \(v\in N\) such that
   \(d(u,v)<r_0+s_0\). Then there is \(v'\in B\) so that
   \(d_N(v,v')\le\epsilon\). So
     \[
       d(u,v')\le d(u,v)+d_N(v,v')<r_0+s_0+\epsilon,
     \]
     which implies \(d(u,B)<r_0+s_0+\epsilon\). Similarly, for all
       \(v\in B_N(y,R)\), \(d(v,A)<r_0+s_0+\epsilon\).
   
   Let \(\Sigma\) denote the set of pairs \((u,v)\in A\times B\) such that
   \(d(u,v)<r_0+s_0+\epsilon\) and \(\min\{d_M(x,u),d_N(y,v)\}<R\); in
   particular, \((x,y)\in\Sigma\). The set \(\Sigma\) is finite because
   \(A\) and \(B\) are separated and \(d\) is proper. For each
   \((u,v)\in\Sigma\), let \(I_{u,v}\) be the interval
   \([0,d(u,v)]\subseteq \R\) of
   length \(d(u,v)\), and let \(d_{u,v}\) be its standard metric. Let
   \(h:\bigsqcup_{(u,v)\in\Sigma}\partial I_{u,v}\to M\sqcup N\) be a
   map that restricts to a bijection \(h:\partial I_{u,v}\to\{u,v\}\)
   for all \((u,v)\in\Sigma\). Then let
    \[
      \widehat{P}=(M\sqcup N)\cup_h\bigsqcup_{(u,v)\in\Sigma}I_{u,v}.
    \]
  The spaces \(M\), \(N\) and each \(I_{u,v}\) may be viewed as subspaces of
  \(\widehat{P}\); in particular, \(\partial I_{u,v}\equiv\{u,v\}\) in
  \(\widehat{P}\).  Let \(\widehat{P}\) be endowed with the metric \(\hat
  d\) whose restriction to \(M\sqcup N\) is \(d\), whose restriction to
  each \(I_{u,v}\) is \(d_{u,v}\), and such that, for all \((u,v),(u',v')\in\Sigma\), \(w\in I_{u,v}\) and \(w'\in I_{u',v'}\),
    \begin{multline*}
      \hat d(w,w')=\min\bigl\{d_{u,v}(w,u)+d_M(u,u')+d_{u',v'}(u',w'),\\
      d_{u,v}(w,v)+d_N(v,v')+d_{u',v'}(v',w')\bigr\}.
    \end{multline*}
  
  Let \(P\subseteq\widehat{P}\) be the finite subset consisting of the
  points \(w\in I_{u,v}\) with \((u,v)\in\Sigma\) and
    \[
    d_{u,v}(w,u)=\frac{r_0+\epsilon}{r_0+s_0+2\epsilon}\,d(u,v).
    \]
    Let \(z\) be the unique point in \(P\cap I_{x,y}\), and consider
    the restriction of \(\hat d\) to \(P\).
  
    If \((u,v)\in\Sigma\) and \(w\) is the unique point in \(P\cap
    I_{u,v}\), then
    \[
      \hat d(u,w)\le d_{u,v}(u,w)
      <\frac{r_0+\epsilon}{r_0+s_0+2\epsilon}\,d(u,v)<r_0+\epsilon.
    \]
    So \(\hat d(x,z)<r_0+\epsilon<r\), and, for all \(u\in A\) and
    \(w\in P\), \(\hat d(u,P)<r_0+\epsilon\) and \(\hat
  d(w,M)<r_0+\epsilon\). Since \(A\) is an \(\epsilon\)-net in
  \(B_M(x,R)\), it also follows that, for all \(u\in B_M(x,R)\),
  \(\hat d(u,P)<r_0+2\epsilon\). Similarly, \(\hat d(y,z)<s\),
  and, for all \(v\in B_N(y,R)\) and \(w\in P\), \(\hat
  d(v,P)<s_0+2\epsilon\) and \(\hat d(w,N)<s_0+\epsilon\). Thus
    \[
      H_{\hat d,R}(M,x;P,z)\le r_0+2\epsilon<r,\quad H_{\hat
        d,R}(N,y;P,z)\le s_0+2\epsilon<s,
    \]
    and so \([P,z]\in U_{R,r}(M,x)\cap U_{R,s}(N,y)\) by
    Lemma~\ref{l:U_R,r}. Therefore \([N,z]\in U_{R,s}\circ
    U_{R,r}(M,x)\).
\end{proof}

Hypothesis~\ref{h:type I}-(ii) results from the next corollary.

\begin{cor}\label{c:E_r is cont}
  For \(R,r,s>0\), \(U_{T,r}\circ E_s\subseteq E_s\circ U_{R,r}\), where
  	\[
		T=R+2r+s+2\max\{r,s\}.
	\]
\end{cor}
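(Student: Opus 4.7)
The plan is to prove the inclusion by chaining together the three preceding lemmas, using $E_s \subset U_{T,s}$ to convert the $E_s$ factor on the left into a $U_{T,s}$ factor, then reshape the resulting product via the merging lemma (Lemma~\ref{l:U_S,r circ U_S,s subset U_R,r+s}) and the splitting lemma (Lemma~\ref{l:U_R,r+s subset U_R,s circ U_R,r}), and finally collapse back to $E_s$ on the right using Lemma~\ref{l:U_S,s circ U_R,r subset E_s circ U_R,r}.

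Concretely, I would proceed in five steps. First, since $E_s = \bigcap_{R'>0} U_{R',s}$, we have $E_s \subset U_{T,s}$, so
\[
U_{T,r}\circ E_s \subset U_{T,r}\circ U_{T,s}.
\]
Second, set $R' = R+2r+s$, noting that $T = R' + 2\max\{r,s\}$, so Lemma~\ref{l:U_S,r circ U_S,s subset U_R,r+s} gives
\[
U_{T,r}\circ U_{T,s}\subset U_{R+2r+s,\,r+s}.
\]
Third, apply Lemma~\ref{l:U_R,r+s subset U_R,s circ U_R,r} with the radius $R+2r+s$ to split:
\[
U_{R+2r+s,\,r+s}\subset U_{R+2r+s,\,s}\circ U_{R+2r+s,\,r}.
\]

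Fourth, since $R \le R+2r+s$, Hypothesis~\ref{h:U_R,r}-(iii) gives $U_{R+2r+s,\,r}\subset U_{R,r}$, hence
\[
U_{R+2r+s,\,s}\circ U_{R+2r+s,\,r}\subset U_{R+2r+s,\,s}\circ U_{R,r}.
\]
Fifth, Lemma~\ref{l:U_S,s circ U_R,r subset E_s circ U_R,r} applied directly with the triple $(R,r,s)$ yields
\[
U_{R+2r+s,\,s}\circ U_{R,r}\subset E_s\circ U_{R,r},
\]
and composing these five inclusions gives the desired conclusion.

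There is no real obstacle: the entire argument is bookkeeping of radii, and the value $T = R + 2r + s + 2\max\{r,s\}$ is precisely what is needed so that after absorbing $E_s$ into $U_{T,s}$ and merging via Lemma~\ref{l:U_S,r circ U_S,s subset U_R,r+s}, the resulting small radius $R+2r+s$ matches the hypothesis of Lemma~\ref{l:U_S,s circ U_R,r subset E_s circ U_R,r}. The only point that requires mild care is verifying that the parameters line up correctly at each step, which is routine.
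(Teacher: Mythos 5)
Your proof is correct and follows essentially the same route as the paper: absorb $E_s$ into $U_{T,s}$, merge via Lemma~\ref{l:U_S,r circ U_S,s subset U_R,r+s}, split via Lemma~\ref{l:U_R,r+s subset U_R,s circ U_R,r}, and finish with Lemma~\ref{l:U_S,s circ U_R,r subset E_s circ U_R,r}; the only difference is that you spell out the monotonicity step $U_{R+2r+s,r}\subset U_{R,r}$ which the paper leaves implicit. The parameter bookkeeping checks out.
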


\begin{proof}
  Let \(S=R+2r+s\). By Lemmas~\ref{l:U_S,r circ U_S,s subset
    U_R,r+s},~\ref{l:U_S,s circ U_R,r subset E_s circ U_R,r}
  and~\ref{l:U_R,r+s subset U_R,s circ U_R,r},
    \[
      U_{T,r}\circ E_s\subseteq U_{T,r}\circ U_{T,s}\subseteq U_{S,r+s}\subseteq U_{S,s}\circ U_{R,r}\subseteq E_s\circ U_{R,r}.\qed
    \]
\renewcommand{\qed}{}
\end{proof}

Hypothesis~\ref{h:type I}-(iii) is the statement of the next lemma.

\begin{lemma}\label{l:E_r cap(E_s(M,x) times E_t(M,x))}
  Let \([M,x]\in \MM_*\), \(s>0\) and \([N,y]\in E_s(M,x)\). If
  \(r>0\) and \(V\) is a neighborhood of \([N,y]\) in \(\MM_*\), then
  there is  a
  neighborhood \(W\) of \([N,y]\) in \(\MM_*\) such that
        \[
          E_r(W)\cap E_r(E_s(M,x))\subseteq E_r(V\cap E_s(M,x)).
        \]
\end{lemma}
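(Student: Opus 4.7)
The plan is to reduce to a basic neighborhood $V=U_{T,t}(N,y)$ of $[N,y]$ and to choose $W=U_{T^*,t^*}(N,y)$ with $T^*$ large and $t^*$ small (with the precise dependence to be determined by the estimates). Given $[P,z]\in E_r(W)\cap E_r(E_s(M,x))$, fix witnesses $[N',y']\in W$ and $[M',x']\in E_s(M,x)$ with $[P,z]\in E_r(N',y')\cap E_r(M',x')$. The task is to construct $[N'',y'']\in V\cap E_s(M,x)$ such that $[P,z]\in E_r(N'',y'')$.

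I would build $[N'',y'']$ by a three-piece patching in the spirit of the proof of Lemma~\ref{l:U_R,r+s subset U_R,s circ U_R,r}. Since both $[N,y]$ and $[M',x']$ belong to $E_s(M,x)$, composition through $[M,x]$ yields, for any prescribed large scale, an admissible metric $d^\dagger$ on $N\sqcup M'$ with $d^\dagger(y,x')<2s$ and $H_{d^\dagger,R}(N,y;M',x')<2s$; by Lemma~\ref{l:d'} we may take $d^\dagger$ proper. Choose radii with $T_2>T+2t+2s$ and $T_1>T_2+4\max(r,s)$, and define
\[
N''=\overline{B_N(y,T_1)}\cup\bigl(M'\setminus B_{M'}(x',T_2)\bigr)\subset N\sqcup M',
\]
with the metric inherited from $d^\dagger$ and basepoint $y''=y$; properness of $N''$ follows as in the proof of Lemma~\ref{l:U_S,s circ U_R,r subset E_s circ U_R,r}.

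Property $[N'',y]\in V$ follows directly from Lemma~\ref{l:d'}, since the condition $T_2>T+2t+2s$ guarantees $(B_{N''}(y,T+2t),y)=(B_N(y,T+2t),y)$. For $[N'',y]\in E_s(M,x)$ I would combine, by inf-convolution, the admissible metric on $M\sqcup N$ witnessing $[N,y]\in E_s(M,x)$ with $d^\dagger$ and an admissible metric on $M\sqcup M'$ witnessing $[M',x']\in E_s(M,x)$ into a single admissible metric on $M\sqcup N\sqcup M'$, then restrict it to $M\sqcup N''$; the choice $T_1>T_2+4\max(r,s)$ ensures that at every scale $R$ one of the two routes (the $N$-route for $R\le T_1-2s$, the $M'$-route for $R\ge T_2+2s$) provides the required $s$-bound in the Hausdorff quantity. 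For $[P,z]\in E_r(N'',y)$, an analogous construction combines the admissible metric witnessing $[P,z]\in E_r(M',x')$ with the one obtained from the composition of $[P,z]\in E_r(N',y')$ and $[N',y']\in U_{T^*,t^*}(N,y)$ via Lemma~\ref{l:U_S,r circ U_S,s subset U_R,r+s}; the two routes are used on the $M'$-part and the $N$-part of $N''$, respectively.

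The principal obstacle is the verification of $[P,z]\in E_r(N'',y)$: the composition on the $N$-route yields the bound $r+t^*$ rather than $r$. The key observation is that each strict inequality in the definitions of $[P,z]\in E_r(N',y')$ and $[N',y']\in U_{T^*,t^*}(N,y)$ provides a positive slack, and by taking $t^*$ sufficiently small compared to these slacks on the compact ball $\overline{B_N(y,T_1)}$, the composed admissible metric on the $N$-route can be chosen strictly below $r$. A similar but milder delicacy arises in $[N'',y]\in E_s(M,x)$, where the $2s$-bound from $d^\dagger$ must be avoided in favour of the direct $s$-bound from $M\sqcup M'$; this is what dictates the gap $T_1-T_2\ge 4\max(r,s)$. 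Tuning $T^*,t^*,T_1,T_2$ in this order lets all three estimates close simultaneously.
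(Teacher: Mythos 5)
The gap is exactly at the point you flag, and it cannot be closed in the way you suggest: the quantifier order is against you. The neighborhood $W=U_{T^*,t^*}(N,y)$ must be fixed before $[P,z]$ and the witnesses $[N',y']\in W$, $[M',x']\in E_s(M,x)$ appear, whereas the ``slack'' in the strict inequalities witnessing $[P,z]\in E_r(N',y')$ depends on those data and can be arbitrarily small compared with any previously chosen $t^*$. Moreover, your $N''$ agrees with $N$ (not with $N'$) on the ball of radius $T_1$ about the basepoint, and its remaining points lie at $N''$-distance greater than $T_2-2s>T+2t$ from $y$; hence $[P,z]\in E_r(N'',y)$ would force $[P,z]$ to be $U_{R,r}$-close to $(N,y)$ itself at moderate scales, while elements of $E_r(W)$ are in general only $(r+t^*)$-close to $(N,y)$, and this loss is real. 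Concretely: take $r=1$, $s=2$, $M=N=\{y,b\}$ with $d_N(y,b)=10$, $x=y$, and $V=U_{T,t}(N,y)$ with $T$ large. Given any $t^*>0$, put $\delta=\tfrac12\min\{t^*,1\}$, let $N'=\{y',b'\}$ with $d_{N'}(y',b')=10+2\delta$ and $P=\{z,c\}$ with $d_P(z,c)=12+\delta$. Then $[N',y']\in U_{T^*,t^*}(N,y)$, $[P,z]\in E_1(N',y')$ (match $z\leftrightarrow y'$ and $c\leftrightarrow b'$ at cross distance $1-\delta/4$), and $[P,z]\in E_2(N,y)=E_s(M,x)$, so $[P,z]\in E_r(W)\cap E_r(E_s(M,x))$ (take $[M',x']=[P,z]$ as second witness). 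But in any admissible metric on $P\sqcup N''$ with $d(z,y)<1$, the point $c$ can be within $1$ neither of $y$ nor of $b$ (else $d_P(z,c)<2$, respectively $d_P(z,c)<12$) nor of the outer piece (which here is even empty, since $M'$ is bounded and $T_2>T+2t+2s$). So $[P,z]\notin E_r(N'',y)$, however $T^*,t^*,T_1,T_2$ were tuned; indeed no space whose large ball about the basepoint is isometric to that of $(N,y)$ can be $E_r$-close to this $[P,z]$, and your $N''$ is such a space by construction.

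The paper's proof differs precisely at this point, and its two devices are what you need. First, $W$ is never used to compare $P$ with $N$: it is chosen by Lemma~\ref{l:d'} so that any pointed space whose ball of radius $S$ about the basepoint is isometric to the corresponding ball of some member of $W$ automatically lies in $V$ (and so that $W\subset U_{T,s}(M,x)$ for $T=S+s+r$). The inner piece of the glued space is then $\overline{B_{N_1}(y_1,T)}$ taken from the witness $[N_1,y_1]\in W$ itself, so the strict bounds $<s$ towards $M$ and $<r$ towards $P$ are inherited directly from $[N_1,y_1]\in U_{T,s}(M,x)$ and $[P,z]\in E_r(N_1,y_1)$ with no $t^*$-loss, while membership in $V$ follows from $B_{N'}(y_1,S)=B_{N_1}(y_1,S)$. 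Second, the outer part is not a single tail of $M'$: the paper glues countably many copies of the second witness $[N_2,y_2]\in E_s(M,x)$, one annulus $\overline{B_{N_{2,n}}(y_{2,n},T_n)}\setminus B_{N_{2,n}}(y_{2,n},T_{n-1})$ for each scale $T_n\uparrow\infty$, each carrying admissible metrics with bounds $<s$ towards $M$ and $<r$ towards $P$ valid up to radius $T_n$; this keeps every truncated Hausdorff quantity strictly below $s$ and $r$ at all radii, whereas a single outer piece only gives simultaneous control at all scales with constants $s'>s$, $r'>r$, since $E_s$ and $E_r$ do not provide one admissible metric with a global strict bound. If you replace your inner ball of $N$ by a ball of $N'$, choose $W$ via Lemma~\ref{l:d'}, and build the outer part annulus by annulus, your patching becomes essentially the paper's proof.
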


\begin{proof}
  By Lemma~\ref{l:d'}, there are \(S>0\) and an open neighborhood
  \(W\) of \([N,y]\) in \(\MM_*\) such that, for all \([N',y']\in\MM_*\) and
  \([N'',y'']\in W\), if \((B_{N'}(y',S),y')\) is isometric to
  \((B_{N''}(y'',S),y'')\), then \([N',y']\in V\). Since \([N,y]\in
  U_{T,s}(M,x)\) for \(T=S+s+r\), we  also assume that \(W\subseteq
  U_{T,s}(M,x)\).
  
  For each \([P,z]\in E_r(W)\cap E_r(E_s(M,x))\), there are
  \([N_1,y_1]\in W\) and \([N_2,y_2]\in E_s(M,x)\) such that
  \([P,z]\in E_r(N_1,y_1)\cap E_r(N_2,y_2)\), and admissible, proper
  (by Lemma~\ref{l:d'}) metrics, \(d_1\) on \(M\sqcup N_1\) and \(\bar
  d_1\) on \(N_1\sqcup P\), so that \(d_1(x,y_1)<s\),
  \(H_{d_1,T}(M,x;N_1,y_1)<s\), \(\bar d_1(y_1,z)<r\) and \(H_{\bar
    d_1,T}(N_1,y_1;P,z)<r\). Let \((T_n)\) be a sequence in \(\R\)
  with \(T_n\uparrow\infty\) and \(T_0>T\); set also \(T_{-1}=T\). For
  each \(n\in\N\), let \((N_{2,n},y_{2,n})\) be an isometric copy of
  \((N_2,y_2)\). Then there are admissible, proper (by
  Lemma~\ref{l:d'}) metrics, \(d_{2,n}\) on \(M\sqcup N_{2,n}\) and
  \(\bar d_{2,n}\) on \(N_{2,n}\sqcup P\), such that
  \(d_{2,n}(x,y_{2,n})<s\),
  \(H_{d_{2,n},T_n}(M,x;N_{2,n},y_{2,n})<s\), \(\bar
  d_{2,n}(y_{2,n},z)<r\) and \(H_{\bar
    d_{2,n},T_n}(N_{2,n},y_{2,n};P,z)<r\).
  
  Let \(d\) denote the metric on \(M\sqcup
  N_1\sqcup(\bigsqcup_{n=0}^\infty N_{2,n})\sqcup P\) which extends
  \(d_1\), \(\bar d_1\), \(d_{2,n}\) and \(\bar d_{2,n}\) for all \(n\in\N\),
  and such that, for \(u\in M\) and \(w\in P\),
    \begin{multline*}
      d(u,w)=\inf\{\,d_1(u,v_1)+\bar d_1(v_1,w),\\
      d_{2,n}(u,v_{2,n})+\bar d_{2,n}(v_{2,n},w)\mid v_1\in N_1,\ v_{2,n}\in N_{2,n},\ n\in\N\,\bigr\},
    \end{multline*}
  for \(v_1\in N_1\) and \(v_{2,n}\in N_{2,n}\),
    \begin{multline*}
      d(v_1,v_{2,n})=\inf\bigl\{\,d_1(v_1,u)+d_{2,n}(u,v_{2,n}),\\ \bar
      d_1(v_1,w)+\bar d_{2,n}(w,v_{2,n})\mid u\in M,\ w\in P\,\bigr\},
    \end{multline*}
  and, for \(v_{2,m}\in N_{2,m}\) and \(v_{2,n}\in N_{2,n}\) with \(m\neq n\),
    \begin{multline*}
      d(v_{2,m},v_{2,n})=\inf\bigl\{\,d_{2,m}(v_{2,m},u)+d_{2,n}(u,v_{2,n}),\\ \bar
      d_{2,m}(v_{2,m},w)+\bar d_{2,n}(w,v_{2,n})\mid u\in M,\ w\in P\,\bigr\}.
    \end{multline*}
  Since the metrics \(d_1\), \(\bar d_1\), and
  \(d_{2,n}\) and \(\bar d_{2,n}\) for all \(n\in\N\), are proper,
  the metric \(d\) is proper as well. The set
    \[
      N'=\overline{B_{N_1}(y_1,T)}\sqcup\left(\bigsqcup_{n=0}^\infty\left(\overline{B_{N_{2,n}}(y_{2,n},T_n)}\setminus
      B_{N_{2,n}}(y_{2,n},T_{n-1})\right)\right)
    \]
  is closed in \(M\sqcup N_1\sqcup(\bigsqcup_{n=0}^\infty
  N_{2,n})\sqcup P\), and therefore it becomes a proper metric space
  with the restriction of \(d\).
  
  Then \(H_d(M,x;N',y_1)<s\) and \(H_d(N',y_1;P,z)<r\), as in
  Claim~\ref{cl:d_GH(N,y;P',z)<s}, and so \(d_{GH}(M,x;N',y_1)<s\) and
  \(d_{GH}(N',y_1;P,z)<r\) by~\eqref{d_GH(M,x;N,y) le ...}, which in
  turn implies \([N',y_1]\in E_s(M,x)\cap E_r(P,z)\) by
  Lemma~\ref{l:B_{GH}(M,x;r)}. On the other hand, like in
  Claim~\ref{cl:B_M'(y,R'+2epsilon)=B_N(y,R'+2epsilon)}, it follows
  that \(B_{N'}(y_1,S)=B_{N_1}(y_1,S)\) and so \([N',y_1]\in V\)
  because \([N_1,y_1]\in W\). Therefore \([P,z]\in E_r(V\cap
  E_s(M,x))\).
\end{proof}

Hypothesis~\ref{h:turbulent}-(i) is plainly true: the relation
\(E_{GH}\) has more than one equivalence class because the GH
distance between a bounded metric space and an unbounded one is always
infinite.

Hypothesis~\ref{h:turbulent}-(ii) is a consequence of the next lemma.

\begin{lemma}\label{l:E_GH is minimal}
  If \([M,x], [N,y]\in\MM_*\) and \(R, r> 0\), then there is \(s>0\)
  such that \(U_{R,r}(M,x)\cap E_s(N,y)\neq\emptyset\).
\end{lemma}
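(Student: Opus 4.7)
The plan is to construct a pointed proper metric space $[P,z]$ that agrees with $(M,x)$ exactly on a ball of radius $>R$ around the basepoint (so $[P,z]\in U_{R,r}(M,x)$ will be immediate) while simultaneously being at finite GH distance from $(N,y)$ (so $[P,z]\in E_s(N,y)$ for suitable $s$, via Lemma~\ref{l:B_{GH}(M,x;r)}).

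Construction: fix any $T>R$, and set $K=\overline{B_M(x,T)}$, which is compact because $M$ is proper. Let $P=K\sqcup N$ as a set, and define $d_P$ by extending $d_M|_K$ and $d_N$ through a bridge of length $1$ between $x$ and $y$, namely
\[
d_P(u,v)=d_M(u,x)+1+d_N(y,v)\qquad(u\in K,\ v\in N).
\]
A direct check shows $d_P$ is a metric and that $(P,d_P)$ is proper (closed balls decompose as a closed ball in $K$ together with a closed ball in $N$, both precompact). Set $z=x\in K\subset P$; then $B_P(z,R)=B_M(x,R)$ since $T>R$.

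To verify both conditions simultaneously, construct a single ambient metric space $X$ into which $M$, $N$ and $P$ all isometrically embed. Take $X=M\sqcup N$ with $d_X|_M=d_M$, $d_X|_N=d_N$, and the same bridge formula $d_X(u,v)=d_M(u,x)+1+d_N(y,v)$ for $u\in M$, $v\in N$; the triangle inequality is routine. The natural inclusions $f:M\hookrightarrow X$ and $\iota:N\hookrightarrow X$ are isometric, and the map $g:P\to X$ defined by $g(u)=u\in M$ for $u\in K$ and $g(v)=v\in N$ for $v\in N$ is an isometric injection, precisely because $d_P$ and $d_X$ use the same length-$1$ bridge through $x$ and $y$; this matching is the only point of the argument that needs to be noted carefully.

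Conclusion: since $B_M(x,R)=B_P(z,R)$ is contained in both $f(M)$ and $g(P)$ inside $X$, we have $d_X(f(x),g(z))=0<r$ and $H_{d_X,R}(f(M),f(x);g(P),g(z))=0<r$, so $[P,z]\in U_{R,r}(M,x)$ by Lemma~\ref{l:U_R,r}. On the other hand, using the inclusions $g:P\hookrightarrow X$ and $\iota:N\hookrightarrow X$, every point of $g(P)\setminus N$ lies in $K$ and hence within distance $T+1$ of $\iota(y)\in\iota(N)$, while $\iota(N)\subset g(P)$; thus $H_{d_X}(g(P),\iota(N))\leq T+1$ and $d_X(g(z),\iota(y))=1$, so by \eqref{d_GH(M,x;N,y) le ...} we get $d_{GH}(P,z;N,y)\leq T+1$. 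Therefore $[P,z]\in B_{GH}(N,y;s)\subset E_s(N,y)$ by Lemma~\ref{l:B_{GH}(M,x;r)} for any $s>T+1$, which proves the lemma.
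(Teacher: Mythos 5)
Your overall strategy --- build $P$ by splicing a large ball of $M$ onto a copy of $N$ via a bridge, then witness both $U_{R,r}(M,x)$ and finite GH distance to $(N,y)$ with one ambient space --- is the right idea and is close in spirit to the paper's proof. However, the claim ``$B_P(z,R)=B_M(x,R)$ since $T>R$'' is false when $R>1$: for $v\in N\subset P$ one has $d_P(z,v)=1+d_N(y,v)$, so
\[
B_P(z,R)=B_M(x,R)\sqcup B_N(y,R-1)\;,
\]
and the second piece is nonempty as soon as $N$ has a point within distance $R-1$ of $y$. For such $w$, $d_X(g(w),f(M))=1+d_N(y,w)$, which can approach $R$; hence $H_{d_X,R}(f(M),f(x);g(P),g(z))$ is not $0$, and the deduction $[P,z]\in U_{R,r}(M,x)$ fails whenever $r\le1$. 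The defect is not an artifact of the particular ambient space you chose: with $M=N=\R$ and $x=y=0$, the ball $B_P(z,R)$ contains a genuine branch (a copy of $(-(R-1),R-1)\subset N$ attached at distance $1$ from $z$) that the interval $B_M(x,R)$ cannot approximate, so no admissible metric on $M\sqcup P$ makes $H_{d,R}(M,x;P,z)<r$ for small $r$ and moderately large $R$.

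The repair is simple: use a bridge of length $L\ge R$ instead of $1$. Then $d_P(z,v)\ge L\ge R$ for every $v\in N$, so $B_P(z,R)=B_M(x,R)$ as you intended; the Hausdorff term is genuinely $0$, and the rest of your argument yields $d_{GH}(P,z;N,y)\le T+L<\infty$, so any $s>T+L$ works. With that correction the proof is valid and takes a somewhat different route from the paper's: there, the auxiliary space is built as $\overline{B_M(x,R+2r+s_0)}\sqcup(N\setminus B_N(y,R+2r+s_0))$, glued along a near-optimal admissible metric on the compact balls of $M$ and $N$, so no spurious branch appears near the basepoint and no long bridge is needed; this yields a tighter control on $s$. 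Your construction, once fixed, is a bit more elementary at the cost of a cruder bound on $s$.
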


\begin{proof}
  Let \(A\) and \(B\) denote the balls of radius \(R+2r\) in \(M\) and \(N\)
  with centers \(x\) and \(y\), respectively. Let 
  \(s_0>d_{GH}(A,x;B,y)\) and let \(d\) be an admissible metric on \(A\sqcup
  B\) such that \(d(x,y)<s_0\) and \(H_d(A,B)<s_0\). Let \(d'\) be the
  admissible metric on \(M\sqcup N\) satisfying, for all \(u\in M\) and \(v\in N\),
    \[
      d'(u,v)=\inf\{\,d_M(u,u')+d(u',v')+d_N(v',v)\mid u'\in A,\ v'\in B\,\}.
    \]
  By the proof of Lemma~\ref{l:d'}, the metric  \(d'\) is proper, and its
  restriction to \(A\sqcup B\) equals \(d\); in particular, \(d'(x,y)<s_0\).
  
  Let \(A'\) and \(B'\) denote the balls of radius \(R+2r+s_0\) in \(M\)
  and \(N\) with centers \(x\) and \(y\), respectively. The set
  \(N'=\overline{A'}\sqcup(N\setminus B')\) is closed in \(M\sqcup N\),
  and therefore it becomes a proper metric space with the restriction
  of \(d'\). Take any
    \[
      s>\max\{s_0,R+2r+d'(x,N\setminus B')\}.
    \]
    If \(N\setminus B'\neq\emptyset\), then, for all \(v\in N\setminus
    B'\) and \(u\in\overline{A'}\),
    \[
      d'(u,v)\le d_M(u,x)+d'(x,v)<R+2r+d'(x,v),
    \]
  and so
    \[
      H_{d'}(\overline{A'},N\setminus B')\le R+2r+d'(x,N\setminus B')<s.
    \]
  It follows that \(H_{d'}(N,N')<s\), and so \(d_{GH}(N,y;N',x)<s\)
  by~\eqref{d_GH(M,x;N,y) le ...}. Therefore \([N',x]\in E_s(N,y)\) by
  Lemma~\ref{l:B_{GH}(M,x;r)}. Also, like in
  Claim~\ref{cl:B_M'(y,R'+2epsilon)=B_N(y,R'+2epsilon)}, 
  \(B_{N'}(x,R+2r)=A\), and therefore \([N',x]\in U_{R,r}(M,x)\) by
  Lemma~\ref{l:d'}.
\end{proof}

Hypothesis~\ref{h:turbulent}-(iii) is verified as follows. Let
\(R,r>0\) and \([M,x]\in\MM_*\). Let \(S>R\) and \(s>0\) be such that
\(s<r\) and \(R+2\max\{s,r-s\}<S\), and let \(\DD\) denote the set of points
\([N,y]\in\MM_*\) such that there is some admissible metric, \(d\), on
\(M\sqcup N\) so that \(d(x,y)<s\), \(H_{d,S}(M,x;N,y)<s\) and
\(H_d(M,N)<\infty\). 
  
\begin{lemma}\label{l:DD is dense}
  \(\DD\) is a dense subset of \(U_{S,s}(M,x)\cap E_{GH}(M,x)\).
\end{lemma}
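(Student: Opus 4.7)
The plan is, given $[N,y] \in U_{S,s}(M,x) \cap E_{GH}(M,x)$ and a basic neighborhood $U_{T,t}(N,y)$ (which we may assume has $T \ge S$), to produce $[N',y'] \in \DD \cap U_{T,t}(N,y)$ by truncating $N$ at a large radius and pasting in the complement of a large ball of $M$. The ``local'' piece of $N'$ near $y$ will enforce both the $U_{T,t}$-closeness to $[N,y]$ and the $U_{S,s}$-closeness to $[M,x]$, while the ``far $M$'' piece will force a finite global Hausdorff distance to $M$.

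Concretely, I would fix an admissible metric $d_1$ on $M \sqcup N$ witnessing $[N,y] \in U_{S,s}(M,x)$, choose $T' > \max\{T+s,\ S+2s\}$, and set
\[
N' = \overline{B_N(y,T')} \cup \{\, u \in M \mid d_M(x,u) \ge T'+s\,\},
\]
viewed as a subspace of $(M \sqcup N, d_1)$ with base point $y' = y$. Both pieces are closed in $M \sqcup N$, and since $d_1(x,y) < s$, any $d_1$-ball $B_{N'}(y,R)$ is contained in $B_N(y,R) \cup B_M(x,R+s)$, which has compact closure because $M$ and $N$ are proper; hence $(N', d_1|_{N'})$ is a proper metric space.

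I would then verify $[N',y'] \in U_{T,t}(N,y)$ by applying Lemma~\ref{l:U_R,r} to the inclusions $N, N' \hookrightarrow (M \sqcup N, d_1)$: points of $B_N(y,T) \subset \overline{B_N(y,T')}$ lie in $N'$, while any point in the $M$-part of $N'$ is at $d_1$-distance at least $(T'+s) - s = T' > T$ from $y$, so cannot lie in $B_{N'}(y,T)$; hence $H_{d_1,T}(N,y;N',y') = 0$. For $[N',y'] \in \DD$ I would use the $\epsilon$-padding construction from the proof of Lemma~\ref{l:U_R,r} to obtain an admissible metric $d^*$ on $M \sqcup N'$ with $d^*(p,q) = d_1(p,q) + \epsilon$ across factors: then $d^*(x,y') < s$ is immediate; the choice $T' \ge S+2s$ guarantees that for each $u \in B_M(x,S)$, a point $v \in N$ with $d_1(u,v) < s$ provided by $H_{d_1,S}(M,x;N,y) < s$ automatically lies in $\overline{B_N(y,T')} \subset N'$, and the same trap as above forces $B_{N'}(y,S) \subset \overline{B_N(y,T')} \subset N$, so $H_{d^*,S}(M,x;N',y') < s$. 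Finally $H_{d^*}(M,N') < \infty$ because the $M$-part of $N'$ is literally contained in $M$, its $N$-part is within $d_1$-distance $T'+s$ of $x$, and $M \setminus N' \subset B_M(x,T'+s)$ is within $d_1$-distance $T'+2s$ of $y$.

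The main obstacle is producing a single admissible metric on $M \sqcup N'$ witnessing all three $\DD$-conditions simultaneously (pointed distance, local Hausdorff at radius $S$, global Hausdorff); separate witnesses would not suffice. The $\epsilon$-padding of Lemma~\ref{l:U_R,r} handles this uniformly, and the key quantitative constraint is $T' \ge S+2s$, which prevents the truncation from spoiling the local Hausdorff bound.
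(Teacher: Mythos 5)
Your construction is correct, but it takes a genuinely different route from the paper's. The paper keeps the spliced-in tail isometric to a tail of $N$: it takes two isometric copies $(N_1,y_1)$, $(N_2,y_2)$ of $(N,y)$, one admissible metric $d_1$ witnessing the $U_{S,s}$-closeness to $(M,x)$ and a second metric $d_2$ witnessing $d_{GH}(M,x;N,y)<t'$ (this is where the hypothesis $[N,y]\in E_{GH}(M,x)$ enters), amalgamates them over $M$, and sets $N'=\overline{B_{N_1}(y_1,T')}\sqcup(N_2\setminus B_{N_2}(y_2,T'))$, checking $[N',y_1]\in U_{T,t}(N,y)$ via the isometric balls and Lemma~\ref{l:d'}. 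You instead splice in the far part of $M$ itself inside the single witness $(M\sqcup N,d_1)$, which makes the global bound $H_d(M,N')<\infty$ automatic, needs no second witness metric and no amalgamation, handles both the $U_{T,t}$- and the $\DD$-verifications with the $\epsilon$-padding of Lemma~\ref{l:U_R,r}, and in fact never uses $[N,y]\in E_{GH}(M,x)$, so it proves the slightly stronger statement that $\DD$ is dense in all of $U_{S,s}(M,x)$ (which of course implies density in the intersection). The only things left implicit in your write-up are routine: you must pick $\epsilon$ (and the choice of approximating points) with some slack below $s-\max\{d_1(x,y),H_{d_1,S}(M,x;N,y)\}$ to keep the strict inequalities, and the properness of $N'$ uses that the $M$-part of $N'$ is a closed subset of the complete space $M$, so bounded sequences in $N'$ subconverge in $N'$; both are easily supplied. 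What the paper's more elaborate splice buys is that $N'$ looks like $N$ both near and far, but that extra feature is not needed for this lemma, so your argument is a legitimate simplification.
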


\begin{proof}
  By its definition, the set \(\DD\subseteq U_{S,s}(M,x)\cap
  E_{GH}(M,x)\).  It must to be shown that, for every \(T,t,t'>0\) and
  \([N,y]\in U_{S,s}(M,x)\cap B_{GH}(M,x;t')\), the intersection
  \(U_{T,t}(N,y)\cap \DD\ne \emptyset\). Let \((N_1,y_1)\) and
  \((N_2,y_2)\) be two isometric copies of \((N,y)\). There are
  admissible metrics, \(d_1\) on \(M\sqcup N_1\) and \(d_2\) on
  \(M\sqcup N_2\), such that \(d_1(x,y_1)<s\),
  \(H_{d_1,R}(M,x;N_1,y_1)<s\), \(d_2(x,y_2)<t'\) and
  \(H_{d_2}(M,N_2)<t'\). Let \(\hat d\) denote the metric on \(M\sqcup
  N_1\sqcup N_2\) whose restrictions to \(M\sqcup N_1\) and \(M\sqcup
  N_2\) are \(d_1\) and \(d_2\), respectively, and such that, for all \(v_1\in N_1\) and \(v_2\in N_2\),
    \[
      \hat d(v_1,v_2)=\inf\{\,d_1(v_1,u)+d_2(u,v_2)\mid u\in M\,\}.
    \]
  Since \(d_2\) is proper by
  Lemma~\ref{l:d is proper}, and \(d_1\) can be assumed to be proper by
  Lemma~\ref{l:d'}, the metric \(\hat d\) is proper as well. Let
    \[
      T'=\max\{S,T\}+2\max\{s,t\}+t'+s,
    \]
  let \(A=B_M(x,T'+2t')\), \(B_1=B_{N_1}(y_1,T')\) and
  \(B_2=B_{N_2}(y_2,T')\), and set
  \(N'=\overline{B_1}\sqcup(N_2\setminus B_2)\). Since \(N'\) is closed in
  \(M\sqcup N_1\sqcup N_2\), it becomes a proper metric space with the
  restriction of \(\hat d\). We have \(\hat d(x,y_1)=d_1(x,y_1)<s\). With
  arguments used in Claims~\ref{cl:d_GH(N,y;P',z)<s}
  and~\ref{cl:B_M'(y,R'+2epsilon)=B_N(y,R'+2epsilon)}, we obtain \(H_{\hat
    d,R}(M,x;N',y_1)<s\) and
    \[
      H_{\hat d}(M,N')<\max\bigl\{H_{d_1}(A,B_1),t'\bigr\}<\infty.
    \]
   It follows that \([N',y_1]\) satisfies the condition to be in \(\DD\)
   with the restriction of \(\hat d\) to the subset \(M\sqcup N'\) of
   \(M\sqcup N_1\sqcup N_2\). Also, since \(\hat
   d(y_1,y_2)\le t'+s\), the proof of
   Claim~\ref{cl:B_M'(y,R'+2epsilon)=B_N(y,R'+2epsilon)} leads to
    \[
      B_{N'}(y_1,T+2t)=B_{N_1}(y_1,T+2t)\equiv B_N(y,T+2t),
    \]
  and therefore \([N',y_1]\in U_{T,t}(N,y)\) by Lemma~\ref{l:d'}.
\end{proof}

Let \(\EE\) be the set of those \([M,x]\in\DD\) such that \(M\) is
separated (in itself). It easily follows from Lemma~\ref{l:separated
  nets} that \(\EE\) is \(d_{GH}\)-dense in \(\DD\). Take any
\(\epsilon>0\) such that \(s+2\epsilon<r\) and
\(R+2\max\{s+\epsilon,r-s-\epsilon\}<S\).  Let \(A\) be a separated
\(\epsilon\)-net of \(M\) that contains \(x\), whose existence is
guaranteed by Lemma~\ref{l:separated nets}, and consider the
restriction of \(d_M\) to \(A\). Observe that \([A,x]\in
E_{r-s-\epsilon}(M,x)\) because \(r-s-\epsilon>\epsilon\). Then the
proof of Hypothesis~\ref{h:turbulent}-(iii) is completed by the
following lemma.

\begin{lemma}\label{l:EE}
  Any point of \(\EE\) can be joined to \([A,x]\) by a
  \(d_{GH}\)-continuous path in \(U_{R,r}(M,x)\).
\end{lemma}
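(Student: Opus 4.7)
Given $[N,y]\in\EE$, Lemma~\ref{l:d'} lets us fix an admissible \emph{proper} metric $d$ on $M\sqcup N$ with $d(x,y)<s$, $H_{d,S}(M,x;N,y)<s$, and $H_d(M,N)\le T<\infty$; recall also that $N$ is separated. My plan is to construct a $d_{GH}$-continuous family $\{[P_t,z_t]\}_{t\in[0,1]}$ joining $[N,y]$ to $[A,x]$, with each $[P_t,z_t]\in U_{R,r}(M,x)$.

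The construction proceeds in two stages. For $t\in[0,1/2]$ I would deform $N$ into a subset of $A$ as follows. Use properness of $d$ to choose a map $\phi\colon N\to A$ with $\phi(y)=x$ and $d(v,\phi(v))\le d(v,M)+\epsilon\le T+\epsilon$ for every $v\in N$ (possible because $A$ is a separated $\epsilon$-net of $M$). Let $P_t$ have underlying set $N$, with metric $d_t$ equal to the length metric on the space obtained from $M\sqcup N$ by attaching, for each $v\in N$, an arc from $v$ to $\phi(v)$ of length $(1-2t)\,d(v,\phi(v))$, then restricting to $N$. Then $d_0=d_N$, while at $t=1/2$ every $v$ is identified with $\phi(v)$, so $P_{1/2}$ realizes the pullback to $N$ of the metric of $\phi(N)\subset A$; set $z_t=y$ (identified with $x$ at $t=1/2$). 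For $t\in[1/2,1]$ I would enlarge $[\phi(N),x]$ to $[A,x]$ inside $M$: since $A\setminus\phi(N)$ sits at bounded $d_M$-distance from $\phi(N)$ (both are separated nets of a comparable region of $M$), attach each missing point $a\in A\setminus\phi(N)$ to a chosen nearby $\psi(a)\in\phi(N)$ by a segment of length $2(1-t)\,d_M(a,\psi(a))$, producing $P_t$; at $t=1$ all such segments have collapsed and $P_1\cong A$.

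To verify $[P_t,z_t]\in U_{R,r}(M,x)$, I would assemble an admissible proper metric on $M\sqcup P_t$ using the amalgamation just described (cf.\ the technique of Lemmas~\ref{l:U_R,r+s subset U_R,s circ U_R,r} and~\ref{l:E_r cap(E_s(M,x) times E_t(M,x))}) and then apply Lemma~\ref{l:U_R,r}. The hypotheses $s+2\epsilon<r$ and $R+2\max\{s+\epsilon,r-s-\epsilon\}<S$ are designed exactly so that the scale-$S$ comparison between $M$ and $N$ (with error $<s$) degrades, under the interpolation, to a scale-$R$ comparison between $M$ and $P_t$ with error $<r$, while base-point displacements and attached-arc lengths each contribute at most $\epsilon$ or $T+\epsilon$ outside the $R$-ball. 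Continuity of $t\mapsto[P_t,z_t]$ in $d_{GH}$ is clear because within each stage the underlying set is fixed and $d_t$ varies Lipschitz-ly in $t$, hence so does the admissible metric on $P_t\sqcup P_{t'}$ obtained by the same amalgamation; properness is maintained because $d$ and $d_M$ are proper and the added arcs form a locally finite family.

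The main obstacle is the second stage: choosing the auxiliary map $\psi\colon A\setminus\phi(N)\to\phi(N)$ and the associated attaching arcs in $M$ so that $P_t$ remains proper and stays in $U_{R,r}(M,x)$ uniformly in $t$. The natural device is a Hall-marriage–style local matching on balls of $M$, where both $\phi(N)$ and $A$ are finite by separatedness plus properness; once a matching with displacement uniformly bounded by some constant $C=C(T,\epsilon)$ is in hand, the sliding along short paths in $M$ gives the desired interpolation, and the bookkeeping to check the bounds is routine but technical.
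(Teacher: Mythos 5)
Your guiding idea (slide $N$ onto the net $A$ along attached segments) is the right one, but the construction as described has genuine gaps. First, shrinking the attaching arcs to length $(1-2t)\,d(v,\phi(v))$ and passing to the induced shortcut metric destroys admissibility: once an arc is strictly shorter than $d(v,\phi(v))$, routes of the form $u\to\phi(v)\to v\to u'$ can be shorter than $d_M(u,u')$, so the metric of the glued space no longer restricts to $d_M$ on $M$ (nor to $d$ on $M\sqcup N$); then $M$ does not embed isometrically and Lemma~\ref{l:U_R,r} cannot be applied to that space to certify $[P_t,z_t]\in U_{R,r}(M,x)$. For the same reason the claim that $P_{1/2}$ realizes the pullback of the metric of $\phi(N)\subset A$ is false in general: routes through $N$ survive, so $d_{1/2}(v,v')\le d_N(v,v')$, which can be strictly smaller than $d_M(\phi(v),\phi(v'))$; hence stage two does not start where stage one ends. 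Second, stage two does not terminate at $[A,x]$: with segments of length $2(1-t)\,d_M(a,\psi(a))$, at $t=1$ each $a\in A\setminus\phi(N)$ is identified with $\psi(a)$, so the limit space is $\phi(N)$, not $A$, and at intermediate $t$ the mutual distances of the hanging points are computed through $\psi(a)$, so no parameter value yields $A$ with its metric induced from $M$. (Incidentally, no Hall-type matching is needed for $\psi$ — any nearby point of $\phi(N)$ will do and injectivity is irrelevant — so the step you flag as the main obstacle is not the problem; the problem is the deferred quantitative work: with shrinking arcs an optimal route may use many arcs, so even the Lipschitz-in-$t$ estimate you invoke for $d_{GH}$-continuity is not the naive one.)

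Compare with the paper's proof, which is one-stage and symmetric, and avoids all of this. One takes $\Sigma\subset A\times N$ to be the set of \emph{all} pairs $(u,v)$ with $d(u,v)\le s_1+\epsilon$ (and $\le s_0+\epsilon$ when $u\in B_A(x,S)$ or $v\in B_N(y,S)$), attaches to each pair a segment $I_{u,v}$ of \emph{full} length $d(u,v)$, and defines $\hat d$ between points of different segments by passing through either the $A$-endpoints or the $N$-endpoints, exactly as in Lemma~\ref{l:U_R,r+s subset U_R,s circ U_R,r}; since the segments are not shortened, $\hat d$ restricts to the original metrics, and $P_t$ is the set of points at parameter $t$ on all segments. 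Then $P_0=A$ and $P_1=N$ isometrically (every point of $A$ and of $N$ lies in some pair, so no second stage is needed), the bounds $\hat d(z_t,z_{t'})\le(s_0+\epsilon)|t-t'|$ and $H_{\hat d}(P_t,P_{t'})\le(s_1+\epsilon)|t-t'|$ give $d_{GH}$-continuity, and $H_{\hat d,S}(A,x;P_t,z_t)\le s_0+\epsilon$ yields $[P_t,z_t]\in U_{S,s+\epsilon}(A,x)\subset U_{S,s+\epsilon}\circ E_{r-s-\epsilon}(M,x)\subset U_{R,r}(M,x)$ by Lemmas~\ref{l:U_R,r} and~\ref{l:U_S,r circ U_S,s subset U_R,r+s}; this is precisely where the numerical hypotheses on $s,\epsilon,S$ enter. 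If you want to keep your two-stage scheme, you would have to redefine the intermediate metrics so that each $P_t$ embeds isometrically together with $A$ (or $M$) in one ambient space — which in effect forces the full-length-segment device, and then the second stage becomes superfluous.
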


\begin{proof}
  For any \([N,y]\in\EE\), there is some admissible metric, \(d\), on
  \(M\sqcup N\) such that \(d(x,y)<s\), \(s_0:=H_{d,S}(M,x;N,y)<s\) and
  \(s_1:=H_d(M,N)<\infty\). Moreover \(d\) is proper by Lemma~\ref{l:d is
    proper}. Observe that \(H_{d,S}(A,x;N,y)<s_0+\epsilon\) and
  \(H_d(A,N)\le s_1+\epsilon\).
  
  Let \(\Sigma\) be the set of pairs \((u,v)\in A\times N\) such that
  \(d(u,v)\le s_1+\epsilon\) and, if \(u\in B_A(x,S)\) or \(v\in B_N(y,S)\),
  then \(d(u,v)\le s_0+\epsilon\); in particular, \((x,y)\in\Sigma\). Like
  in the proof of Lemma~\ref{l:U_R,r+s subset U_R,s circ U_R,r},
  define \(I_{u,v}\) and \(d_{u,v}\) for each \((u,v)\in\Sigma\), as well as
  \(h:\bigsqcup_{(u,v)\in\Sigma}\to A\sqcup N\),
    \[
      \widehat{P}=(A\sqcup N)\cup_h\bigsqcup_{(u,v)\in\Sigma}I_{u,v},
    \]
  and the metric \(\hat d\) on \(\widehat{P}\). Since \(d\) is proper and
  \(A\) and \(N\) are separated,  the \(d\)-balls in \(A\sqcup
  N\) are finite. Therefore, any ball in \(\widehat{P}\) is contained
  in a finite union of segments \(I_{u,v}\), and so
  \(\widehat{P}\) is proper.
  
  For each \(t\in I=[0,1]\), let \(P_t\subseteq\widehat{P}\) be the subset
  consisting of the points \(w\in I_{u,v}\) with
  \(d_{u,v}(w,u)=t\,d(u,v)\) for \((u,v)\in\Sigma\), and let \(z_t\) denote
  the unique point of \(P_t\cap I_{x,y}\).  Each \(P_t\) is a discrete
  subspace of \(\widehat{P}\), and it therefore becomes a proper metric
  space with the restriction of \(\hat d\). Moreover \((P_0,z_0)=(A,x)\)
  and \((P_1,z_1)=(N,y)\). For all \(t,t'\in I\), \((u,v)\in\Sigma\), \(w\in
  P_t\cap I_{u,v}\) and \(w'\in P_{t'}\cap I_{u,v}\),
    \begin{multline}\label{hat d(u,v) le ...}
      \hat d(w,w')=d_{u,v}(w,w')=d(u,v)\,|t-t'|\\
      \le
        \begin{cases}
          (s_1+\epsilon)\,|t-t'| & \text{for arbitrary \((u,v)\in\Sigma\)}\\
          (s_0+\epsilon)\,|t-t'| & \text{if \(u\in B_A(x,S)\) or \(v\in B_N(y,S)\)}.
        \end{cases}
    \end{multline}
  Thus \(\hat d(z_t,z_{t'})\le(s_0+\epsilon)\,|t-t'|\) and \(H_{\hat
    d}(P_t,P_{t'})\le(s_1+\epsilon)\,|t-t'|\). By~\eqref{d_GH(M,x;N,y)
    le ...}, it follows that \([P_t,z_t]\in E_{GH}(M,x)\) for all \(t\in
  I\), and the mapping \(t\mapsto[P_t,z_t]\) is \(d_{GH}\)-continuous.
  
  From~\eqref{hat d(u,v) le ...}, it also follows that \(\hat
  d(u,P_t)\le(s_0+\epsilon)t\) for all \(u\in B_A(x,S)\) and \(t\in
  I\). Moreover the ball \(B_{P_t}(z_t,S)\) is contained in the union of
  the segments \(I_{u,v}\) for \((u,v)\in\Sigma\) with \(u\in B_A(x,S)\) or
  \(v\in B_N(y,S)\). So \(\hat d(w,P_t)\le(s_0+\epsilon)t\) for all \(w\in
  B_P(z_t,S)\) by~\eqref{hat d(u,v) le ...}. It follows that
    \[
      H_{\hat d,S}(A,x;P_t,z_t)\le(s_0+\epsilon)t<s+\epsilon,
    \]
  obtaining
    \[
      [P_t,z_t]\in U_{S,s+\epsilon}(A,x)\subseteq U_{S,s+\epsilon}\circ E_{r-s-\epsilon}(M,x)\subseteq U_{R,r}(M,x)
      \]
  by Lemmas~\ref{l:U_R,r} and~\ref{l:U_S,r circ U_S,s subset U_R,r+s}.
\end{proof}
    
Hypotheses~\ref{h:U_R,r}-\ref{h:turbulent} have just been proved, and
that suffices to confirm Theorem~\ref{t: d_infty, d_GH and d_QI} for
\((d_{GH},E_{GH})\).

\begin{rem}
  As in Remark~\ref{r:B_infty(f,r) is not Polish}, it can be proved
  that, for all \(r>0\), \(\emptyset\) is residual in \(B_{GH}(M,x;r)\) if \(M\)
  is unbounded. In this case, for sequences \(0<r_n\uparrow r\) and
  \(0<R_n\uparrow\infty\), consider the sets \(U_n\) consisting of the
  points \([N,y]\in B_{GH}(M,x;r)\) such that
  \[
  H_d\left(M\setminus\overline{B_M(x,R_n)},N\setminus\overline{B_N(y,R_n)}\right)>r_n
  \]
  for every admissible metric, \(d\), on \(M\sqcup N\).
\end{rem}

\section{The QI metric relation}\label{s:QI}


Since \(d_{QI}\le d_{GH}\) and Theorem~\ref{t: d_infty, d_GH and d_QI}
is already proved for \(d_{GH}\), Remarks~\ref{r:generalized
  pseudo-metric relations} and~\ref{r:local equivalence equivalence
  classes} imply that the proof of Theorem~\ref{t: d_infty, d_GH and
  d_QI} for \(d_{QI}\) only requires the next proposition.

\begin{prop}\label{p:the fibers of E_QI are meager}
  The fibers of \(E_{QI}\) are meager in \(\MM_*\).
\end{prop}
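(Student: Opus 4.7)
The strategy is to emulate the proof of Proposition~\ref{p:d is turbulent} for $d_{QI}$: express each fiber $E_{QI}(M, x)$ as a countable union of nowhere dense subsets of $\MM_*$ and conclude meagerness.

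For each integer $n \ge 1$, set $E^{QI}_n(M, x) = \{[N, y] \in \MM_* : d_{QI}(M, x; N, y) \le n\}$, so that $E_{QI}(M, x) = \bigcup_{n \in \N} E^{QI}_n(M, x)$. Two structural inputs are needed: $E_{QI}$ has more than one equivalence class, and every $E_{QI}$-class is dense in $\MM_*$. The first holds because boundedness of a proper metric space is preserved by both $d_{GH}$ (a bounded proper metric space cannot be at finite pointed GH distance from an unbounded one) and $d_{\text{Lip}}$ (bi-Lipschitz images of bounded sets are bounded), hence also by $d_{QI}$; so bounded and unbounded proper pointed metric spaces lie in different $E_{QI}$-classes. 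The second holds because any $E_{QI}$-class contains an $E_{GH}$-class, which is dense in $\MM_*$ by minimality of $d_{GH}$ (Lemma~\ref{l:E_GH is minimal}).

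Granted these, the proof of Lemma~\ref{l:Int(E_r(x))=emptyset} transfers line by line: assuming $\Int(E^{QI}_n(M, x)) \neq \emptyset$, density of $E_{QI}(N, y) = \bigcup_s E^{QI}_s(N, y)$ for any $[N, y] \in \MM_*$ yields some $[P, p] \in E^{QI}_s(N, y) \cap E^{QI}_n(M, x)$, and the triangle inequality for $d_{QI}$ gives $d_{QI}(M, x; N, y) \le n + s < \infty$. Arbitrariness of $[N, y]$ forces $\MM_* = E_{QI}(M, x)$, contradicting the multiplicity of $E_{QI}$-classes. Hence each $E^{QI}_n(M, x)$ has empty interior.

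The remaining step, and main technical obstacle, is to promote empty interior to nowhere dense. For $d_{GH}$ this was achieved in Lemma~\ref{l:overline E_r(x) subset E_s(x)}; the analog required here is $\overline{E^{QI}_n(M, x)} \subset E^{QI}_s(M, x)$ for every $s > n$, equivalently lower semicontinuity of $d_{QI}$ on $\MM_* \times \MM_*$. Because $d_{QI}$ is characterized as the largest metric dominated by $\min(d_{GH}, d_{\text{Lip}})$, effectively an infimum over alternating chains of $d_{GH}$- and $d_{\text{Lip}}$-segments, this semicontinuity does not follow routinely and will require a careful analysis of almost-minimizing chains, exploiting the type~I structure of $d_{GH}$ (Theorem~\ref{t:d_GH}) and analogous continuity properties of $d_{\text{Lip}}$. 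Once established, each $E^{QI}_n(M, x)$ is closed with empty interior, hence nowhere dense in $\MM_*$, and the meagerness of $E_{QI}(M, x)$ follows by countable union.
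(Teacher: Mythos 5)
Your reduction to showing that each set $E^{QI}_n(M,x)=\{[N,y]\mid d_{QI}(M,x;N,y)\le n\}$ is nowhere dense is sound in outline, and the empty-interior part (via minimality, the triangle inequality for $d_{QI}$, and the existence of more than one class) is correct. But the proposal has a genuine gap exactly where you flag it: the promotion from empty interior to nowhere dense. You would need something like $\overline{E^{QI}_n(M,x)}\subset E^{QI}_s(M,x)$ for $s>n$, i.e.\ a closure control or lower semicontinuity of $d_{QI}$ in the Gromov topology, and you offer no argument for it. This is not a routine transfer of Lemma~\ref{l:overline E_r(x) subset E_s(x)}: that lemma rests on the presentation $E_r=\bigcap_{R>0}U_{R,r}$ and Hypothesis~\ref{h:U_R,r}-(v), whereas $d_{QI}$ is an infimum over alternating chains of $d_{GH}$- and $d_{\text{\rm Lip}}$-segments of unbounded length, for which no such uniform presentation is available; controlling limits of almost-minimizing chains of arbitrary combinatorial length is precisely the difficulty, and nothing in the type~I structure of $d_{GH}$ hands it to you. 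As it stands, the proof is incomplete at its main step.

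The paper avoids this issue entirely. It reduces to unbounded $M$ (the pointed compact spaces form a single $E_{QI}$-class, meager since it is an $E_{GH}$-class and $d_{GH}$ is turbulent), and then works directly with the closure $\overline{B_{QI}(M,x;r)}$, showing it has empty interior without any semicontinuity of $d_{QI}$. The tool is a concrete geometric invariant: non-emptiness of metric annuli $B(\cdot,q)\setminus\overline{B(\cdot,p)}$ around the base point. Lemma~\ref{l:[N,y] in U_R,r(M,x)<r}, Corollary~\ref{c:d_GH(M,x;N,y)<r} and Lemma~\ref{l:d_Lip(M,x;N,y)<r} show that this property propagates, with explicitly controlled distortion of the radii ($p\mapsto e^{-r}(p-2r)$, $q\mapsto e^{r}(q+2r)$, and a further $\pm2s$ for the $U_{S,s}$-approximation), along any finite GH/Lip chain of total length $<r$ and along Gromov-topology closeness. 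Consequently every $[N,y]\in\overline{B_{QI}(M,x;r)}$ must have non-empty annuli at a prescribed sequence of scales coming from the unboundedness of $M$; deleting one such annulus far from $y$ produces a proper space $[N',y]$ that agrees with $N$ on an arbitrarily large ball (hence lies in any prescribed $U_{S,s}(N,y)$) but violates the annulus condition, so $[N',y]\notin\overline{B_{QI}(M,x;r)}$. This yields nowhere density of $B_{QI}(M,x;r)$ by a direct construction, which is the ingredient your proposal leaves unproved; if you want to salvage your route, you should either prove the closure inclusion for $d_{QI}$ (not obviously true) or switch to an invariant-based argument of this kind.
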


The proof of Proposition~\ref{p:the fibers of E_QI are meager}
requires an analysis of \(d_{QI}\), which in turn requires an analysis
of \(d_{GH}\) and \(d_{\text{\rm Lip}}\). 

A map between metric spaces, \(\phi:M\to N\), is called
\emph{bi-Lipschitz} if there is some \(\lambda\ge1\) such that
  \[
    \frac{1}{\lambda}\,d_M(u,v)\le d_N(\phi(u),\phi(v))\le \lambda\,d_M(u,v)
  \]
for all \(u,v\in M\). The term \emph{\(\lambda\)-bi-Lipschitz} may be also used in this case. 

A (\emph{coarse}) \emph{quasi-isometry} of \(M\) to
\(N\) is a bi-Lipschitz bijection \(\phi:A\to B\) for nets \(A\subseteq
M\) and \(B\subseteq N\). The existence of a quasi-isometry of \(M\) to
\(N\) is equivalent to the existence of a finite sequence of metric
spaces \(M=M_0,\dots,M_{2k}=N\) such that
\(d_{GH}(M_{2i-2},M_{2i-1})<\infty\) and such that there is a bi-Lipschitz
bijection \(M_{2i-1}\to M_{2i}\) for each \(i\in\{1,\dots,k\}\). A
\textit{pointed} (\textit{coarse}) \textit{quasi-isometry} is defined in the
same way, by using a pointed bi-Lipschitz bijection between nets that
contain the distinguished points. The existence of a pointed
quasi-isometry has an analogous characterization involving pointed
Gromov-Hausdorff distances and pointed bi-Lipschitz bijections.

As noted in Section~\ref{s:Gromov sp}, \(d_{\text{\rm Lip}}\) is the
metric equivalence relation over \(\MM_*\) defined by setting
\(d_{\text{\rm Lip}}(M,x;N,y)\) equal to the infimum of the set of \(r\ge0\)
for which there is a pointed \(e^r\)-bi-Lipschitz bijection
\(\phi:(M,x)\to(N,y)\). 

The distance \(d_{QI}(M,x;N,y)\) equals the infimum of all sums
  \[
    \sum_{i=1}^kd_{GH}(M_{2i-2},x_{2i-2};M_{2i-1},x_{2i-1})+d_{\text{\rm Lip}}(M_{2i-1},x_{2i-1};M_{2i},x_{2i})
  \]
for finite sequences \([M,x]=[M_0,x_0],\dots,[M_{2k},x_{2k}]=[N,y]\) in
\(\MM_*\). For
\([M,x]\in\MM_*\) and \(r>0\), the notation \(B_{\text{\rm
    Lip}}(M,x;r)=B_{d_{\text{\rm Lip}}}([M,x],r)\) and
\(B_{QI}(M,x;r)=B_{d_{QI}}([M,x],r)\) will be used.

\begin{lemma}\label{l:[N,y] in U_R,r(M,x)<r}
  For \(r>0\) and \(R\ge q>p>2r\), if \([N,y]\in U_{R,r}(M,x)\) and
  \(B_M(x,q)\setminus\overline{B_M(x,p)}\ne\emptyset\), then
  \(B_N(y,q+2r)\setminus\overline{B_N(y,p-2r)}\ne\emptyset\).
\end{lemma}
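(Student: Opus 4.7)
The plan is to unwind the definition of $U_{R,r}$ and then do a double triangle-inequality estimate in the admissible metric. By definition of $U_{R,r}(M,x)$ there exists an admissible metric $d$ on $M\sqcup N$ with $d(x,y)<r$ and $H_{d,R}(M,x;N,y)<r$. Pick any $u\in B_M(x,q)\setminus\overline{B_M(x,p)}$. Then on the one hand $d_M(x,u)<q\le R$, and on the other hand $d_M(x,u)\ge p$, for otherwise $u$ would lie in $B_M(x,p)\subseteq\overline{B_M(x,p)}$.

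Since $u\in B_M(x,R)$, the condition $H_{d,R}(M,x;N,y)<r$ furnishes a point $v\in N$ with $d(u,v)<r$. I will then test membership of this $v$ in both balls around $y$. The triangle inequality for $d$ applied to the chain $y,x,u,v$ gives the upper bound
\[
d_N(y,v)=d(y,v)\le d(y,x)+d(x,u)+d(u,v)<r+q+r=q+2r,
\]
so $v\in B_N(y,q+2r)$. Applied in the reverse direction it gives the lower bound
\[
d_N(y,v)\ge d(x,u)-d(x,y)-d(u,v)>p-r-r=p-2r.
\]
The strictness on the right follows because each of $d(x,y)$ and $d(u,v)$ is strictly less than $r$; the strict inequality $d_N(y,v)>p-2r$ in turn means that a small $d_N$-ball around $v$ is disjoint from $B_N(y,p-2r)$, so $v\notin\overline{B_N(y,p-2r)}$. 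This produces the required point of $B_N(y,q+2r)\setminus\overline{B_N(y,p-2r)}$.

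There is no real obstacle here: the lemma is a mechanical transcription of the Hausdorff-type control on $M\sqcup N$ into a statement that separates a pair of concentric balls on the $N$-side. The only thing to watch is the bookkeeping of strict versus non-strict inequalities, which is what lets us pass from the metric estimate $d_N(y,v)>p-2r$ to the topological conclusion $v\notin\overline{B_N(y,p-2r)}$, and what guarantees $v\in B_N(y,q+2r)$ (open ball) rather than merely in the closed ball.
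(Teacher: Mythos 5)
Your proof is correct and follows essentially the same route as the paper's: extract an admissible metric $d$ with $d(x,y)<r$ and $H_{d,R}(M,x;N,y)<r$, take $u$ in the annulus of $M$, use $u\in B_M(x,R)$ to get $v\in N$ with $d(u,v)<r$, and apply the triangle inequality in both directions. Your handling of the boundary case (noting only $d_M(x,u)\ge p$ and recovering strictness of $d_N(y,v)>p-2r$ from $d(x,y)<r$, $d(u,v)<r$) is in fact slightly more careful than the paper's, which asserts $p<d(x,u)$ outright.
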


\begin{proof}
  By hypothesis, there is an admissible metric \(d\) on \(M\sqcup N\) such
  that \(d(x,y)<r\) and \(H_{d,R}(M,x;N,y)<r\), and there is \(u\in M\)
  such that \(p<d(x,u)<q\). Since \(H_{d,R}(M,x;N,y)<r\), there is 
  \(v\in N\) such that \(d(u,v)<r\). Then
    \[
      d_N(y,v)\le d(x,u)+d(y,x)+d(u,v)<q+2r,
    \]
  and, similarly, \(d_N(y,v)>p-2r\).
\end{proof}

\begin{cor}\label{c:d_GH(M,x;N,y)<r}
  If \(d_{GH}(M,x;N,y)<r\) and \(q>p>2r\) are such that
  \(B_M(x,q)\setminus\overline{B_M(x,p)}\ne\emptyset\), then
  \(B_N(y,q+2r)\setminus\overline{B_N(y,p-2r)}\ne\emptyset\).
\end{cor}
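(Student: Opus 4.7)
The plan is to deduce Corollary \ref{c:d_GH(M,x;N,y)<r} directly from Lemma \ref{l:[N,y] in U_R,r(M,x)<r}, using the fact that the metric relation $d_{GH}$ is nothing but the metric induced by the entourages $U_{R,r}$ (Corollary \ref{c:d_GH}). The bridge between the strict-inequality hypothesis on $d_{GH}$ and membership in some $U_{R,r}$ is provided by Lemma \ref{l:B_{GH}(M,x;r)}, which states that $B_{GH}(M,x;r)\subset E_r(M,x)$, together with the defining identity $E_r(M,x)=\bigcap_{R>0}U_{R,r}(M,x)$.

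Concretely, I would proceed as follows. First, from $d_{GH}(M,x;N,y)<r$ one has $[N,y]\in B_{GH}(M,x;r)$, and by Lemma \ref{l:B_{GH}(M,x;r)} this forces $[N,y]\in E_r(M,x)$, hence $[N,y]\in U_{R,r}(M,x)$ for every $R>0$. Now select $R=q$; the standing hypotheses $q>p>2r$ are exactly the inequalities $R\ge q>p>2r$ required to invoke Lemma \ref{l:[N,y] in U_R,r(M,x)<r}, and the nonemptiness condition $B_M(x,q)\setminus\overline{B_M(x,p)}\neq\emptyset$ is unchanged. The conclusion of that lemma, namely $B_N(y,q+2r)\setminus\overline{B_N(y,p-2r)}\neq\emptyset$, is precisely the desired conclusion of the corollary.

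I do not anticipate any real obstacle: the argument is essentially a single application of the already-proved lemma, after one line of translation between the $d_{GH}$-ball formulation and the $U_{R,r}$-entourage formulation. The one point that could have required extra care is the interaction between the strict inequality in $d_{GH}(M,x;N,y)<r$ and the strict inequality $r>0$ appearing in the hypotheses of Lemma \ref{l:[N,y] in U_R,r(M,x)<r}; but because Lemma \ref{l:B_{GH}(M,x;r)} lets us pass from $B_{GH}(M,x;r)$ directly into $E_r(M,x)$ with the same $r$, there is no need to perturb $r$ downward (which would otherwise have forced a reshuffling of the inequality $p>2r$). Hence the corollary really is a short, mechanical consequence of its preceding lemma.
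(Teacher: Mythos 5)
Your proof is correct and is essentially the paper's intended argument: the corollary is stated without proof because $d_{GH}(M,x;N,y)<r$ gives $[N,y]\in B_{GH}(M,x;r)\subset E_r(M,x)\subset U_{q,r}(M,x)$ (the first, ``obvious'' inclusion of Lemma~\ref{l:B_{GH}(M,x;r)}, which needs no perturbation of $r$), after which Lemma~\ref{l:[N,y] in U_R,r(M,x)<r} with $R=q$ applies verbatim.
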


\begin{lemma}\label{l:d_Lip(M,x;N,y)<r}
  If \(d_{\text{\rm Lip}}(M,x;N,y)<r\) and \(p>q>0\) are such that
  \(B_M(x,q)\setminus\overline{B_M(x,p)}\ne\emptyset\), then
  \(B_N(y,e^rq)\setminus\overline{B_N(y,e^{-r}p)}\ne\emptyset\).
\end{lemma}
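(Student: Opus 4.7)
The plan is to unwind the definition of $d_{\text{\rm Lip}}$, apply it to a witness point in the annulus $B_M(x,q)\setminus\overline{B_M(x,p)}$, and then transport it by the bi-Lipschitz bijection to find a witness point for the annulus in $N$.

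More concretely, since $d_{\text{\rm Lip}}(M,x;N,y)<r$, by the infimum definition given before the lemma there exist some $r'<r$ and a pointed bi-Lipschitz bijection $\phi:(M,x)\to(N,y)$ satisfying the dilation bound
\[
  e^{-r'}\,d_M(u,v)\le d_N(\phi(u),\phi(v))\le e^{r'}\,d_M(u,v)
\]
for all $u,v\in M$. Choose any $u\in B_M(x,q)\setminus\overline{B_M(x,p)}$, so that $p<d_M(x,u)<q$, and consider the image $v=\phi(u)\in N$. Since $\phi(x)=y$, applying the dilation bound to the pair $(x,u)$ gives
\[
  e^{-r'}p<e^{-r'}d_M(x,u)\le d_N(y,v)\le e^{r'}d_M(x,u)<e^{r'}q.
\]

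Because $r'<r$, one has $e^{r'}q<e^rq$ and $e^{-r'}p>e^{-r}p$, so $e^{-r}p<d_N(y,v)<e^rq$, which means that $v\in B_N(y,e^rq)\setminus\overline{B_N(y,e^{-r}p)}$, proving the claim. There is no real obstacle here: the only subtlety is keeping track of the strict inequality $r'<r$ coming from the infimum, which is exactly what allows the weak comparison $d_N(y,v)\le e^{r'}d_M(x,u)$ to upgrade to the strict bound $d_N(y,v)<e^rq$ needed for membership in the open ball (and likewise for the outer annulus boundary).
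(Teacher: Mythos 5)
Your proof is correct and takes essentially the same approach as the paper: transport a witness point from the annulus in $M$ through the bi-Lipschitz bijection and bound its distance from $y$. The only difference is that you unnecessarily track $r'<r$ to obtain strict inequalities; the paper observes that a $\phi$ satisfying the dilation bound for some $r'<r$ also satisfies it for $r$, and the strictness then comes directly from $p<d_M(x,u)<q$, so the extra bookkeeping can be dropped.
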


\begin{proof}
  By hypothesis, there is a pointed \(e^r\)-bi-Lipschitz bijection
  \(\phi:(M,x)\to(N,y)\), and there is
  \(u\in M\) such that \(p<d(x,u)<q\). Then
    \[
      d_N(y,\phi(u))\le e^rd_M(x,u)<e^rq,
    \]
  and, similarly, \(d_N(y,\phi(u))>e^{-r}p\), showing the result.
\end{proof}

\begin{proof}[Proof of Proposition~\ref{p:the fibers of E_QI are meager}]
  The pointed compact metric spaces form an equivalence class of \(E_{GH}\)
  which is meager in \(\MM_*\) by Theorem~\ref{t: d_infty, d_GH and d_QI}-(i) for \((d_{GH},E_{GH})\). Moreover any
  metric space bi-Lipschitz equivalent to a bounded one is also
  bounded. So the pointed compact metric spaces also form a class of
  \(E_{QI}\). Thus, to prove Proposition~\ref{p:the fibers of E_QI are
    meager}, it is enough to consider the fiber \(E_{QI}(M,y)\) for any
  unbounded proper metric space \(M\). Hence there are sequences
  \(p_n,q_n\uparrow\infty\) such that \(q_n>p_n>0\) and
  \(B_M(x,q_n)\setminus\overline{B_M(x,p_n)}\ne\emptyset\).
    
 \begin{claim}\label{cl:[N,y] in overline B_QI(M,x;r)}
   Let \(r,s>0\) and \(n\in\N\) so that \(p_n>2r\) and
   \(2s<e^{-r}(q_n-2r)\). If \([N,y]\in\overline{B_{QI}(M,x;r)}\),
   then
   \begin{equation}\label{[N,y] in overline B_QI(M,x;r)}
     B_N(y,e^r(q_n+2r)+2s)\setminus\overline{B_N(y,e^{-r}(p_n-2r)-2s)}\ne\emptyset.
   \end{equation}
 \end{claim}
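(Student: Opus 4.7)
\emph{Proof plan.} The plan is to reduce the claim to three ingredients: propagation of annuli under GH steps (Corollary~\ref{c:d_GH(M,x;N,y)<r}), propagation under Lip steps (Lemma~\ref{l:d_Lip(M,x;N,y)<r}), and the $U_{R,s}$-approximation (Lemma~\ref{l:[N,y] in U_R,r(M,x)<r}) used to transfer an annular conclusion from a nearby point of $B_{QI}(M,x;r)$ to $[N,y]$ itself.

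First I would fix $R>e^r(q_n+2r)$ and, using $[N,y]\in\overline{B_{QI}(M,x;r)}$ together with the symmetry of $U_{R,s}$, choose $[P,z]\in U_{R,s}(N,y)\cap B_{QI}(M,x;r)$. Unfolding the chain definition of $d_{QI}(M,x;P,z)<r$ produces a sequence $[M,x]=[M_0,x_0],\dots,[M_{2k},x_{2k}]=[P,z]$ with $a_i=d_{GH}(M_{2i-2},x_{2i-2};M_{2i-1},x_{2i-1})$ and $b_i=d_{\text{\rm Lip}}(M_{2i-1},x_{2i-1};M_{2i},x_{2i})$ satisfying $\alpha+\beta<r$, where $\alpha=\sum_i a_i$ and $\beta=\sum_i b_i$.

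Next I would propagate the nonempty annular set $B_M(x,q_n)\setminus\overline{B_M(x,p_n)}$ through the chain one step at a time: each GH step of length $a_i$ shifts the annulus additively by $\pm 2a_i$ via Corollary~\ref{c:d_GH(M,x;N,y)<r}, and each Lip step of length $b_i$ rescales it by $e^{\pm b_i}$ via Lemma~\ref{l:d_Lip(M,x;N,y)<r}. The hypothesis $p_n>2r$ together with $\alpha+\beta<r$ provides the slack needed so that the condition $p>2a_i$ holds at each GH step, making Corollary~\ref{c:d_GH(M,x;N,y)<r} applicable. Collecting the cumulative additive and multiplicative contributions and using $\alpha,\beta\le r$ and the monotonicity of both types of operation yields
\[
  B_P(z,e^r(q_n+2r))\setminus\overline{B_P(z,e^{-r}(p_n-2r))}\ne\emptyset.
\]
A final application of Lemma~\ref{l:[N,y] in U_R,r(M,x)<r} to $[N,y]\in U_{R,s}(P,z)$, with the annulus radii $q=e^r(q_n+2r)$ and $p=e^{-r}(p_n-2r)$ and with $s$ in the role of the lemma's $r$, transfers the annulus from $P$ to $N$ at a cost of $\pm 2s$, yielding~\eqref{[N,y] in overline B_QI(M,x;r)}; the required $R\ge q>p>2s$ holds whenever $e^{-r}(p_n-2r)>2s$, while if instead $e^{-r}(p_n-2r)\le 2s$ then $\overline{B_N(y,e^{-r}(p_n-2r)-2s)}=\emptyset$ and the conclusion reduces to the trivial fact $y\in B_N(y,e^r(q_n+2r)+2s)$, the hypothesis $2s<e^{-r}(q_n-2r)$ ensuring that the outer radius of the target annulus is nontrivially large in both cases.

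The main obstacle is the propagation step. One must carefully bookkeep the interaction between the additive GH shifts and multiplicative Lip dilations along a chain whose order of alternating GH/Lip steps is not under our control, in order to arrive at the clean bounds $e^{\pm r}(\cdot\pm 2r)$, while simultaneously verifying at each GH stage that the current inner radius still exceeds $2a_i$ so that Corollary~\ref{c:d_GH(M,x;N,y)<r} continues to apply---which is precisely where the hypothesis $p_n>2r$ is consumed.
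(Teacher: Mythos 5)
Your proposal follows essentially the same route as the paper's own proof: approximate $[N,y]$ by a chain endpoint $[M_{2k},x_{2k}]\in U_{S,s}(N,y)\cap B_{QI}(M,x;r)$ with $S>e^r(q_n+2r)$, propagate the annulus $B_M(x,q_n)\setminus\overline{B_M(x,p_n)}$ along the alternating GH/Lip chain by induction using Corollary~\ref{c:d_GH(M,x;N,y)<r} and Lemma~\ref{l:d_Lip(M,x;N,y)<r}, and transfer the resulting annulus in $M_{2k}$ back to $N$ via Lemma~\ref{l:[N,y] in U_R,r(M,x)<r}. Your explicit treatment of the degenerate case $e^{-r}(p_n-2r)\le 2s$, where the target inner ball is empty and the conclusion is trivial, is a small extra precaution the paper does not spell out, but it does not change the method.
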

    
    For the proof, let \(S>e^r(q_n+2r)\). Since
    \([N,y]\in\overline{B_{QI}(M,x;r)}\), there is a finite sequence,
    \([M,x]=[M_0,x_0],\dots,[M_{2k},x_{2k}]\) in \(\MM_*\) such that
    \([M_{2k},x_{2k}]\in U_{S,s}(N,y)\) and
    \[
      \sum_{i=1}^kd_{GH}(M_{2i-2},x_{2i-2};M_{2i-1},x_{2i-1})+d_{\text{\rm Lip}}(M_{2i-1},x_{2i-1};M_{2i},x_{2i})<r.
    \]
  Let \(r_1,\dots,r_{2k}>0\) be such that \(\sum_{j=1}^{2k}r_j<r\) and, for \(j\in\{1,\dots,2k\}\),
    \[
      r_j>
        \begin{cases}
          d_{GH}(M_{j-1},x_{j-1};M_j,x_j) & \text{if \(j\) is odd}\\
          d_{\text{\rm Lip}}(M_{j-1},x_{j-1};M_j,x_j) & \text{if \(j\) is even}.
        \end{cases}
    \]
  Let \(\bar r_j=\sum_{a=1}^jr_a\). Arguing by
  induction on \(j\), using Corollary~\ref{c:d_GH(M,x;N,y)<r} and
  Lemma~\ref{l:d_Lip(M,x;N,y)<r}, it follows that
    \[
    B_{M_j}(x_j,e^{\bar r_j}(q_n+2\bar
    r_j))\setminus\overline{B_{M_{2k}}(x_{2k},e^{-\bar r_j}(q_n-2\bar
      r_j))}\ne\emptyset
    \]
  for all \(j\). So
    \[
    B_{M_{2k}}(x_{2k},e^r(q_n+2r))\setminus\overline{B_{M_{2k}}(x_{2k},e^{-r}(q_n-2r))}\ne\emptyset.
    \]
  Then~\eqref{[N,y] in overline B_QI(M,x;r)} follows by
  Lemma~\ref{l:[N,y] in U_R,r(M,x)<r}, completing the proof of
  Claim~\ref{cl:[N,y] in overline B_QI(M,x;r)}.

    \begin{claim}\label{cl:B_QI(M,x;r) is nowhere dense}
      For each \(r>0\), \(B_{QI}(M,x;r)\) is nowhere dense in \(\MM_*\).
    \end{claim}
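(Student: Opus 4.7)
The plan is to leverage Claim~\ref{cl:[N,y] in overline B_QI(M,x;r)} as the sole obstruction: any point in $\overline{B_{QI}(M,x;r)}$ must carry matter in arbitrarily large annular shells around its basepoint, so it suffices to exhibit, in every open neighborhood $U_{S,s}(N,y)$ of every $[N,y]\in\MM_*$, a perturbation that is bounded and hence cannot satisfy this shell condition at sufficiently large scales. This will force $U_{S,s}(N,y)\not\subset\overline{B_{QI}(M,x;r)}$, which is exactly the nowhere-density of $B_{QI}(M,x;r)$.

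The perturbation is obtained by brute-force truncation. Given $[N,y]\in\MM_*$ and $S,s>0$, fix $T>S$ and set $N'=\overline{B_N(y,T)}$ with the restriction of $d_N$; since $N$ is proper, $N'$ is compact and, in particular, bounded. To verify $[N',y]\in U_{S,s}(N,y)$, apply Lemma~\ref{l:U_R,r} with ambient space $X=N$, the inclusion $N'\hookrightarrow N$ and the identity $\id_N\colon N\to N$ as the two isometric injections. Because $T>S$, one has $B_{N'}(y,S)=B_N(y,S)$, so every element of $B_{N'}(y,S)$ already lies in $N$ and every element of $B_N(y,S)$ already lies in $N'$; hence $H_{d_N,S}(N',y;N,y)=0<s$ and $d_N(y,y)=0<s$, as required.

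It remains to show $[N',y]\notin\overline{B_{QI}(M,x;r)}$. Suppose for contradiction that it is. Choose any $s'>0$ (thought of as small) and invoke Claim~\ref{cl:[N,y] in overline B_QI(M,x;r)} with this $s'$: for every $n\in\N$ with $p_n>2r$ and $2s'<e^{-r}(q_n-2r)$ one has
\[
B_{N'}(y,\,e^r(q_n+2r)+2s')\setminus\overline{B_{N'}(y,\,e^{-r}(p_n-2r)-2s')}\ne\emptyset.
\]
Since $p_n\uparrow\infty$, we may further require $n$ so large that $e^{-r}(p_n-2r)-2s'>T$. Then any point in the above set has distance strictly greater than $T$ from $y$, which is impossible because $N'\subseteq\overline{B_N(y,T)}$. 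This contradiction completes the argument, so $[N',y]\in U_{S,s}(N,y)\setminus\overline{B_{QI}(M,x;r)}$ and hence $B_{QI}(M,x;r)$ is nowhere dense in $\MM_*$.

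The argument is mostly bookkeeping once Claim~\ref{cl:[N,y] in overline B_QI(M,x;r)} is in hand; the only delicate point is the verification via Lemma~\ref{l:U_R,r} that the brutal truncation lies within $U_{S,s}(N,y)$, which is handled cleanly by working inside the ambient space $N$ itself rather than attempting to build an admissible metric on an abstract disjoint union. No further obstruction remains, since the shell condition of the claim is plainly incompatible with boundedness at sufficiently large radii.
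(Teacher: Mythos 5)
Your proof is correct and follows essentially the same route as the paper: both arguments exhibit, inside each basic neighborhood $U_{S,s}(N,y)$, a modification of $N$ that agrees with $N$ on $B_N(y,S)$ but violates the shell condition of Claim~\ref{cl:[N,y] in overline B_QI(M,x;r)}, hence lies outside $\overline{B_{QI}(M,x;r)}$. The only difference is the choice of perturbation: the paper deletes a single annulus far beyond radius $S$ (keeping the space unbounded), whereas you truncate to the compact ball $\overline{B_N(y,T)}$, which makes the incompatibility with the shell condition at large radii immediate and spares the annulus bookkeeping.
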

    
  Let \([N,y]\in\overline{B_{QI}(M,x;r)}\). Given \(S,s>0\), there is some
  \(n\in\N\) such that \(p_n>2r\) and
  \(S<e^{-r}(q_n-2r)-2s\). Thus~\eqref{[N,y] in overline B_QI(M,x;r)} is
  satisfied with these \([N,y]\), \(r\), \(s\) and \(n\). Let
    \[
      N'=N\setminus\left(B_N(y,e^r(q_n+2r)+2s)\setminus\overline{B_N(y,e^{-r}(q_n-2r)-2s)}\right).
    \]
  With the restriction of \(d_N\), \(N'\) is a proper metric
  space with \(B_{N'}(y,S)=B_N(y,S)\), obtaining \([N',y]\in
  U_{S,s}\). But \([N',y]\not\in\overline{B_{QI}(M,x;r)}\) by
  Claim~\ref{cl:[N,y] in overline B_QI(M,x;r)} because
    \[
      B_{N'}(y,e^r(q_n+2r)+2s)\setminus\overline{B_{N'}(y,e^{-r}(p_n-2r)-2s)}=\emptyset.
    \]
  So \(U_{S,s}(N,y)\not\subseteq\overline{B_{QI}(M,x;r)}\). Then
  Claim~\ref{cl:B_QI(M,x;r) is nowhere dense} follows since \(s\) can be
  chosen arbitrarily small, and \(S\) arbitrarily large by
  choosing \(n\) arbitrarily large.

  Since \(E_{QI}(M,x)=\bigcup_{r=1}^\infty B_{QI}(M,x;r)\),
  Claim~\ref{cl:B_QI(M,x;r) is nowhere dense} concludes the proof of
  Proposition~\ref{p:the fibers of E_QI are meager}.
\end{proof}

\bibliographystyle{amsplain}



\providecommand{\bysame}{\leavevmode\hbox to3em{\hrulefill}\thinspace}
\providecommand{\MR}{\relax\ifhmode\unskip\space\fi MR }
\providecommand{\MRhref}[2]{%
  \href{http://www.ams.org/mathscinet-getitem?mr=#1}{#2}
}
\providecommand{\href}[2]{#2}

\end{document}